\title[Unipotent morphisms]{Unipotent morphisms}
\date{January 22, 2025}
\numberwithin{Theorem}{section} 
\newcommand{\Orb}{\mathcal{O}}
\newcommand{\Vect}{\mathbf{V}}
\newcommand{\QCoh}{\mathbf{QCoh}}
\renewcommand{\GL}{\mathrm{GL}}
\renewcommand{\U}{\mathrm{U}}
\renewcommand{\B}{\mathrm{B}}
\newcommand{\Flag}{\mathbf{Flag}}
\newcommand{\FLAG}{\mathsf{Flag}}
\newcommand{\SCH}[1]{\mathbf{Sch}/{#1}}
\theoremstyle{plain}
\newtheorem*{theorem*}{Theorem}
\newtheorem{maintheorem}{Theorem}
\theoremstyle{remark}
\newtheorem{claim}{Claim}[Theorem]
\newtheorem*{claim*}{Claim}
\newcommand{\spref}[1]{\href{https://stacks.math.columbia.edu/tag/#1}{#1}}
\newcommand{\classB}{B}
\newcommand{\RDERF}{\mathbf{R}}
\DeclareMathOperator{\Frame}{Fr}
\setlist[enumerate]{font=\normalfont}
\author{Daniel Bragg}
\email{braggdan@berkeley.edu}
\address{University of Utah\\Department of Mathematics\\155 South 1400 East\\
Salt Lake City, UT 84112, USA}
\author{Jack Hall}
\email{jack.hall@unimelb.edu.au}
\address{School of Mathematics \& Statistics\\The University of Melbourne\\Parkville, VIC, 3010, Australia}
\author{Siddharth Mathur}
\email{siddharth.mathur@uc.cl}
\address{Departamento de Matem\'atica\\
Pontificia Universidad Católica de Chile\\
Santiago, Chile.}
\begin{document}
\maketitle
\begin{abstract}
  We introduce the theory of unipotent morphisms of algebraic stacks
  and prove a surprising local to global principle for a class of
  vector bundles. Two sample applications of our methods are the
  following:
  \begin{enumerate*}
  \item a unipotent analogue of Gabber's Theorem for torsion
    $\mathbf{G}_m$-gerbes and
  \item smooth Deligne--Mumford stacks with quasi-projective coarse
    spaces satisfy the resolution property in positive characteristic.
  \end{enumerate*}
  Our main tool is a descent result for flags, which we prove using
  results of Sch\"appi.
\end{abstract}

\section{Introduction}
In the early 1990s, Gabber answered a question of Grothendieck for
quasi-compact schemes admitting an ample line bundle: every
cohomological Brauer class on such a scheme is 
represented by an Azumaya algebra
\cite{MR611868,deJong_result-of-Gabber}. This result has since been reinterpreted in terms of the \emph{resolution property} for algebraic stacks (i.e., every quasi-coherent sheaf is a quotient of a direct sum of vector bundles) and gerbes \cite{MR1844577,MR2026412, MR2108211,2013arXiv1306.5418G,mathur2021resolution}.
In this language, Gabber's result can be restated as follows.
\begin{theorem*}[Gabber] \label{thm:gabber} Let $X$ be a quasi-compact
  scheme that admits an ample line bundle. Let $\ms X \to X$ be a
  $\mathbf{G}_m$-gerbe. If the cohomology class $[\ms X]\in\H^2(X,\mathbf{G}_m)$ of $\ms X$ is torsion, then $\ms X$ is a global quotient stack (see Definition \ref{def:globalquotient}) and has the resolution property.
\end{theorem*}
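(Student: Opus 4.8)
The plan is to reduce the theorem to the construction of a single vector bundle on $\ms X$ of weight $1$ for the $\mathbf{G}_m$-action defining the gerbe (equivalently, a finite locally free $[\ms X]$-twisted sheaf), from which the resolution property of $\ms X$ will follow together with that of $X$. Write $\pi\colon\ms X\to X$ and recall the weight decomposition $\QCoh(\ms X)=\bigoplus_{m\in\mathbf{Z}}\QCoh(\ms X)_{m}$, with $\QCoh(\ms X)_{0}=\pi^{*}\QCoh(X)$. A quasi-coherent sheaf of finite type is the direct sum of its finitely many nonzero weight components, so it is enough to resolve each one. If $E$ is a vector bundle of weight $1$ then $E^{\vee}$ has weight $-1$ and the evaluation map $E^{\vee}\otimes E\to\Orb_{\ms X}$ is surjective; for $\ms F$ of weight $m\ge 0$ the twist $\ms F\otimes(E^{\vee})^{\otimes m}$ has weight $0$, hence is $\pi^{*}\ms G$ for some finite type $\ms G$ on $X$, and since $X$ has an ample line bundle it has the resolution property, so a surjection $W\twoheadrightarrow\ms G$ from a vector bundle on $X$ gives, after pullback and composition with the evaluation map, a surjection $\pi^{*}W\otimes E^{\otimes m}\twoheadrightarrow\ms F$; the case $m<0$ is symmetric. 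Thus everything reduces to producing one weight-$1$ vector bundle on $\ms X$.

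To produce it I would follow the strategy of de~Jong's proof of Gabber's theorem. After a standard limit argument, reducing to $X$ of finite type over $\mathbf{Z}$ (the ample line bundle descending to such a model), one has a nonzero coherent weight-$1$ sheaf $\ms F_{0}$ on $\ms X$: push one forward from an \'etale neighbourhood that trivializes $\ms X$ and pass to a coherent subsheaf. At the generic points of $X$ the local rings are Artinian, where the gerbe is split by the classical theory of central simple algebras, so after an elementary modification of $\ms F_{0}$ near those points we may assume it is locally free there. One then runs a Noetherian induction on the closed locus where local freeness fails: at each step one twists $\ms F_{0}$ by a high power of the ample line bundle to make it globally generated and performs an elementary modification of the twisted sheaf --- using the torsion of $[\ms X]$ to keep the ranks of the modifications under control --- so that this locus strictly shrinks, terminating with a locally free twisted sheaf.

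The hard part will be this modification step: one must glue the local, untwisted normal forms of the twisted sheaf across the gerbe in a way that terminates, and this is where I expect the descent result for flags to enter. After base change along a morphism that splits $[\ms X]$, the twisted sheaf becomes an honest vector bundle equipped with a natural filtration by subbundles --- a flag --- whose graded pieces record the successive modifications; the descent theorem for flagged bundles then transports this data back down to $\ms X$, yielding the weight-$1$ vector bundle and, with the reduction of the first paragraph, the resolution property for $\ms X$.
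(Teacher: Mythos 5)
First, note that the paper does not actually prove this statement: it is quoted as Gabber's theorem, with the proof deferred to \cite{MR611868,deJong_result-of-Gabber}, and the paper's own contributions are unipotent analogues of it. Your first paragraph is a correct and standard reduction: given a single vector bundle $E$ of weight $1$ on $\ms X$, the set $\{\pi^*L^{\otimes k}\otimes E^{\otimes m},\ \pi^*L^{\otimes k}\otimes (E^{\vee})^{\otimes m}\}$ generates $\QCoh(\ms X)$, so the theorem is equivalent to producing one finite locally free twisted sheaf of positive rank. But that existence statement \emph{is} Gabber's theorem, and your proposal does not establish it. Your second paragraph is an outline of de~Jong's argument with every hard step deferred: you never say where the torsion hypothesis actually does work (it is what makes the determinant of a rank-$n$ twisted sheaf untwisted, which is what allows the elementary modifications to be glued and the induction to terminate), and ``performs an elementary modification \dots so that this locus strictly shrinks'' is precisely the content that requires proof.

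More seriously, the mechanism proposed in your third paragraph provably cannot work. The band $\mathbf{G}_m$ acts naturally on every quasi-coherent sheaf on $\ms X$ and preserves every subsheaf, so each graded piece of any filtration of a weight-$1$ sheaf again has weight $1$. Hence a nonzero weight-$1$ vector bundle never admits a trivially graded flag, nor a flag graded by pullbacks from $X$ (those all have weight $0$). Theorems \ref{MT:flagbasic} and \ref{T:descent-flags} therefore say nothing about twisted sheaves: they are engineered for unipotent bands, where the graded pieces can be arranged to be trivial, and the $\mathbf{G}_m$-gerbe situation is exactly the one they exclude. To prove Gabber's theorem you must follow the de~Jong/Lieblich route (coherent twisted sheaves, elementary modifications controlled by the order of the class) or Gabber's original argument; the flag-descent machinery of this paper is not a substitute for that step.
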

Using quasi-projective methods and Gabber's Theorem, Kresch and
Vistoli show that smooth, separated and generically tame
Deligne--Mumford stacks with quasi-projective coarse moduli spaces
have the resolution property \cite{MR2026412}. It has long been a challenge to produce non-trivial vector bundles without such hypotheses. 

In this paper, we introduce new methods to construct non-trivial
vector bundles on schemes, algebraic spaces and algebraic stacks. Our
key idea is to 
leverage the abundance of cohomology in unipotent settings. A sample application of our results is the following additive
analog of Gabber's result (also see Theorem \ref{T:DM-gerbes}).
\begin{maintheorem}\label{MT:ga-gerbes-res}
  Let $X$ be a quasi-compact algebraic stack with affine diagonal and
  the resolution property. If $\ms G \to X$ is a $\mathbf{G}_{a}$-gerbe,
  then $\ms G$ has the resolution property.
\end{maintheorem}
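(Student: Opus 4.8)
The plan is to reduce the statement about a $\mathbf{G}_a$-gerbe to a statement about vector bundles by exploiting the filtration structure that a unipotent band provides, together with the resolution property of the base $X$. First I would recall that the resolution property for $\ms G$ is equivalent to the assertion that every quasi-coherent sheaf of finite type is a quotient of a locally free sheaf of finite rank; since $X$ has affine diagonal, so does $\ms G$ (the gerbe map $\ms G \to X$ is affine, being a $\mathbf{G}_a$-gerbe), so we are in the setting where the resolution property can be checked via the existence of enough locally free sheaves. The $\mathbf{G}_a$-gerbe $\ms G \to X$ is classified by a class in $\H^2(X, \mathbf{G}_a)$, and the key geometric input is that $\mathbf{G}_a$ sits in no extensions that obstruct lifting: concretely, I would set up an auxiliary stack, namely the $\mathbf{G}_a$-torsor-like total space obtained from a $2$-term presentation, or better, use that $\mathbf{G}_a$ is a successive extension (here trivially, it is already abelian and one-dimensional) and that representations of the relevant Picard/gerbe-theoretic data on $\ms G$ decompose according to the $\mathbf{G}_a$-weight, which in characteristic $p$ is governed by the additive group's character theory.

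The heart of the argument should be a \emph{dévissage along the gerbe}. A quasi-coherent sheaf $\ms F$ on $\ms G$ decomposes, after pushing to $X$, into a quasi-coherent $\mathcal{O}_X$-module $\pi_* \ms F$ equipped with extra structure; because the band is $\mathbf{G}_a$ rather than $\mathbf{G}_m$, the decomposition is not a grading by $\mathbf{Z}$ but rather a filtration (this is exactly the ``unipotent'' phenomenon, and is where the flag-descent result alluded to in the abstract enters). Thus I would: (i) show that $\ms G$ admits a \emph{flag} structure — a filtration whose graded pieces are pulled back from $X$ — using the descent result for flags from Sch\"appi's work; (ii) on each graded piece, which lives on $X$, invoke the resolution property of $X$ to find a surjection from a vector bundle; (iii) assemble these surjections, using the filtration and the vanishing of the relevant $\mathrm{Ext}$-obstructions (which vanish because unipotent groups have no higher cohomology obstructing the assembly in the appropriate derived category $\DQCOH$), to produce a surjection onto $\ms F$ from a locally free sheaf on $\ms G$ of finite rank. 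Finiteness of rank is maintained because each step only involves finitely many graded pieces — this uses that $\ms F$ is of finite type and that the filtration is finite (or can be truncated) on the finite-type locus.

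The main obstacle I anticipate is step (i): producing the flag structure on $\ms G$, i.e.\ showing that the gerbe, though nontrivial, is ``unipotent'' in the precise sense that quasi-coherent sheaves on it are built from sheaves on $X$ by iterated extensions with no monodromy. This is precisely the technical core the paper advertises (``a descent result for flags, which we prove using results of Sch\"appi''), so I would expect the bulk of the work — and the real novelty — to be in establishing that $\mathbf{G}_a$-gerbes (and more generally unipotent-banded gerbes) have this flag-descent property, after which the resolution property follows by the relatively formal dévissage in steps (ii)–(iii). A secondary subtlety is bookkeeping in positive characteristic: the representation theory of $\mathbf{G}_a$ over a field of characteristic $p$ is not semisimple, but this is a feature rather than a bug here, since it is exactly the filtered (as opposed to graded) structure that we need, and it does not obstruct the cohomological assembly because $\mathbf{G}_a$ is still unipotent and hence its higher group cohomology with coefficients in quasi-coherent representations behaves well enough for the extension-gluing to go through.
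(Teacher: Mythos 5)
Your overall strategy---exploit unipotence of $\mathbf{G}_a$ to get a \emph{filtered} rather than graded structure fppf-locally, descend a flag to $\ms G$ using Sch\"appi-type results, and combine with the resolution property of $X$---is exactly the route the paper takes (a $\mathbf{G}_a$-gerbe is locally R-unipotent because $\mathbf{G}_a=\U_1\subseteq\GL_2$ with $\GL_2/\U_1$ quasi-affine; then Theorem \ref{T:flat-local-unipotent-morphism} applies). However, two of your intermediate assertions are wrong and one of them hides the actual difficulty. First, the gerbe map $\ms G\to X$ is not affine (it is not even representable); $\ms G$ has affine diagonal for a different reason, namely that the band $\mathbf{G}_a$ is affine and $X$ has affine diagonal. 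Second, and more seriously, your d\'evissage claim---that every quasi-coherent sheaf on $\ms G$ is an iterated extension of pullbacks from $X$---is false for a nontrivial gerbe and is not what the descent machinery provides. What is true locally is that a \emph{single} faithful representation of $\mathbf{G}_a$ (e.g.\ the rank-two unitriangular one) carries a trivially graded flag; but this filtration does not glue, precisely because the gerbe is nontrivial. The paper's Theorem \ref{T:descent-flags} does not descend the filtration itself: it produces a flag $W_\bullet$ on $\ms G$ together with a degree-preserving \emph{split surjection} $q^*W_\bullet\twoheadrightarrow V_\bullet$ onto the local flag, which is enough to make $W_\bullet$ R-faithful. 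The mechanism is not a general vanishing of $\Ext$-obstructions but the identification $\Ext^1(W,q_*q^*E)\simeq\varinjlim_\lambda\Ext^1(W,f^*F_\lambda\otimes E)$ coming from Sch\"appi's theorem that $p_*\Orb_{S'}$ is a filtered colimit of vector bundles, which lets one realize the relevant extension class at a finite stage.

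Finally, your steps (ii)--(iii) should be replaced by the Totaro--Gross characterization: once one R-faithful flag $W_\bullet$ (a tensor generator for $\ms G\to X$) has been produced, the resolution property of $\ms G$ follows from that of $X$ by Theorem \ref{thm:totgross}, with no further sheaf-by-sheaf assembly needed. So the skeleton of your argument is right, but as written the proof has a gap at its core step (the filtration does not descend, only a flag surjecting onto it does) and a gap in the conclusion (generation is obtained from a tensor generator, not from a filtration of an arbitrary sheaf).
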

In particular, this implies that $\mathbf{G}_a$-gerbes over smooth separated schemes have the resolution property. Note that the analogous multiplicative statement is still unknown, even in dimension three. We can also extend Kresch--Vistoli's result to the generically wild setting.
\begin{maintheorem}\label{MT:dm-stack-res}
  Let $\ms X$ be a smooth and separated Deligne--Mumford
  stack of finite type over a field $k$. Let $\pi \colon \ms X \to X$ be the associated
  coarse moduli space. If $X$ is
  quasi-projective over $\spec k$, then $\ms X$ has the resolution
  property.
\end{maintheorem}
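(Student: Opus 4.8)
The plan is to transfer the resolution property from the coarse space $X$ --- which, being quasi-projective over $\spec k$, carries an ample line bundle and hence has the resolution property --- up to $\ms X$, by realizing $\ms X$ in a stratified fashion as assembled from quasi-projective schemes along morphisms that preserve the resolution property: gerbes banded by linearly reductive finite group schemes, handled classically via Gabber's theorem, and gerbes banded by unipotent group schemes, handled by Theorem~\ref{MT:ga-gerbes-res} and its Deligne--Mumford companion Theorem~\ref{T:DM-gerbes}. As the resolution property and every hypothesis are insensitive to passing to connected components, I may assume $\ms X$ connected.

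The first step is to treat a single stratum. Stratifying $\ms X$ by the conjugacy class of the stabilizer produces locally closed substacks $\ms X_{(H)}$, each smooth (a fixed-point locus inside the smooth $\ms X$), with quasi-projective coarse space $X_{(H)}$ (locally closed in $X$), and, its inertia having constant rank, a gerbe over $X_{(H)}$ banded by a finite étale group scheme. To see that such a gerbe $\ms V \to Y$ --- with $Y$ quasi-projective, or, in the iteration, a stack with the resolution property and affine diagonal --- again has the resolution property, I would separate the band into a linearly reductive part and a unipotent part. In characteristic $0$ the band is linearly reductive and a composition series with abelian subquotients reduces matters to Gabber's theorem for torsion $\mathbf{G}_m$-gerbes over stacks with the resolution property and affine diagonal, as in \cite{mathur2021resolution}. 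In characteristic $p$, a transfer argument along a finite étale cover of degree prime to $p$ --- along which the resolution property descends, the degree being invertible --- reduces to the case that the band has $p$-power order; such a band, once its twisting is controlled, is unipotent, admitting a central series with subquotients $\mathbf{Z}/p \cong \ker(x \mapsto x^p - x \colon \mathbf{G}_a \to \mathbf{G}_a) \subset \mathbf{G}_a$, so that descending the series and pushing each $\mathbf{Z}/p$-layer out along Artin--Schreier to a $\mathbf{G}_a$-gerbe brings Theorem~\ref{MT:ga-gerbes-res} and Theorem~\ref{T:DM-gerbes} to bear. This establishes the resolution property for every stratum $\ms X_{(H)}$.

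It remains to reassemble the strata into $\ms X$. One cannot simply delete the closed complement of the open locus and recurse, since that complement is not smooth; instead I would build a single vector bundle on $\ms X$ whose fibre at each point is a faithful representation of the stabilizer there, starting from such a bundle on the union of the open strata and extending it inward across the boundary strata one at a time. Here smoothness of $\ms X$ is essential, the tame part of each boundary being routed through a suitable root stack and handled as above. Such a generating bundle then exhibits $\ms X$ as a quotient stack $[V/\mathrm{GL}_n]$ with $V$ quasi-affine, by the results of Totaro and Edidin--Hassett--Kresch--Vistoli, so $\ms X$ has the resolution property.

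I expect this reassembly to be the main obstacle, for two reasons. First, separating a gerbe by a finite band into linearly reductive and unipotent layers, and ensuring each unipotent layer remains genuinely unipotent --- twisted forms of unipotent groups need not be unipotent --- is cohomological rather than local on the base; this is exactly where the descent result for flags built on Sch\"appi's work, which already underlies Theorem~\ref{MT:ga-gerbes-res}, and the formalism of unipotent morphisms enter, along with the verification that ``gerbes of a given band preserve the resolution property'' is stable under the band extensions that arise. Second, and more seriously, extending a bundle with prescribed faithful stabilizer action across a \emph{wild} stratum boundary --- where root stacks are unavailable and the obstruction is once more unipotent --- is the genuine technical heart of the argument.
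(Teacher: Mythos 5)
Your stratum-level analysis is broadly in the spirit of the paper's Theorem \ref{T:DM-gerbes} (reduce a separated Deligne--Mumford gerbe to abelian constant bands, split into primary parts, treat the prime-to-$p$ part via Gabber and the $p$-part via unipotence), but your global architecture --- stratify $\ms X$ by stabilizer type and then reassemble --- leaves the decisive step unproved. You say so yourself: extending a vector bundle with prescribed faithful stabilizer action across the closure of a wild stratum is exactly the difficulty the paper is built to avoid, and neither your sketch nor any result in the paper supplies such an extension mechanism. The resolution property does not glue along stratifications, so as written the proposal does not prove the theorem.

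The paper's route sidesteps stratification entirely. One rigidifies $\pi$ (Olsson), factoring $\ms X \to \ms X^{\mathrm{rig}} \to X$ with $\ms X^{\mathrm{rig}}$ generically representable; then $\ms X^{\mathrm{rig}}$ is a global quotient and, by Kresch--Vistoli, admits a finite flat cover $Z \to \ms X^{\mathrm{rig}}$ by a quasi-projective \emph{scheme}. Since the resolution property descends along finite flat surjections, base changing along $Z$ reduces everything in one stroke to a single separated Deligne--Mumford gerbe over a quasi-projective scheme, where Theorem \ref{T:DM-gerbes} and Gabber's theorem finish. Two further points where your gerbe analysis would need repair: (i) for a non-abelian finite band one cannot simply ``separate a linearly reductive part from a unipotent part''; the paper instead trivializes the band via the $\underline{\mathrm{Out}}$-torsor and then passes to the stack of banded equivalences, a finite \'etale cover banded by the \emph{center}, which is where abelianness --- and hence the primary decomposition --- comes from; (ii) the wild part is handled not by pushing $\mathbf{Z}/p$-layers out along Artin--Schreier to $\mathbf{G}_a$-gerbes, but by observing that a finite \'etale group of $p$-power degree is smooth-locally a constant $p$-group, hence unipotent over a field, hence locally R-unipotent, so that the flag-descent machinery (Theorem \ref{T:flat-local-unipotent-morphism}) applies directly. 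Your Artin--Schreier pushout could likely be made to work for the abelian $p$-primary piece (the induced morphism of gerbes is affine), but it is an unnecessary detour.
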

We expect the key new technical input for the above results to be of
general interest: it states that one may descend certain vector
bundles along flat morphisms after locally taking a direct sum (see Theorem
\ref{T:descent-flags} for a more general statement).
\begin{maintheorem} \label{MT:flagbasic} Let $\ms X \to X$ be a
  morphism of quasi-compact and quasi-separated algebraic
  stacks. Suppose that $X$ has affine diagonal and satisfies the resolution property. If there
  is a faithfully flat morphism $X'\to X$, where $X'$ is affine, and a
  vector bundle $V'$ on $\ms X'= \ms X \times_X X'$ that admits a
  filtration
  \[
    0=V'_0 \subseteq V_1' \subseteq \dots \subseteq V_n'=V'
  \]
  with $V_{i}'/V_{i-1}'\simeq \Orb_{\ms X'}$ for all $i$ (i.e. $V'$ is \emph{trivially graded}), then there
  is a vector bundle $V$ on $\ms X$ and a split
  surjection $V_{\ms X'} \to V'$.
\end{maintheorem}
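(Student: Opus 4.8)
The statement is a special case of the more general descent result Theorem~\ref{T:descent-flags}; here is how I would attack it directly. Write $f\colon\ms X'\to\ms X$, $g\colon\ms X\to X$, $h\colon X'\to X$, $g'\colon\ms X'\to X'$ for the maps of the Cartesian square, so that $V_{\ms X'}=f^*V$, and argue by induction on the length $n$ of the flag. When $n=1$ we have $V'\simeq\Orb_{\ms X'}=f^*\Orb_{\ms X}$, so $V=\Orb_{\ms X}$ works. For the inductive step, let $0\to\Orb_{\ms X'}\to V'\to\bar V'\to 0$ be the bottom of the filtration, so $\bar V'=V'/V'_1$ is trivially graded of length $n-1$; by induction there is a vector bundle $\bar V$ on $\ms X$ and a split surjection $f^*\bar V\to\bar V'$. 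Pulling the extension $V'$ back along $f^*\bar V\to\bar V'$ and using that the latter is split, one sees it suffices to produce a vector bundle $V$ on $\ms X$ with a split surjection onto the extension $\hat V'$ of $f^*\bar V$ by $\Orb_{\ms X'}$ classified by the resulting class in $\mathrm{Ext}^1_{\ms X'}(f^*\bar V,\Orb_{\ms X'})=\H^1(\ms X',f^*(\bar V^\vee))$. Thus the whole problem reduces to: given a vector bundle $P$ on $\ms X$ and a class in $\H^1(\ms X',f^*P)$, presented as an extension $0\to\Orb_{\ms X'}\to E'\to f^*(P^\vee)\to 0$ on $\ms X'$, there is a vector bundle $V$ on $\ms X$ and a split surjection $f^*V\to E'$.

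To prove this I would push forward along $f$. After a harmless reduction — using the resolution property of $X$ — we may assume that $h$, hence $f$, is affine; then the sequence above pushes to an exact sequence $0\to\mathcal{B}\to f_*E'\to P^\vee\otimes\mathcal{B}\to 0$ on $\ms X$, where $\mathcal{B}:=f_*\Orb_{\ms X'}$ and we have used the projection formula. Pulling this back along the unit $P^\vee=P^\vee\otimes\Orb_{\ms X}\hookrightarrow P^\vee\otimes\mathcal{B}$ gives a class $\delta\in\mathrm{Ext}^1_{\ms X}(P^\vee,\mathcal{B})$ whose pullback to $\ms X'$, composed with the counit $f^*\mathcal{B}\to\Orb_{\ms X'}$, recovers the original class. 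Now $\mathcal{B}=g^*(h_*\Orb_{X'})$ is a flat quasi-coherent sheaf on $\ms X$ pulled back from $X$, and this is where the resolution property of $X$ and Schäppi's results enter: by a Lazard-type theorem over stacks with the resolution property, $h_*\Orb_{X'}$, hence $\mathcal{B}$, is a filtered colimit of vector bundles, which we may arrange so that the unit factors through the first term. Since $\ms X$ is quasi-compact and quasi-separated, $\mathrm{Ext}^1_{\ms X}(P^\vee,-)\simeq\H^1(\ms X,P\otimes-)$ commutes with filtered colimits, so $\delta$ comes from a class $\delta_i\in\mathrm{Ext}^1_{\ms X}(P^\vee,E_i)$ for some vector bundle $E_i$ on $\ms X$ occurring in the colimit. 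Taking $V$ to be the corresponding extension of $P^\vee$ by $E_i$ on $\ms X$, the triangle identities show that $f^*E_i\to f^*\mathcal{B}\to\Orb_{\ms X'}$ retracts the pulled-back unit $\Orb_{\ms X'}\hookrightarrow f^*E_i$, and pushing $f^*V$ out along this retraction yields $E'$ — producing a surjection $f^*V\to E'$ whose kernel is a vector bundle on $\ms X'$.

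The main obstacle is making this surjection \emph{split}. The class $\delta$ — equivalently, the extension $E'$ — need not descend to $\ms X$, and $\ms X$ itself is not assumed to satisfy the resolution property, so there is no room simply to correct a splitting after the fact. Overcoming this is precisely the content of Theorem~\ref{T:descent-flags}: one must choose the vector bundle $E_i$ above, together with the auxiliary summands, far more carefully, controlling the pullback $f^*E_i$ simultaneously with the descent of $\delta$, so that the kernel of $f^*V\to E'$ splits off; Schäppi's characterisation of categories of quasi-coherent sheaves on stacks with the resolution property is what lets one recognise the required vector bundle on $\ms X$ rather than build it by hand. I would also organise the induction so that at each stage one descends the entire flag datum at once — a morphism to the classifying stack of unipotent upper-triangular matrices — rather than one extension class at a time; this is what keeps the bookkeeping of the successive splittings uniform, and it is the reason the general statement is phrased in terms of flags.
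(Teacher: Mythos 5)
Your construction is, step for step, the one the paper uses to prove Theorem~\ref{T:descent-flags} (of which Theorem~\ref{MT:flagbasic} is the trivially graded special case): induct on the length, quotient by the bottom piece of the filtration, pull the extension back along the split surjection supplied by induction, push forward along the affine morphism $f$, identify $f_*\Orb_{\ms X'}$ with $g^*h_*\Orb_{X'}$ via flat base change and the projection formula, invoke Sch\"appi's theorem (Theorem~\ref{T:schappi}) to write $h_*\Orb_{X'}=\varinjlim_\lambda F_\lambda$ with the $F_\lambda$ vector bundles on $X$, and use that $\mathrm{Ext}^1_{\ms X}(P^\vee,-)$ commutes with filtered colimits to descend the class $\delta$ to some $\delta_\lambda\in\mathrm{Ext}^1_{\ms X}(P^\vee,g^*F_\lambda)$. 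Up to the final step you have reconstructed the paper's argument exactly, in the same order.

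The one genuine gap is the one you flag yourself: the splitting of $f^*V\to E'$. You suggest that closing it requires a ``far more careful'' choice of $E_i$ together with auxiliary summands; in fact the paper's only additional input is that the counit $h^*h_*\Orb_{X'}\to\Orb_{X'}$ is split surjective (a section is $h^*$ of the unit, by the triangle identity), so that for $\lambda$ large the induced map $h^*F_\lambda\to\Orb_{X'}$ is already split surjective, $\Orb_{X'}$ being compact since $X'$ is affine --- this is precisely your remark that the unit factors through a finite stage of the colimit, so you have the needed ingredient in hand without recognizing it as such. With that arranged, $E'$ is the pushout of $f^*V$ along the split surjection $f^*g^*F_\lambda\twoheadrightarrow\Orb_{\ms X'}$, and the paper asserts that the resulting surjection $f^*V\to E'$ is split. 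Be aware that this last assertion is itself not formal: pushing an extension out along a split surjection of its subobject does not in general yield a split surjection on total spaces. The obstruction lies in $\mathrm{Ext}^1_{\ms X'}(E',K)$, where $K$ is the kernel of $f^*g^*F_\lambda\to\Orb_{\ms X'}$ tensored with the bottom graded piece, and its vanishing uses that the class of $f^*V$ is pulled back from $\ms X$ and maps to the class of $E'$ under the counit; so the crux you identified is real, but it is resolved by this verification rather than by re-engineering the choice of $E_i$. Finally, your suggestion to descend the entire flag datum at once as a morphism to $\classB\U(\mathscr{L})$ is not how the paper proceeds here: Theorem~\ref{T:descent-flags} descends one extension at a time, and the $\classB\U(\mathscr{L})$ picture enters only afterwards, in Theorem~\ref{T:char-Runi}.
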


In fact, our results apply in the broader setting of \emph{unipotent
  morphisms}, a notion that we develop and characterize in
\S\ref{sec:unipotent-morphisms-groups}. Roughly speaking, the theory
is modeled on gerbes with unipotent stabilizers and algebraic stacks
of the form $[Z/\U_n]$, where $Z$ is an algebraic space and $\U_n$ is
the unipotent subgroup of $\GL_n$ consisting of unitriangular matrices. Compare with \cite{MR1844577,MR2108211,2013arXiv1306.5418G}, where quotients of the form $[Z/\GL_n]$ and the resolution property are considered. In fact, we present a unipotent enrichment of their results (see Theorem \ref{T:char-Runi}) and, better still, we prove a striking descent statement. Indeed, Theorem \ref{MT:flagbasic} reveals the following local to global principle: \emph{a locally unipotent morphism over a base with enough flags is globally unipotent} (see Theorem \ref{T:flat-local-unipotent-morphism}). 

These notions are closely related to many foundational questions of
independent interest. For example, over a field $k$ a \emph{unipotent
  group scheme} is a closed subgroup of $\U_{n,k}$ (see
\cite[Thm.~XVII.3.5]{SGA3-III-NEW} and
Remark \ref{rem:overafieldallagree} for several other
characterizations). It is natural to ask: to what extent is this true
over a general base?

\begin{Question} \label{Q1} Let $G \to S$ be a group scheme with
  unipotent geometric fibers. Is there an embedding $G \to \U_{n,S}$ for
  some $n \geq 1$?  Is this true locally on $S$?
\end{Question} 

As a consequence of our methods, if $S$ is a regular separated scheme or admits an ample line bundle,
then the existence of a flat-local embedding in $\U_n$ implies the existence of a Zariski-local embedding of $G$ in $\U_n$ (see Corollary \ref{C:flat-local-strong-unipotent} for a more general statement). In Section \ref{ss:geounivslocal}, we refine Question \ref{Q1} and explain some  cases where the answer is positive (see Examples \ref{E:char0-affine} and \ref{E:examples-r-unipotence}) and one case, in positive characteristic, where it is negative (Example \ref{E:non-separated-unipotent}(1)). One may also
compare Question \ref{Q1} with a question of Conrad on when
smooth affine group schemes can be embedded into $\GL_n$
\cite{mathoverflow_groups-over-dual-numbers}.

Unipotent morphisms are closely related to the notion
of a faithful moduli space, due to Alper \cite{Jarod-faithful}. This is a morphism of algebraic
stacks $f\colon  \ms X \to X$ such that
\begin{enumerate}
\item the natural map $\mathcal{O}_X \to f_*\mathcal{O}_{\ms X}$ is an isomorphism (i.e. $f$ is Stein); and 
\item if $F$ is quasi-coherent on $\ms X$ and $f_*F=0$, then $F=0$.
\end{enumerate}
Faithful moduli spaces should help with the analysis of moduli stacks
with only unipotent stabilizers (compare with good or adequate moduli
spaces in \cite{2008arXiv0804.2242A,MR3272912}) that appear in the
unstable locus of GIT quotients. In contrast to good moduli spaces,
however, faithful moduli spaces are rarely compatible with base
change. This makes them difficult to reduce to simple cases where we
can apply deformation theory. Faithful moduli spaces are also stable
under composition.

We show that faithful moduli spaces and locally unipotent morphisms
have unipotent stabilizers (Propositions
\ref{P:stabilizer-geom-unipotent} and \ref{P:res-gerb-faithful}) and
that faithful moduli spaces are generically gerbes for a unipotent
group (Proposition \ref{P:faithful-generic-gerbe}). We also show that
certain Stein unipotent morphisms are faithful moduli spaces
(Proposition \ref{prop:pushforward is faithful on objects}). 

\subsection*{Acknowledgments} We would like to thank Jarod Alper, Brian Conrad, Andrea Di Lorenzo, Aise Johan de Jong, Andrew Kresch, Nikolas Kuhn, Max Lieblich, Martin Olsson, Stefan Schr\"oer, Tuomas Tajakka, and Angelo Vistoli for helpful comments. We are especially thankful to David Rydh for carefully reading an earlier draft and providing many invaluable suggestions. We would also like to thank the anonymous referees for a number of helpful suggestions. The first author was supported by the National Science Foundation under
NSF Postdoctoral Research Fellowship DMS-1902875. The second author was partially supported by the Australian Research Council DP210103397 and FT210100405. The third author conducted part of this research in the framework of the research training group GRK 2240: Algebro-geometric Methods in Algebra, Arithmetic and Topology, which is funded by the Deutsche Forschungsgemeinschaft. The third author was also supported by the Swedish Research Council under grant no. 2016-06596 while the author was in residence at Institut Mittag-Leffler in Djursholm, Sweden during the Fall of 2021. The third author was also supported
by FONDECYT Regular grant No. 1230402.

\section{Vector bundles, global quotients, and the resolution property}
We briefly recall a circle of ideas from
\cite{MR1844577,MR2108211,2013arXiv1306.5418G}. We also make some
minor refinements, which will be useful in the present article. To agree with the conventions of \cite[\S 5]{2013arXiv1306.5418G}, a \emph{vector bundle} on $X$ will denote a locally free $\Orb_X$-module of finite constant rank.

Let $X$ be an algebraic stack and let $n$ be a non-negative
integer. If $V$ is a vector bundle on $X$ of rank $n$, there is an
associated \emph{frame bundle}
$\Frame(V) = \underline{\Isom}_X(\Orb_X^{\oplus n},V)$.  Then
$\Frame(V) \to X$ is a $\GL_n$-torsor and sits in the following
$2$-cartesian diagram:
\[
  \xymatrix{\Frame(V) \ar[r] \ar[d] & \spec \Z\ar[d] \\ X \ar[r] & \classB \GL_n. }
\]
It is well-known that this association induces an equivalence of
categories between vector bundles on $X$ of rank $n$ and
$\GL_n$-torsors. 
\begin{Definition} \label{def:globalquotient}
  Let $X$ be an algebraic stack. We say that $X$ is a \emph{global
    quotient} if $X \cong [Z/\GL_n]$, where $Z$ is a quasi-compact and quasi-separated algebraic space
  with an action of $\GL_n$ for some non-negative integer $n$. If $V$
  denotes the vector bundle of rank $n$ associated to the
  $\GL_n$-torsor $Z \to X$, we say that $V$ is \emph{faithful}. A
  morphism of algebraic stacks $f\colon X \to S$ is a \emph{global
    quotient} if there is a vector bundle $V$ of rank $n$ on $X$ with
   frame bundle that is quasi-compact, quasi-separated, and representable over $S$. In this case, we say that $V$
  is \emph{$f$-faithful}.
\end{Definition}
\begin{remark}
  A vector bundle $V$ on $X$ is easily seen to be $f$-faithful for
  $f\colon X \to S$ if and only if the relative stabilizer groups of
  $f$ act faithfully on $V$ at every geometric point of $X$. In particular, the relative stabilizer groups are affine \cite[Cor.~VI${}_{\mathrm{B}}$.1.4.2]{SGA3-I-NEW}. Moreover, if $X \xrightarrow{f}S \xrightarrow{g} T$ are quasi-compact and quasi-separated morphisms of algebraic stacks and $V$ is $(g\circ f)$-faithful, then
  $V$ is $f$-faithful. Also, $X$ is a global quotient if and only if
  the morphism $X \to \spec \Z$ is a global quotient.
\end{remark}
\begin{remark} \label{rem:vbfiniterank}
  It is natural to ask if algebraic stacks such as
  $X_1=\coprod_{n\geq 1} \classB \GL_{n,\Z}$ or $X_2=\classB \underline{\Z}$, where $\underline{\Z}$ denotes the constant group scheme associated to the integers, should be considered global
  quotients. There is certainly a locally free sheaf on $X_1$ with a faithful
  action, but it has non-constant and unbounded rank; also, $X_2$ admits a vector bundle of rank $2$ with a faithful action whose frame is quasi-compact but not quasi-separated. We exclude these as we are unable to shed much light on them and also to adhere to the conventions of \cite[\S 5]{2013arXiv1306.5418G}.
\end{remark}
Let $\mathcal{A}$ be an abelian category.  Let $\Lambda$ be a set of
objects of $\mathcal{A}$. Recall that $\Lambda$ \emph{generates}
$\mathcal{A}$ (or $\Lambda$ is \emph{generating}) if the functor
\[
  \mathsf{H}_\Lambda \colon \mathcal{A} \to \mathbf{Ab} \colon x\mapsto \prod_{a \in \Lambda} \Hom_{\mathcal{A}}(a,x).
\]
is faithful. If $\mathcal{A}$ is closed under small coproducts, then
$\Lambda$ is generating if and only if every $x\in \mathcal{A}$ admits
a presentation of the form
$\bigoplus_{a\in \Lambda} (a^{\oplus I_a}) \twoheadrightarrow x$.
\begin{remark}\label{R:gens-proj-zref}
  Let $\mathcal{A}$ be an abelian category and $\Lambda$ a set of
  objects of $\mathcal{A}$.  Then $\mathsf{H}_\Lambda$ is
  \emph{conservative} (i.e., if $f \colon x \to x'$ in $\mathcal{A}$
  and $\mathsf{H}_\Lambda(f)$ is bijective, then $f$ is an isomorphism) if and
  only if $\Lambda$ generates $\mathcal{A}$. Note that while
  $\mathsf{H}_\Lambda$ conservative implies that it is
  \emph{zero-reflecting} (i.e., $\mathsf{H}_\Lambda(x) = 0$ implies
  that $x = 0$), the converse does not hold in general---it does when
  the $a \in \Lambda$ are all $\mathcal{A}$-projective. For a
  counterexample, consider the affine line with the doubled origin; then
  $\mathsf{H}_{\{\Orb\}}$ is zero-reflecting but not conservative.
\end{remark}

\begin{Example}\label{E:gens-stack}
  If $X$ is an algebraic stack, then the abelian category $\QCoh(X)$
  always admits a set of generators \cite[Tag
 \spref{0781}]{stacks-project}. If $X$ is quasi-compact and quasi-separated, then 
 a set of generators can always be found amongst the $\Orb_X$-modules of 
 finite presentation \cite{rydh_noetherian-general}. Note this result is much simpler
 if $X$ is noetherian \cite[Prop.~15.4]{LMB} or has quasi-finite and separated diagonal \cite{rydh-2009}.
\end{Example} 
The \emph{resolution property}, which we refine and recall below, is about algebraic stacks that have a generating set of vector bundles.
\begin{Definition} \label{D:res-prop} Let $X$ be an algebraic
	stack. Let $\Vect$ be a set of vector bundles on
	$X$. We say that $X$ has the \emph{$\Vect$-resolution property}
	if it is quasi-compact, quasi-separated, and $\Vect$ generates
	$\QCoh(X)$. Following \cite[Def. 5.1]{2013arXiv1306.5418G}, we say that $X$ has the \emph{resolution
		property} if it has the $\Vect$-resolution property for some set $\Vect$.
\end{Definition}

\begin{remark} \label{rem:noetherianRP} Fix a noetherian algebraic stack $X$ and let $\Vect$ denote the set of isomorphism classes of vector bundles on $X$. Then $X$ has the $\Vect$-resolution property if and only if every coherent sheaf on $X$ is the quotient of a locally free sheaf of finite rank (see \cite[Rem. 5.2]{2013arXiv1306.5418G}). \end{remark}

Just as in \cite{2013arXiv1306.5418G}, we will be interested in a relative version of the resolution property, so we recall the following definition.
\begin{Definition}(\cite[Def.\ 2.7]{2013arXiv1306.5418G})
	Let $f \colon X \to S$ be a morphism of algebraic stacks. Let $\mathbf{V}$ be a set of finitely presented quasi-coherent sheaves on $X$. We say that $\mathbf{V}$ is \emph{$f$-generating} if $f$ is quasi-compact and quasi-separated and there is a set of (not necessarily finitely presented) quasi-coherent $\Orb_S$-modules $\mathbf{W}$ such that $\Vect \tensor_{\Orb_X} f^*\mathbf{W}=\{V \tensor_{\Orb_X} f^*W \,:\, V \in \Vect,\, W \in \mathbf{W}\}$ generates $\QCoh(X)$. We say that $\mathbf{V}$ is \emph{universally} $f$-generating if for every morphism of algebraic stacks $s \colon S' \to S$, the set $s'^*\Vect=\{s'^*V\colon V \in \mathbf{V}\}$, where $s' \colon X'=X\times_S S' \to X$ and $f' \colon X' \to S'$ denote the projections, is $f'$-generating. 
\end{Definition}
\begin{remark}\label{rem:universal-qaff}
 Note that $\Vect$ is universally $f$-generating if and only if it is so on an fpqc covering of the target (see, \cite[Prop.\ 2.8 (iii)]{2013arXiv1306.5418G}). Also, if $\Vect$ is $f$-generating, then it remains so after quasi-affine base change (combine \cite[Prop.\ 2.8 (ii),(v),(vi)]{2013arXiv1306.5418G}). In particular, if $S$ has quasi-affine diagonal, then $\mathbf{V}$ is $f$-generating if and only if it is universally $f$-generating (see \cite[Cor.\ 2.10]{2013arXiv1306.5418G}). Ignoring set-theoretic issues,  Remark \ref{E:gens-stack} implies that we can take $\mathbf{W} = \QCoh(S)$ in the definition of $f$-generating.
\end{remark}
 \begin{Definition}
 Let $f \colon X \to S$ be a morphism of algebraic stacks. If $\Vect$ is a set of vector bundles on $X$, we say that $f$ has the \emph{$\Vect$-resolution property} if $\Vect$ is universally $f$-generating. Following \cite[Def.\ 5.1]{2013arXiv1306.5418G}, we say that $f$ has the \emph{resolution property} if $f$ has the $\Vect$-resolution property for some $\Vect$. If such a $\mathbf{V}$ arises as a set of submodules of the
	polynomials in a fixed vector bundle $V$ and its dual that are split
	by restriction to the frame bundle of $V$, then we say that $V$ is an
	\emph{$f$-tensor generator} \cite[Defn.~6.1]{2013arXiv1306.5418G}. Such a $V$ is \emph{$f$-R-faithful} if it is also $f$-faithful (note that $f$-tensor-generators are automatically $f$-faithful if $f$ has relatively affine stabilizers by \cite[Thm.\ 6.4]{2013arXiv1306.5418G}). 
\end{Definition}

\begin{remark} \label{rem:composition} An algebraic stack $X$ has the $\Vect$-resolution property if and
only if the morphism $X \to \Spec \mathbf{Z}$ has the $\Vect$-resolution
property. In general, if $Y$ has the $\mathbf{V}'$-resolution
property and $f \colon X \to Y$ is a morphism of algebraic
stacks with the $\Vect$-resolution property, then $X$ has the $\Vect\otimes_{\Orb_X} f^*\mathbf{V}'$-resolution property \cite[Prop.\ 2.8(v)]{2013arXiv1306.5418G}.
\end{remark}
\begin{Example}
    Let $A$ be an abelian scheme of positive dimension over a field $k$. If $g\colon \spec k \to BA$ denotes the standard covering, then $\Orb_{\spec k}$ is $g$-generating but not universally $g$-generating. Also, $g^*$ induces an equivalence $\QCoh(BA) \simeq \QCoh(\spec k)$, so $f\colon BA \to \Spec k$ has the resolution property with $\mathcal{O}_{BA}$ as a $f$-tensor-generator. However, $BA$ admits no $f$-faithful vector bundles.
\end{Example}
\begin{Example} \label{ex:knownres} If $X$ admits an ample family
  $L_1$, $\dots$, $L_r$ of line bundles \cite[Tag \spref{0FXR}]{stacks-project} (e.g. $X$ is quasi-projective over a
  field or is noetherian, normal and $\Q$-factorial with affine diagonal), then it satisfies the
  $\Vect$-resolution property \cite[Ex.\ 5.9(i)]{2013arXiv1306.5418G},
  where $\Vect = \{ L_i^{m} \,:\, m\leq 0,\, i=1,\dots, r\}$. In
  dimensions two and above, there are proper schemes with no
  nontrivial line bundles.  However, two-dimensional separated
  algebraic spaces always have enough vector bundles (see
  \cite[Thm.\ 2.1]{NormalSurfaceRes}, \cite[Thm.\ 5.2]{gross_2012}, and
  \cite[Thm.\ 41]{mathur2021resolution}). Nothing is known for smooth separated
  algebraic spaces or normal separated schemes over a field in dimension $\geq 3$, except that they enjoy the resolution property after removing a closed subset of codimension $\geq 3$ \cite{zbMATH07641743}.
\end{Example}
\begin{Example} \label{ex:stackres} For algebraic stacks the situation
  is more complicated. Kresch and Vistoli showed that a
  Deligne--Mumford stack which is smooth, generically tame and separated over a
  field satisfies the resolution property if its coarse moduli space
  is quasi-projective \cite[Thm.~1.3]{MR2026412}. The case of $2$-dimensional normal
  tame algebraic stacks with finite diagonal over a field was settled in
  \cite[Thm.~1]{mathur2021resolution}. There are also non-torsion (and
  hence, non-regular) $\mathbf{G}_m$-gerbes which do not have the
  resolution property \cite[Rem.~1.11b]{Brauer2}. As far as the authors are aware, there
  is no known example of a separated or smooth algebraic stack with affine
  diagonal which does not have the resolution property.
\end{Example}

Surprisingly, Totaro and Gross (see \cite[Thm.\ 1.1]{MR2108211} and
\cite[Thm.\ 1.1, Thm.\ 6.10]{2013arXiv1306.5418G}) were able to relate
quotient stacks and the resolution property. We have the following
refinement, whose proof is the same as Gross', but we just keep track of
the generating set.
\begin{Theorem} \label{thm:totgross} Let $f \colon X \to S$ be a
  morphism of quasi-compact and quasi-separated algebraic
  stacks. Assume that $f$ has affine stabilizers.
  \begin{enumerate}
  \item  \label{thm:totgross:gens} Let $\Vect$ be a set of vector bundles on $X$ that is closed
    under direct sums. If $\Vect$ is universally $f$-generating, then there is a
    vector bundle $V\in \Vect$ of rank $n$ on $X$ whose frame bundle
    is quasi-affine over $S$. 
  \item \label{thm:totgross:frame} Let $V$ be a vector bundle on
    $X$. Then $V$ has frame bundle quasi-affine over $S$ if and only
    if $V$ is an $f$-tensor generator.
  \end{enumerate}
In particular, a quasi-compact and quasi-separated
    algebraic stack with affine stabilizers at closed points has the resolution
    property if and only if it can be written in the form $[U/\GL_n]$,
    where $U$ is a quasi-affine scheme.
\end{Theorem}
\begin{proof}
  Claim \eqref{thm:totgross:frame} is \cite[Thm.\
  6.4]{2013arXiv1306.5418G}. For \eqref{thm:totgross:gens}: consider the
  natural inverse system of $X$-stacks formed by
  \[
    F_J\colon \prod_{W \in J} \Fr(W) \to X,
  \]
  where $J \subset \Vect$ is a finite subset. Since each frame bundle
  is affine over $X$, the transition maps in the inverse system are
  affine. Hence, the inverse limit is an algebraic stack
  $p\colon F \to X$, which is affine over $X$ \cite[Thm.\
  C(i)]{rydh-2009}. Since $\Vect$ is generating for $f$ and every
  vector bundle in $\Vect$ becomes trivial when restricted to $F$, it
  follows that $\Orb_F$ is a tensor generator for the morphism
  $f \circ p \colon F \to X \to S$. Thus, the morphism $f\circ p$ is
  quasi-affine \cite[Prop.\ 4.1]{2013arXiv1306.5418G}. By
  absolute approximation \cite[Thm.\ C(i)]{rydh-2009}, there is a finite subset
  $J \subset \Vect$ such that $F_J \to S$ is quasi-affine. By
  \cite[Lem.\ 1.1]{rydh2009remarks}, the frame bundle of
  $V=\bigoplus_{W \in J} W \in \Vect$ is quasi-affine. For the final claim, it suffices to show that an algebraic stack with the resolution property and affine stabilizers at closed points, has affine stabilizers everywhere. This is just \cite[Lem.~5.15]{2013arXiv1306.5418G}.
\end{proof}

If a quasi-compact and quasi-separated scheme has an ample line
bundle, then it can be written in the form $[U/\GL_1]$, where $U$ is a
quasi-affine scheme. From this perspective, it is natural to view the
resolution property as a higher-dimensional analogue of
quasi-projectivity. Note that the converse does not hold (e.g., take
$U=\A^2-\{0\}$ with $\GL_1$ acting with weights $(1,-1)$; then
$[U/\GL_1]$ is the line with the doubled-origin, which is not
separated, so does not admit an ample line bundle---but it does admit an ample family of line bundles).

\begin{definition}\label{D:embedding-groups}
  Let $S$ be an algebraic stack. We say that a morphism $G \to S$ is a
  \emph{group} if it is a group object in representable morphisms over $S$.  If $G \to S$ is a group that is flat and of finite presentation, then we say that it is
  \begin{enumerate}
  \item \emph{embeddable} if it admits a group monomorphism to $\GL(E)$ for
    some vector bundle $E$ of rank $n$ on $S$; 
      \item \emph{R-embeddable}  if it admits a group monomorphism to $\GL(E)$
    for some vector bundle $E$ of rank $n$ on $S$ such that the quotient
    $\GL(E)/G$ is quasi-affine over $S$ (note that this implies that
    $G \to \GL(E)$ is a closed immersion).
  \end{enumerate}
\end{definition}
\begin{Example} \label{ex:faithful} An embedded group is
  not necessarily R-embedded, even over a field. Indeed, if 
  $\B_{2,\C} \subset \GL_{2,\C}$ is a Borel subgroup, then
  $\GL_{2,\C}/\B_{2,\C} \simeq \mathbf{P}^1_\C$. 
\end{Example}
\begin{remark}
  If $G \to S$ is a group that is flat and of finite presentation, then $G \to S$ is embeddable if and only if the structure morphism of
  the classifying stack $\classB G \to S$ is a global
  quotient. Indeed, if $\classB G \to S$ is a global quotient, then
  there is a vector bundle $E$ on $\classB G$ whose total space is
  representable over $S$. This gives a representable $S$-morphism
  $\classB G \to \classB \GL_{\rank E}$; the induced map on inertia
  groups gives a monomorphism $G \hookrightarrow \GL(E')$ for some
  vector bundle $E'$ on $S$. The converse is similar. In particular,
  $G \to S$ is embeddable if and only if there is a vector bundle $E$
  on $S$ with a faithful action of $G$.  We call these \emph{faithful}
  $G$-representations. If, in addition, $G \to S$ has affine fibers, then it is R-embeddable if and only if
  $\classB G \to S$ has the resolution property (Theorem \ref{thm:totgross}).  In particular,
 such a $G \to S$ is R-embeddable if and only if there is a vector bundle
  $E$ on $S$ with a faithful action such that the quotient
  $\GL(E)/G \to S$ is a quasi-affine morphism. We will call these
  $G$-representations \emph{R-faithful}.
\end{remark}
We expect the following result to be well-known to experts. Over a field, it is due to
Rosenlicht \cite[Thm.~3]{MR0130878}.
\begin{proposition}
  \label{P:hom} Let $S$ be an algebraic stack.
  \begin{enumerate}
  \item \label{lem:i:hom:upper} The quotient morphism
    $\GL_{n,S}/\U_{n,S} \to S$ is quasi-affine. 
  \end{enumerate}
  Let $G \subseteq \U_{n,S}$ be a closed subgroup that is flat and of
  finite presentation over $S$.
  \begin{enumerate}[resume]
  \item \label{lem:i:hom:sub} If $S$ is a normal scheme
     and the quotient $\U_{n,S}/G$ is representable by a scheme (e.g. if $S$ is Dedekind or the spectrum of a field, see \cite[Rem.~VI${}_{\mathrm{B}}$.9.3(b)]{SGA3-I-NEW}), then the quotient $\U_{n,S}/G \to S$ is quasi-affine. 
  \item \label{lem:i:hom:finite} If $G \to S$ is finite, then the
    quotient $\U_{n,S}/G \to S$ is affine.
  \end{enumerate}
\end{proposition}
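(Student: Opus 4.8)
The plan is to treat the three parts separately: (1) and (3) are soft, while (2) requires a dévissage of $\U_n$ along its $\mathbf{G}_a$-quotients in which the normality hypothesis is essential.

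\emph{Part (1).} Since the formation of $\GL_{n,S}/\U_{n,S}$ commutes with base change on $S$ and quasi-affineness is stable under base change, I would first reduce to $S=\Spec\mathbf{Z}$. As $\U_n$ is the unipotent radical of the Borel subgroup $\B_n$ of upper-triangular invertible matrices, the morphism $\GL_n/\U_n\to\GL_n/\B_n$ is a torsor under the diagonal torus $T=\B_n/\U_n\cong\mathbf{G}_m^{\,n}$, and $\GL_n/\B_n$ is the flag scheme, projective over $\mathbf{Z}$. I would choose a $\mathbf{Z}$-basis $\chi_1=\rho,\ \chi_2=\epsilon_1,\ \dots,\ \chi_n=\epsilon_{n-1}$ of $X^{*}(T)=\mathbf{Z}^n$ with $\rho=(n,n-1,\dots,1)$: this $\rho$ is regular dominant, so the associated line bundle $L_\rho$ on $\GL_n/\B_n$ is ample on each fibre by Borel--Weil, hence relatively ample, hence ample. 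This choice trivializes $T$ and exhibits
\[
\GL_n/\U_n\;\cong\;L_{\chi_1}^{\times}\times_{\GL_n/\B_n}L_{\chi_2}^{\times}\times_{\GL_n/\B_n}\cdots\times_{\GL_n/\B_n}L_{\chi_n}^{\times},
\]
where $L^{\times}$ denotes the complement of the zero section in the total space of a line bundle $L$. Now $L_\rho^{\times}\to\Spec\mathbf{Z}$ is quasi-affine because $L_\rho$ is ample on the projective scheme $\GL_n/\B_n$, while for any line bundle $M$ on any scheme $Y$ the morphism $M^{\times}=\underline{\Spec}_Y\bigl(\bigoplus_{m\in\mathbf{Z}}M^{\otimes m}\bigr)\to Y$ is affine; peeling off the remaining factors one at a time shows $\GL_n/\U_n\to L_\rho^{\times}$ is affine, and composing with the quasi-affine morphism $L_\rho^{\times}\to\Spec\mathbf{Z}$ gives the claim. (Equivalently, $\U_{n,S}$ is exactly the stabilizer of $v=(e_1,\,e_1\wedge e_2,\,\dots,\,e_1\wedge\cdots\wedge e_n)$ in $\bigoplus_{k=1}^n\wedge^{k}\Orb_S^{\,n}$, so $\GL_{n,S}/\U_{n,S}$ is an orbit, hence locally closed; the flag-scheme argument just makes the behaviour over $\mathbf{Z}$ transparent.)

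\emph{Part (3).} If $G\to S$ is finite, then — being also flat and finitely presented — it is finite locally free, so the $G$-torsor $\U_{n,S}\to\U_{n,S}/G$ is finite locally free. Hence $\U_{n,S}/G$ is the relative spectrum over $S$ of the sheaf of $G$-invariants in the pushforward of $\Orb_{\U_{n,S}}$; by the standard fact that the quotient of an $S$-affine scheme by a free action of a finite locally free $S$-group scheme is $S$-affine, $\U_{n,S}/G\to S$ is affine.

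\emph{Part (2).} Here I would induct on $n$ via the extension
\[
1\longrightarrow\mathbf{G}_{a,S}^{\,n-1}\longrightarrow\U_{n,S}\longrightarrow\U_{n-1,S}\longrightarrow 1,
\]
in which $\mathbf{G}_a^{\,n-1}=\{\,I+\sum_{i<n}c_iE_{i,n}\,\}$ is the normal ``last-column'' subgroup and the quotient map is projection to the upper-left $(n-1)\times(n-1)$ block. Given a flat, finitely presented $G\subseteq\U_{n,S}$, let $\bar G\subseteq\U_{n-1,S}$ be its image and $G_0=\ker(G\to\bar G)=G\cap\mathbf{G}_{a,S}^{\,n-1}$. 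The composite $\U_n\to\U_{n-1}\to\U_{n-1}/\bar G$ is $G$-invariant, so it factors through a morphism $\U_n/G\to\U_{n-1}/\bar G$ which, after the fppf base change along $\U_{n-1}\to\U_{n-1}/\bar G$, becomes a base change of $\mathbf{G}_{a,S}^{\,n-1}/G_0\to S$ (the preimage of $\bar G$ in $\U_n$ is $\mathbf{G}_{a,S}^{\,n-1}\cdot G$, with quotient $\mathbf{G}_{a,S}^{\,n-1}/G_0$). Since quasi-affineness is fppf-local on the target and stable under base change and composition, it suffices to know: (i) $\bar G$ is flat and finitely presented over $S$, so that $\U_{n-1,S}/\bar G\to S$ is quasi-affine by the inductive hypothesis and $G_0$ is automatically flat and finitely presented, being the pullback of the fppf morphism $G\to\bar G$ along the identity section; and (ii) $\mathbf{G}_{a,S}^{\,n-1}/G_0\to S$ is quasi-affine (in fact affine), which follows from a parallel dévissage of the vector group along its $\mathbf{G}_a$-quotients, resting on Part (3) for the finite pieces and on the observation that a flat, finitely presented subgroup of $\mathbf{G}_{a,S}$ is either all of $\mathbf{G}_{a,S}$ or finite over $S$.

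The crux — and the only place the hypothesis enters — is (i): the scheme-theoretic image of a flat, finitely presented homomorphism of group schemes need not be flat over a general base, but over a normal Noetherian base it can be upgraded to a flat, finitely presented closed subgroup scheme (after reducing to $S$ Noetherian, normality is what one uses to pass from the codimension-$\le 1$ behaviour, where Rosenlicht's classical field-theoretic picture applies, to all of $S$). I expect establishing this flatness of images over the normal base to be the main technical obstacle; granted it, the remainder of (2) is formal bookkeeping with quasi-affine morphisms together with Parts (1) and (3).
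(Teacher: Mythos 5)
Your parts (1) and (3) are correct. Part (3) coincides with the paper's argument (finite locally free torsor over an $S$-affine scheme, hence affine quotient). Part (1) is a genuinely different and valid route: the paper reduces to $\Spec\mathbf{Z}$ exactly as you do, but then treats (1) and (2) \emph{simultaneously} by invoking Raynaud's criterion [Raynaud, Thm.~VII.2.1] — over a normal base, a homogeneous space $H/K$ under a smooth group with connected fibers is quasi-affine as soon as the fiberwise Picard groups $\Pic(H_s/K_s)$ are torsion — and then checks torsionness via the complex $\Hom(K,\mathbf{G}_m)\to\Pic(H/K)\to\Pic(H)$, the vanishing of characters of unipotent groups, and $\Pic(\U_n)=\Pic(\GL_n)=0$. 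Your explicit description of $\GL_n/\U_n$ as a fiber product of punctured line bundles over the flag scheme, with one factor ample, is more concrete and buys an elementary proof of (1) without Raynaud; what it does not buy is (2).

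Part (2) is where there is a genuine gap, and you have correctly located it but not closed it. Your dévissage hinges on the claim that the image $\bar G\subseteq\U_{n-1,S}$ of $G$ is again a flat, finitely presented closed subgroup scheme over the normal base $S$; you explicitly defer this, and it is not a routine reduction — it is essentially as hard as the statement you are trying to prove, and no argument is offered beyond an appeal to "Rosenlicht's classical field-theoretic picture" in codimension $\le 1$. The paper avoids this entirely: Raynaud's theorem applies directly to the pair $G\subseteq\U_{n,S}$ with no induction and no images, using only that $\Hom(G_{k(s)},\mathbf{G}_m)=0$ for unipotent $G_{k(s)}$ and $\Pic(\U_{n,k(s)})=0$. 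A second, independent problem is your step (ii): the assertion that a flat, finitely presented subgroup of $\mathbf{G}_{a,S}$ is either all of $\mathbf{G}_{a,S}$ or finite over $S$ is false — the paper's own Example \ref{E:non-separated-unipotent} exhibits, over the normal base $S=\mathbf{A}^1_k$ in characteristic $2$, a flat finitely presented subgroup $H\subseteq(\mathbf{Z}/2\mathbf{Z})_S\subseteq\mathbf{G}_{a,S}$ that is neither, and whose quotient $\mathbf{G}_{a,S}/H$ is not even separated. To salvage (ii) you would at minimum need to restrict to \emph{closed} subgroups and then prove that a closed, flat, quasi-finite subgroup of $\mathbf{G}_{a,S}$ over a normal base is finite, which is again a nontrivial claim you have not supplied. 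As it stands, part (2) is a plausible reduction strategy, not a proof.
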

\begin{proof}
  Since the inclusion $\U_{n,S} \subset \GL_{n,S}$ is defined over
  $\Spec \mathbf{Z}$, in \eqref{lem:i:hom:upper} it suffices to assume
  $S=\Spec \mathbf{Z}$. In particular, we may assume that $S$ is
  Dedekind. We will now prove that both of the homogenous spaces
  $\U_{n,S}/G$ and $\GL_{n,S}/\U_{n,S}$ are quasi-affine over $S$
  using \cite[Thm.~VII.2.1]{Raynaud}, thereby establishing both
  \eqref{lem:i:hom:upper} and \eqref{lem:i:hom:sub}. Since $\GL_{n,S}$
  and $\U_{n,S}$ are both smooth with connected fibers, it suffices to
  show that the abelian groups $\Pic(\U_{n,k(s)}/G_{k(s)})$ and
  $\Pic(\GL_{n,k(s)}/\U_{n,k(s)})$ are torsion. To this end, first
  note that the cohomology of the complex
  \[
    \Hom(K,\mathbf{G}_m) \to \Pic(H/K) \to \Pic(H)
  \]
  is torsion for any inclusion of finite-type group schemes
  $K \subset H$ over a field by
  \cite[Thm.~VII.1.5]{Raynaud}. Moreover, by
  \cite[Prop.~XVII.2.4(ii)]{SGA3-II-NEW} we always have
  $\Hom(K,\mathbf{G}_m)=0$ for every unipotent group scheme $K$ over a
  field. Further, the Picard groups of $\U_{n,k(s)}$ and
  $\GL_{n,\kappa(s)}$ both vanish. Indeed, in the former case the
  underlying scheme is isomorphic to affine space, and in the latter case
  it is a principal open of affine space. Thus, the two outer terms in
  the complex are zero in both of our cases and it follows that
  $\Pic(\U_{n,k(s)}/G_{k(s)})$ and
  $\Pic(\GL_{n,k(s)}/\U_{n,k(s)})$ are torsion, as desired.

  For \eqref{lem:i:hom:finite}: the morphism $\U_{n,S} \to \U_{n,S}/G$
  is finite flat. Since $\U_{n,S} \to S$ is affine, the same is true
  for $\U_{n,S}/G$ (e.g.~\cite[Thm. 8.1]{rydh-2009}).
\end{proof}

\begin{proposition} \label{prop:framebundle} Let $f \colon X \to S$ be a quasi-compact and quasi-separated morphism of algebraic stacks. 

\begin{enumerate} 

\item \label{prop:framebundle:sum} Let $W$ and $V$ be vector bundles on $X$. If the frame bundle $F_W$ is representable (resp., quasi-affine) over $S$, then the frame bundle of $V \oplus W$ is also representable (resp.,  quasi-affine) over $S$.
\item \label{prop:framebundle:cohaff} If $f$ is adequately affine
  (e.g. a gerbe banded by a reductive group scheme, a good/adequate
  moduli space morphism, or a coarse moduli space morphism \cite[Def.~4.1.1]{MR3272912}), then
  every $f$-faithful vector bundle $V$ on $X$ has a frame bundle
  which is affine over $S$.
\end{enumerate}  \end{proposition}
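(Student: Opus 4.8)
The plan is to treat the two parts separately, with part \eqref{prop:framebundle:sum} being essentially formal and part \eqref{prop:framebundle:cohaff} requiring the structure of adequately affine morphisms.

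For \eqref{prop:framebundle:sum}, I would observe that the frame bundle $F_{V\oplus W} = \underline{\Isom}_X(\Orb_X^{\oplus(r+s)}, V\oplus W)$ admits a natural morphism to $F_W$ by composing an isomorphism with the projection $V \oplus W \to W$ — more precisely, there is a morphism $F_{V\oplus W} \to \underline{\Hom}_X(\Orb_X^{\oplus s}, V\oplus W)$ picking out the images of the last $s$ basis vectors, landing in the locus where these map isomorphically (via the projection, or after a base change) onto something, so that one gets a canonical $X$-morphism $F_{V\oplus W} \to F_W \times_X (\text{affine space bundle over } X)$. The cleanest route: the group $\GL_{r+s}$ contains the parabolic $P$ of block-upper-triangular matrices stabilizing $\Orb_X^{\oplus r} \subseteq \Orb_X^{\oplus r+s}$, and $\GL_{r+s}/P$ is projective, so in fact I would instead use that there is an affine morphism $F_{V \oplus W} \to F_V \times_X F_W$ coming from the subgroup $\GL_r \times \GL_s \hookrightarrow \GL_{r+s}$: the quotient $\GL_{r+s}/(\GL_r\times\GL_s)$ need not be affine. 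So the right statement is the torsor-theoretic one: $F_{V\oplus W} \to F_W$ is a $\underline{\Hom}(\Orb^{\oplus r}, V\oplus W) \rtimes \GL_r$-type bundle — concretely it is (Zariski-locally on $F_W$, where $W$ trivializes) isomorphic to $F_V \times (\text{affine space})$, hence affine over $F_W$. Composing the affine morphism $F_{V\oplus W} \to F_W$ with the representable (resp. quasi-affine) morphism $F_W \to S$ gives a representable (resp. quasi-affine, since affine morphisms are quasi-affine and the composition of quasi-affine morphisms is quasi-affine) morphism $F_{V\oplus W} \to S$. I expect the only subtlety here is getting the intermediate "$F_{V\oplus W} \to F_W$ is affine" claim stated correctly; once that is pinned down the rest is formal.

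For \eqref{prop:framebundle:cohaff}, let $V$ be $f$-faithful of rank $n$, so that $F_V \to X$ is a $\GL_{n,X}$-torsor and the induced morphism $X \to \B\GL_{n,S}$ is representable over $S$; equivalently $F_V \to S$ is representable, and I must upgrade this to affineness using that $f$ is adequately affine. The key point is that adequately affine morphisms satisfy: $f$ is affine if and only if $f$ is representable — or, more usefully, a representable morphism which factors as $f$ composed with something is affine when $f$ is adequately affine and the target piece is affine. Here $F_V \to S$ factors as $F_V \to X \xrightarrow{f} S$ where the first map $F_V \to X$ is affine (it is a $\GL_n$-torsor, in particular affine) and $f$ is adequately affine. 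So the real content is a lemma of the form: \emph{if $g\colon Y \to X$ is affine, $f \colon X \to S$ is adequately affine, and $f\circ g$ is representable, then $f \circ g$ is affine.} This should follow from the characterization of adequately affine morphisms in terms of $f_*$ being "adequately exact" together with the fact that a representable morphism whose pushforward of $\Orb$ generates (or: a representable morphism to $S$ which becomes affine after base change along a faithfully flat $S' \to S$ coming from an adequate modification) is affine — equivalently one reduces to the case $S$ affine and shows $\Gamma(F_V, \Orb)$ is "big enough" that $F_V \to S$ is affine by an approximation / adequacy argument analogous to the proof of Theorem \ref{thm:totgross}.

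The main obstacle I anticipate is \eqref{prop:framebundle:cohaff}: proving that representable plus adequately affine forces affine. The cheap special cases (coarse moduli spaces are proper and quasi-finite morphisms, good moduli spaces have $f_*$ exact and conservative on quasi-coherent sheaves) each have their own quick argument — for a good moduli space $f$, a representable $F_V \to S$ is finite type and $f_*\Orb_{F_V}$ is a quasi-coherent $\Orb_S$-algebra whose relative spectrum receives $F_V$ via an $S$-morphism which is an isomorphism because $f$ is a good moduli space of its source — but unifying these under "adequately affine" in the sense of \cite[Def.~4.1.1]{MR3272912} requires invoking the adequacy machinery: the comparison map $F_V \to \underline{\Spec}_S (f\circ g)_*\Orb_{F_V}$ is an adequate homeomorphism that is also representable and quasi-finite after the fact, hence an isomorphism, giving affineness. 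I would structure the proof so that this single lemma does all the work and the three parenthetical examples are subsumed.
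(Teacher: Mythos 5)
Your part \eqref{prop:framebundle:sum} has a genuine gap: the morphism $F_{V\oplus W}\to F_W$ on which your entire argument rests does not exist. An isomorphism $\phi\colon \Orb_X^{\oplus(r+s)}\to V\oplus W$ gives no canonical frame of $W$ --- composing $\phi$ with the projection to $W$ and restricting to the last $s$ basis vectors need not be an isomorphism, and there is no natural retraction. You sense the difficulty (hence the detours through the parabolic $P$ and through $\GL_r\times\GL_s$), but the conclusion you settle on, that $F_{V\oplus W}\to F_W$ is Zariski-locally $F_V\times(\text{affine space})$, presupposes a map that is not there; locally the two frame bundles are $X\times\GL_{r+s}$ and $X\times\GL_s$ and there is no group retraction $\GL_{r+s}\to\GL_s$. (As an aside, your claim that $\GL_{r+s}/(\GL_r\times\GL_s)$ need not be affine is also false: it is the principal open $D(\det)$ inside the affine space of pairs of matrices.) The paper's argument avoids all of this by working with the equivalent characterization of representability of $\Frame(E)\to S$ in terms of the relative stabilizers of $f$ acting faithfully on $E$ at geometric points: faithfulness on the summand $W$ trivially implies faithfulness on $V\oplus W$, since the kernel of the action on $V\oplus W$ is contained in the kernel of the action on $W$. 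For the quasi-affine statement the paper cites Rydh \cite[Lem.~1.1]{rydh2009remarks}; no morphism $F_{V\oplus W}\to F_W$ is ever needed.

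For part \eqref{prop:framebundle:cohaff} you have correctly isolated the right statement --- $F_V\to S$ is representable (by $f$-faithfulness) and adequately affine (as the composite of the affine morphism $F_V\to X$ with the adequately affine $f$), and one needs that representable plus adequately affine implies affine --- but this is exactly Alper's Serre-type criterion \cite[Thm.~4.3.1]{MR3272912}, which the paper simply invokes. Your proposed proof of that lemma (via an ``adequate homeomorphism that is representable and quasi-finite after the fact'') is not a proof and should be replaced by the citation; with that substitution this part is fine and matches the paper.
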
 

\begin{proof}
  For the representability part of \eqref{prop:framebundle:sum}: the
  hypothesis implies that the relative stabilizers act faithfully on the
  fibers of $W$ and therefore they must act faithfully on the fibers
  of $V \oplus W$, as desired. For the latter claim, see
  \cite[Lem.~1.1]{rydh2009remarks}. For
  \eqref{prop:framebundle:cohaff}: $F_V \to S$ is representable and
  adequately affine, so affine \cite[Thm.~4.3.1]{MR3272912}.
\end{proof}
\section{Sch\"appi's Theorem}
Let $A$ be a ring and let $M$ be a flat $A$-module. Lazard's theorem
\cite{MR0168625} tells us that $M$ is a filtered colimit of free
$A$-modules of finite rank. Extending this result to the non-affine
situation is surprisingly subtle \cite{MR3079799}. Nonetheless,
Sch\"appi recently proved a remarkable Lazard-type theorem for a restricted
but very useful class of flat modules that arise in algebraic geometry
\cite[Thm.~1.3.1]{2012arXiv1206.2764S}.

Sch\"appi approaches his result through comodules over flat Hopf
algebroids. Since Sch\"appi's result is crucial for our article and
ought to be better known amongst algebraic geometers, we have
translated his category-theoretic proof into a direct proof for
algebraic stacks. We have made some simple but algebro-geometrically
natural generalizations to his hypotheses. Note that while non-affine
schemes tend to not have interesting projective objects, there are
interesting algebraic stacks (e.g., those with affine tame
\cite{MR2427954} or good \cite{2008arXiv0804.2242A} moduli spaces)
that do.
\begin{Theorem}[Sch\"appi]\label{T:schappi}
  Let ${f \colon Y \to X}$ be a flat morphism of quasi-compact and quasi-separated algebraic stacks. Let
  $\Vect$ be a set of isomorphism classes of vector bundles on $X$. Let $M$ be a vector
  bundle on $Y$ that is a projective object of $\QCoh(Y)$. If $X$ has the $\Vect$-resolution property, then $f_*M$ is a
  filtered colimit of finite direct sums of objects of
  $\Vect$ and their duals. In particular, if $Y$ is cohomologically affine
   (e.g., an affine scheme), then $f_*\Orb_Y$ is a
  filtered colimit of vector bundles.
\end{Theorem}
\begin{proof}
  We may replace $\Vect$ by the set of all finite direct sums of
  objects of $\Vect$ and their duals.  Let $G$ be a quasi-coherent $\Orb_X$-module.
  Consider the category $\Vect_G$ whose objects are pairs $(H,\eta)$,
  where $H\in \Vect$ and $\eta\colon H \to G$ is an
  $\Orb_X$-module homomorphism. A morphism
  $h\colon (H,\eta) \to (H',\eta')$ in $\Vect_G$ is an $\Orb_X$-module
  homomorphism $h\colon H \to H'$ such that $\eta = \eta'\circ
  h$. Consider the functor $\mu_G \colon \Vect_G \to \QCoh(X)$ that
  sends $(H,\eta)$ to $H$. It remains to establish the following two
  claims.
  \begin{claim}\label{TC:schappi:colim}
    Every pair of objects in $\Vect_G$ has an upper bound and
    $\colim(\mu_G) \simeq G$.
  \end{claim}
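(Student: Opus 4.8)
The plan is to verify the two assertions separately, using only that $\Vect$ is closed under finite direct sums and that it generates $\QCoh(X)$; the quasi-compactness and affine-diagonal hypotheses on $X$ play no role here.

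\textbf{Upper bounds.} Given $(H_1,\eta_1)$ and $(H_2,\eta_2)$ in $\Vect_G$, I would form $H_1\oplus H_2\in\Vect$ together with the map $\eta\colon H_1\oplus H_2\to G$, $(x,y)\mapsto\eta_1(x)+\eta_2(y)$. The two coordinate inclusions $H_i\hookrightarrow H_1\oplus H_2$ are then morphisms $(H_i,\eta_i)\to(H_1\oplus H_2,\eta)$ in $\Vect_G$, so $(H_1\oplus H_2,\eta)$ is the desired upper bound. Hence $\Vect_G$ is directed, and writing $\sigma_{(H,\eta)}\colon H\to\colim(\mu_G)$ for the structure map, the subobjects $\mathrm{im}(\sigma_{(H,\eta)})\subseteq\colim(\mu_G)$ form a directed system with sum all of $\colim(\mu_G)$.

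\textbf{The colimit is $G$.} Morphisms in $\Vect_G$ commute with the maps to $G$, so $\{\eta\}_{(H,\eta)}$ is a cocone under $\mu_G$ with vertex $G$; let $\beta\colon\colim(\mu_G)\to G$ be the induced map. For surjectivity I would use $\beta\circ\sigma_{(H,\eta)}=\eta$ to get $\mathrm{im}(\beta)=\sum_{(H,\eta)}\mathrm{im}(\eta)$; since $\Vect$ generates there is an epimorphism $p\colon\bigoplus_{a\in\Vect}a^{\oplus I_a}\twoheadrightarrow G$, and each inclusion of a direct summand followed by $p$ is an object of $\Vect_G$, so $\mathrm{im}(\beta)\supseteq\mathrm{im}(p)=G$. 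For injectivity, set $N=\ker(\beta)$. For each object one has $\sigma_{(H,\eta)}^{-1}(N)=\ker(\beta\circ\sigma_{(H,\eta)})=\ker(\eta)=:K_{(H,\eta)}$, hence $N\cap\mathrm{im}(\sigma_{(H,\eta)})=\sigma_{(H,\eta)}(K_{(H,\eta)})$; intersecting the directed sum $\colim(\mu_G)=\sum_{(H,\eta)}\mathrm{im}(\sigma_{(H,\eta)})$ with $N$ (intersection commutes with directed sums of subsheaves) gives $N=\sum_{(H,\eta)}\sigma_{(H,\eta)}(K_{(H,\eta)})$.

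\textbf{The main point.} It remains to show that each composite $K_{(H,\eta)}\hookrightarrow H\xrightarrow{\sigma_{(H,\eta)}}\colim(\mu_G)$ vanishes, which I expect to be the crux. Here I would invoke the resolution property a second time: choose an epimorphism $\bigoplus_j B_j\twoheadrightarrow K_{(H,\eta)}$ with each $B_j\in\Vect$. For each $j$, the composite $b_j\colon B_j\to K_{(H,\eta)}\hookrightarrow H$ satisfies $\eta\circ b_j=0$, so it is a morphism $(B_j,0)\to(H,\eta)$ in $\Vect_G$; but the zero map $B_j\to H$ is also a morphism $(B_j,0)\to(H,\eta)$. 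Compatibility of the colimit cocone with both then forces $\sigma_{(H,\eta)}\circ b_j=\sigma_{(B_j,0)}=\sigma_{(H,\eta)}\circ 0=0$. Since the $b_j$ collectively surject onto $K_{(H,\eta)}$, we get $\sigma_{(H,\eta)}|_{K_{(H,\eta)}}=0$, so $N=0$ and $\beta$ is an isomorphism.
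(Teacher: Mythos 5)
Your proof is correct, and while it proves the same statement by the same underlying mechanism, the route is genuinely different from the paper's. The paper verifies the universal property directly: given an arbitrary compatible cocone $\nu_{(H,\eta)}\colon H\to N$, it chooses a presentation $\bigoplus_j P_j\to\bigoplus_{(H,\eta)}H\xrightarrow{\oplus\eta}G\to 0$ with $P_j\in\Vect$, factors each $p_j$ through a finite sub-sum $\tilde Q_j$ (using that a vector bundle on a quasi-compact, quasi-separated stack is compact, so maps out of it into an infinite direct sum land in finitely many summands), and then kills the relations by observing that $\tilde p_j$ and $0$ are both morphisms $(P_j,0)\to(\tilde Q_j,\tilde q_j)$ in $\Vect_G$. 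You instead build the comparison map $\beta\colon\colim(\mu_G)\to G$ and show it is epi and mono by working in the subobject lattice of the colimit, using AB5 and the identity $\sigma(\sigma^{-1}(N))=N\cap\mathrm{im}(\sigma)$; your ``main point'' --- that $b_j$ and $0$ are parallel morphisms $(B_j,0)\to(H,\eta)$, forcing $\sigma_{(H,\eta)}$ to vanish on $\ker(\eta)$ --- is exactly the paper's relation-killing trick, just applied to the kernel of each $\eta$ rather than to the relations of a global presentation. What each approach buys: the paper's argument needs compactness of vector bundles (hence quasi-compactness of $X$) for the finite-sub-sum factorization but requires nothing about the ambient category beyond cocompleteness; yours avoids compactness entirely, as you correctly note, at the cost of invoking that $\QCoh(X)$ is Grothendieck (AB5), which holds for any algebraic stack, so your version is marginally more general. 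Both arguments use the resolution property twice and the closure of $\Vect$ under finite direct sums (for the upper-bound step), so the hypotheses actually consumed are the same in the setting of the theorem.
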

    \begin{proof}
      \renewcommand{\qedsymbol}{$\blacksquare$} If $(H_1,\eta_1)$ and
      $(H_2,\eta_2) \in \Vect_G$, then
      $(H_1\oplus H_2,\eta_1\oplus \eta_2) \in \Vect_G$. This proves
      the existence of upper bounds for every pair. We now prove that
      $\colim(\mu_G) \simeq G$. To do this, consider a quasi-coherent
      $\Orb_X$-module $N$ together with $\Orb_X$-module homomorphisms
      $\nu_{(H,\eta)} \colon H \to N$ for every $(H,\eta) \in \Vect_G$
      such that if $h\colon (H,\eta) \to (H',\eta')$ is a morphism in
      $\Vect_G$, then $\nu_{(H,\eta)} = \nu_{(H',\eta')} \circ h$. It
      suffices to show that there is a uniquely induced morphism
      $\nu \colon G \to N$ such that $\nu_{(H,\eta)} = \nu \circ \eta$
      for every $(H,\eta) \in \Vect_G$. Since $X$ has the $\Vect$-resolution
      property, $G$ admits a presentation:
      \[
        \bigoplus_{j\in J} P_j \xrightarrow{\oplus p_j}
          \bigoplus_{(H,\eta)\in \Vect_G} H \xrightarrow{\oplus_{(H,\eta)\in\Vect_G} \eta} G \to 0,
      \]
      where $P_j$ belongs to $\Vect$ for all $j\in J$. Let
      $j\in J$ and note that the morphism
      $p_j \colon P_j \to \oplus_{(H,\eta)\in \Vect_G} H$ factors as
      \[
        P_j \xrightarrow{\tilde{p}_j} \bigoplus_{(H,\eta)\in I_j} H
        \subseteq \bigoplus_{(H,\eta)\in \Vect_G} H,
      \]
      where $I_j \subseteq \Vect_G$ is finite. Let
      $\tilde{Q}_j = \oplus_{(H,\eta)\in I_j} H$ and let
      $\tilde{q}_j = \oplus_{(H,\eta) \in I_j} \eta \colon \tilde{Q}_j \to G$ be the
      resulting morphism. Then $\tilde{q}_j\circ \tilde{p}_j = 0$ and
      so $\tilde{p}_j\colon (P_j,0) \to (\tilde{Q}_j,\tilde{q}_j)$ is a morphism
      in $\Vect_G$. Hence, $\nu_{(\tilde{Q}_j,\tilde{q}_j)} \circ \tilde{p}_j=\nu_{(P_j,0)}=\nu_{(P_j,0)} \circ 0=0$. But this means $(\oplus_{(H,\eta) \in \Vect_G} \nu_{(H,\eta)}) \circ (\oplus_j {p}_j) = 0$. By the universal property of cokernels, there is a unique
      morphism $\nu \colon G \to N$ such that
      $\nu \circ \eta = \nu_{(H,\eta)}$ for all $(H,\eta)\in \Vect_G$. 
  \end{proof}
  \begin{claim}\label{TC:schappi:filtered}
   If $M$ is a vector bundle
on $Y$ that is a projective object
in $\QCoh(Y)$, then $\Vect_{f_*M}$ is filtered.
  \end{claim}
  \begin{proof}
    \renewcommand{\qedsymbol}{$\blacksquare$}
    Consider a pair of
    morphisms $h_1$, $h_2 \colon (H,\eta) \to (H',\eta')$; we must
    show that these can be coequalized in $\Vect_{f_*M}$. 
    Take the duals of $h_1$ and $h_2$, which results in morphisms
  $h_1^\vee$, $h_2^\vee \colon H'^\vee \to H^\vee$. Let $E$ be
  their equalizer in $\QCoh(X)$. If $(F,\rho) \in \Vect_E$, then
  taking duals of everything results in a commutative diagram:
  \[
    \xymatrix@R-2pc{& H' \ar[dr] \ar@/^1.5pc/[drr]^{\eta'}& & \\H
      \ar[ur]^{h_1} \ar[dr]_{h_2} & & F^\vee  & f_*M \\ &
      H' \ar[ur] \ar@/_1.5pc/[urr]_{\eta'} & & }
  \]
  Thus, we just need to produce $(F,\rho) \in \Vect_E$ that admits a
  compatible morphism $F^\vee \to f_*M$. By taking adjoints in the
  above diagram, we see that it is sufficient to produce a compatible
  morphism $f^*F^\vee \to M$. But $M$ is a vector bundle,
  so it is sufficient to produce a compatible morphism
  $M^\vee \to f^*F$. Dualizing their defining diagrams we obtain:
  \[
    \xymatrix@R-1pc{& & & f^*H'^\vee \ar[dr]^{f^*h_1^\vee} & \\M^\vee  \ar@/^1pc/[urrr] \ar@/_1pc/[drrr]  & f^*F \ar[r]^{f^*\rho} & f^*E \ar[ur] \ar[dr] & & f^*H^\vee.\\ & & & f^*H'^\vee  \ar[ur]_{f^*h_2^\vee} & }
  \]
  But $f$ is flat, so $f^*E$ is also the equalizer of $f^*h_1^\vee$
  and $f^*h_2^\vee$; hence, there is a compatibly induced morphism
  $M^\vee \to f^*E$. By Claim \ref{TC:schappi:colim},
  $E\simeq \colim(\mu_E)$; hence, $f^*E \simeq f^*\colim(\mu_E)$. Now
  $M^\vee$ is a vector bundle and $Y$ is quasi-compact and
  quasi-separated, so the functor
  $\Hom_{\Orb_Y}(M^\vee,-)=\Gamma(Y,M\tensor_{\Orb_Y} - )$ preserves
  filtered colimits of quasi-coherent sheaves
  \cite[Lem.~1.2(iii)]{perfect_complexes_stacks}. Since $M$ is projective in $\QCoh(Y)$, $M^\vee$ is projective in $\QCoh(Y)$.\footnote{This follows from three observations:
    \begin{enumerate*}
    \item a direct summand of a projective is projective;
    \item if $M$ is projective, then $M\tensor_{\Orb_Y} Q$ is
      projective for any vector bundle $Q$; and
    \item $M^\vee$ is a direct summand of
      $M^\vee \tensor_{\Orb_Y} M \tensor_{\Orb_Y} M^\vee$.
    \end{enumerate*}} Hence, $\Hom_{\Orb_Y}(M^\vee,-)$ is also exact and
  so commutes with \emph{all} colimits of quasi-coherent sheaves. Thus we obtain:
  \begin{align*}
    \Hom_{\Orb_Y}(M^\vee,f^*\text{colim}(\mu_E))&\simeq\Hom_{\Orb_Y}(M^\vee,\text{colim}(f^* \mu_E))\\
    &\simeq\text{colim}_{(F,\rho) \in \Vect_E}\Hom_{\Orb_Y}(M^\vee,f^*F).
  \end{align*}
In particular, $M^\vee \to f^*E$ factors
  through some morphism $f^*\rho\colon f^*F \to f^*E$, where
  $(F,\rho) \in \Vect_E$ \cite[Tag \spref{09WR}]{stacks-project}. The
  claim follows. 
\end{proof}
\end{proof}
\begin{Example}
   In Theorem \ref{T:schappi}, it is necessary to include the duals of the generating
set $\mathbf{V}$. Indeed, let $X=\mathbf{P}^1_{x,y}$ over a field $k$ with coordinates $x$ and $y$. Let $Y=\mathrm{D}_+(x)$ and take $f\colon Y\to X$ to be the inclusion. Then $L=\Orb(1)$ is ample and $X$ has the $\mathbf{V} = \{ L^{m}\}_{m\leq 0}$-resolution property (Example \ref{ex:knownres}). But $f_*\Orb_Y\simeq \varinjlim_n L^n$ cannot be written as a filtered colimit of direct sums of objects from $\mathbf{V}$. 
\end{Example}
The converse to Theorem \ref{T:schappi} is an earlier result of Hovey \cite[Prop.~1.4.4]{MR2066503}.
\begin{proposition}\label{P:schappi-converse}
  Let $f \colon Y\to X$ be a faithfully flat affine morphism of quasi-compact and quasi-separated algebraic stacks, where $Y$ is quasi-affine. Let
  $\Vect$ be a set of vector bundles on $X$. If
  $f_*\Orb_{Y}$ is a filtered colimit of duals of finite direct sums of objects of $\Vect$, then $X$ has the $\mathbf{V}$-resolution property.
\end{proposition}
\begin{proof}
  It suffices to prove that $\Vect$ generates $\QCoh(X)$. Write
  $f_*\Orb_Y = \colim_{\lambda\in \Lambda} H_\lambda^\vee$, where
  $H_\lambda=\bigoplus_{V\in J_\lambda} V^{\oplus n_{\lambda}(V)}$, where
  $J_\lambda \subseteq \Vect$ is a finite subset and $n_{\lambda}(V)$ is finite. Let $M \in \QCoh(X)$; then there are natural isomorphisms:
  \begin{align*}
    \Gamma(Y,f^*M) &= \Hom_{\Orb_X}(\Orb_X,f_*f^*M)\\
                   &\simeq \Hom_{\Orb_X}(\Orb_X,f_*\Orb_Y \otimes_{\Orb_X} M) && \mbox{($f$ is affine)}\\
                   &\simeq \Hom_{\Orb_X}(\Orb_X,\colim_\lambda H_\lambda^\vee \otimes_{\Orb_X} M)\\
                   &\simeq \colim_\lambda \Hom_{\Orb_X}(\Orb_X,H_\lambda^\vee \otimes_{\Orb_X} M)\\
                   &\simeq \colim_\lambda \Hom_{\Orb_X}(H_\lambda,M) 
  \end{align*}
  It follows that if $\phi \colon M \to N$ is a homomorphism of
  quasi-coherent $\Orb_X$-modules, then there is a commutative diagram:
  \[
    \xymatrix{\Gamma(Y,f^*M) \ar[r]^-{\simeq} \ar[d]_{\Gamma(Y,f^*\phi)} & \colim_\lambda \Hom_{\Orb_X}(H_\lambda,M) \ar[d]^{\colim_\lambda \Hom_{\Orb_X}(H_\lambda,\phi)}\\
      \Gamma(Y,f^*N) \ar[r]^-{\simeq} & \colim_\lambda
      \Hom_{\Orb_X}(H_\lambda,N).}
  \]
  In particular, if
  $\Hom_{\Orb_X}(V,\phi) \colon \Hom_{\Orb_X}(V,M) \to
  \Hom_{\Orb_X}(V,M)$ is the zero map for all $V \in \mathbf{V}$, then
  $\Gamma(Y,f^*\phi)$ is the zero map. Since $Y$ is quasi-affine,
  $f^*\phi = 0$. But $f$ is faithfully flat, so $\phi = 0$. That is,
  $\mathbf{V}$ generates $\QCoh(X)$.
\end{proof}
Let $S$ be an integral quasi-compact quasi-separated algebraic stack
with generic point $\xi$. We say that a quasi-coherent $\Orb_S$-module
$F$ is \emph{torsion free} if $\ker(F \to (i_{\xi})_*i_\xi^*F) = 0$,
where $i_\xi \colon S_\xi \hookrightarrow S$ is the generic residual gerbe
\cite[Thm.~B.2]{MR2774654}. We record here the following folklore
result.
\begin{corollary}\label{C:res-property-dim1}
  Let $T$ be an integral quasi-compact algebraic stack with affine
  diagonal. If every torsion free quasi-coherent $\Orb_T$-module of
  finite type is a vector bundle, then $T$ has the resolution
  property.
\end{corollary}
\begin{proof}
  Since $T$ is quasi-compact with affine diagonal, there is a smooth
  covering $\tau \colon \spec A \to T$ and $\tau$ is affine. We
  may write $\tau_*\Orb_{\spec A} =\varinjlim_\lambda N_\lambda$,
  where the $N_\lambda$ are finite type quasi-coherent
  $\Orb_T$-submodules of $\tau_*\Orb_{\spec A}$ \cite{rydh-2014}.
  Then the $N_\lambda$ are torsion free, so are all vector bundles. By
  Proposition \ref{P:schappi-converse}, the result follows.
\end{proof}
\begin{remark}\label{R:prufer-cover}
  Corollary \ref{C:res-property-dim1} gives a simple generalization of
  \cite[1.4.5]{MR756316}. Indeed, if $T$ is an integral quasi-compact
  quasi-separated algebraic stack that admits a faithfully flat cover
  by a finite disjoint union of spectra of Dedekind or Pr\"ufer
  domains (i.e., every finitely generated ideal is invertible), then
  every torsion free quasi-coherent $\Orb_T$-module of finite type is
  a vector bundle \cite[Thm.~1]{MR46349}.
\end{remark}
\section{Flags} \label{sec:flag} The goal of this article is to construct
interesting vector bundles on algebraic stacks. The main
problem---even for schemes---is that vector bundles
are glued from local data. The key idea in this paper is to add the
additional structure of a flag to a vector bundle. This ends up being surprisingly useful.

Let $Y$ be an algebraic stack. Let
$V_\bullet$ be a quasi-coherent $\Orb_{Y}$-module $V$ with a finite filtration:
\[
  0 = V_{0} \subseteq V_1 \subseteq \cdots \subseteq V_{n-1} \subseteq V_n = V.
\]
We call $n$ the \emph{length} of the filtration. Recall that
$V_\bullet$ is a \emph{flag} if the graded pieces
$\gr_i(V_\bullet)=V_{i}/V_{i-1}$ are vector bundles for all $i=1$,
$\dots$, $n$. Note that this condition implies that the $V_i$ are
also vector bundles. A flag is \emph{complete} if the graded pieces
are line bundles. We say that $V_\bullet$ is \emph{trivially graded}
if $\gr_i(V_{\bullet})$ is a trivial vector bundle for all $i$. Note that a vector bundle which admits the structure of a trivially graded flag is sometimes referred to as a unipotent vector bundle in the literature (see \cite[Sec. 1]{MR318151} and \cite[Def. 4.5]{MR498572}). We now
have the following key definition of the paper.
\begin{definition}
  Let $f\colon X \to S$ be a morphism of algebraic stacks. Let
  $V_\bullet$ be a flag on $X$. We say that $V_\bullet$ is
  \emph{$f$-graded} (or \emph{$S$-graded}) if for each $i$ there is a
  vector bundle $E_i$ on $S$ and an isomorphism
  $\phi_i \colon \gr_i(V_\bullet)\simeq f^*E_i$. If the vector bundle $V$ is
   $f$-(R-)faithful, then we say the same of the flag $V_\bullet$.
\end{definition}
Note that trivially graded flags are graded by every morphism.
Remarkably, we can establish that graded flags often descend (Theorem
\ref{T:descent-flags}). We are not aware of any related result in the
literature. We begin, however, with the universal example.

\begin{Example}\label{E:unipotent-triv-bundle}
  Let $S$ be an algebraic stack. Fix an ordered sequence of $n$ line bundles
  $\mathscr{L}=(L_1,\dots,L_n)$ on $S$. Define a category fibered in
  groupoids $\FLAG_{\mathscr{L}} \to \SCH{S}$ as follows: its
  objects over $f \colon X \to S$ are pairs $(V_\bullet, \{ \phi_i\}_{i=1}^n)$,
  where
  \begin{enumerate}
  \item $V_\bullet$ is a flag of length $n$ and
  \item $\phi_i\colon \gr_i(V_\bullet) \simeq f^*L_i$ are isomorphisms
    for $i=1$, $\dots$, $n$.
  \end{enumerate}
  That is, an $X$-point of $\FLAG_{\mathscr{L}}$ is an $S$-graded flag
  on $X$ whose graded pieces are isomorphic to the $f^*L_i$. A morphism
  in $\FLAG_{\mathscr{L}}$ is an isomorphism of vector bundles that is
  compatible with the filtrations and isomorphisms to the $L_i$.
  Note that there is a distinguished object in
  $\FLAG_{\mathscr{L}}$ defined over $S$:
  \[
    \ell=(L_1 \subseteq \cdots \subseteq \bigoplus_{j=1}^{n-1} L_j \subseteq \bigoplus_{j=1}^n L_j,\{\text{id}_{L_i}\}_{i=1}^n).
  \] 
  Every other object in $\FLAG_{\mathscr{L}}$ becomes isomorphic to
  $\ell$ after passing to a smooth covering. In particular,
  $\FLAG_{\mathscr{L}}$ is a gerbe and the section defined by $\ell$
  induces an equivalence
  $\FLAG_{\mathscr{L}} \simeq \classB\U(\mathscr{L})$, where
  $\U(\mathscr{L})=\Aut(\ell)$. Clearly,
  $\U(\mathscr{L}) \subseteq \GL(\bigoplus_{j=1}^n L_j)$. Moreover, the following diagram $2$-commutes:
  \[
    \xymatrix{X \ar[r]^{\Fr(V)} \ar[d]_-{(V_\bullet, \{\phi_i\})} & \classB \GL_{n,S}\\
    \classB \U(\mathscr{L}) \ar[r] & \classB \GL(\bigoplus_{j=1}^n L_j) \ar[u]^-{\rotatebox{90}{$\sim$}}_{\Fr(\bigoplus_j L_j)}.}
  \]
  If
  $\mathscr{L}=(\mathcal{O}_S,...,\mathcal{O}_S)$, then there is a
  natural identification
  \[\Aut(\ell) \simeq \U_{n,S}.\]
  Thus for any choice of $\mathscr{L}$, $\Aut(\ell)$ is locally
  isomorphic to $\U_{n,S}$. In particular, $\U(\mathscr{L}) \to S$ is a flat group of finite presentation and it follows from Proposition
  \ref{P:hom} that
  $\GL(\bigoplus_{j=1}^n L_j)/\U(\mathscr{L}) \to S$ is quasi-affine,
  and $\U(\mathscr{L}) \to S$ is R-embeddable. Hence,
  $\classB\U(\mathscr{L})$ is an algebraic stack, the morphism
  $\classB\U(\mathscr{L}) \to S$ has the resolution property and the
  universal flag $F_\bullet(\mathscr{L})$ on $\classB \U(\mathscr{L})$
  is a $(\classB \U(\mathscr{L}) \to S)$-tensor generator. In
  particular, if $S$ has the resolution property, then so does
  $\classB \U(\mathscr{L})$.

  If $\B_{n,\Z}$ denotes the Borel subgroup of upper triangular
  matrices in $\GL_{n,\Z}$ and $\B_{n,\Z} \to \mathbf{G}_m^n$ is the
  induced quotient, then the following diagram is $2$-cartesian:
  \begin{equation}
    \vcenter{\xymatrix{\classB \U(\mathscr{L}) \ar[r] \ar[d] & \classB \B_{n,\Z} \ar[d]\\ S \ar[r]_{\mathscr{L}} & \classB \mathbf{G}_m^n.}}\label{eq:ubn}
  \end{equation}
\end{Example}

Let $f\colon X \to S$ be a
morphism of algebraic stacks and let $\textbf{Vect}(X)$ denote the set of isomorphism classes of vector bundles on $X$. We let
\[\Flag_{\mathcal{O}} \subseteq\Flag_f \subseteq\Flag\subseteq\textbf{Vect}(X)\]
denote the sets of isomorphism classes of vector
bundles that admit trivially graded complete flags, complete $f$-graded flags, and complete flags respectively.

\begin{Example} \label{ex:polynomial} Standard arguments show that  $\Flag$,
$\Flag_f$, and $\Flag_{\mathcal{O}}$ are stable under finite direct sums, the taking of duals, finite tensor products, and extensions. 
\end{Example} 

\begin{Example}\label{E:flag-resolution}
  Let $X$ be an algebraic stack. If
  \begin{enumerate}
  \item $X$ is affine; or 
  \item $X$ is quasi-affine; or
  \item $X$ is quasi-projective over affine; or
  \item $X$ admits an ample family of line bundles; or 
  \item $\QCoh(X)$ is generated by a set of line bundles (e.g.,
    $X=\classB \mathbf{G}_{m, \mathbf{Z}}^n$); or
  \item $X=\classB \U_{n,k}$, where $k$ is a field;
  \end{enumerate} 
  then $X$ has the $\Flag$-resolution property. These assertions are
  trivial. If $X$ has the $\Flag$-resolution property, then
  Sch\"appi's Theorem (Theorem \ref{T:schappi}) implies that if
  $p \colon \Spec A \to X$ is flat, then
  $p_*\Orb_{\Spec A} \simeq \varinjlim_{\lambda} F_\lambda$, where the
  $F_\lambda$ are complete flags on $X$. Note that
  $\classB \GL_{n,\Z}$ does not have the $\Flag$-resolution property
  if $n>1$.
\end{Example}
The following result yields a useful sufficient condition
for the existence of a flag structure on a vector bundle. 
\begin{lemma} \label{L:pruferflag} Let $T$ be an integral
  quasi-compact quasi-separated algebraic stack with generic point
  $\eta \colon \spec k \to T$, where $k$ is a field. Let $\gamma \colon G \to T$ be
  a flat group of finite presentation and let
  $f \colon \classB G \to T$ be the induced morphism. Let $V$ be a
  vector bundle on $\classB G$.
  \begin{enumerate}
  \item \label{LI:pruferflag:adjunction} If the adjunction
    $z \colon f^*f_*V \to V$ is an isomorphism after restriction
    along $\eta$, then it is an isomorphism.
  \item \label{LI:pruferflag:flag} If every torsion free
    quasi-coherent $\Orb_T$-module of finite type is a vector bundle
    (Remark \ref{R:prufer-cover}) and $G_\eta$ is
    unipotent, then $V$ admits an $f$-graded flag.
  \end{enumerate}
\end{lemma}
\begin{proof}  
  By \cite[Thm.~B.2]{MR2774654}, $\eta$ factors as
  $\spec k \xrightarrow{\eta'} T_\eta \xrightarrow{i_\eta} T$, where
  $T_\eta$ is the residual gerbe and $\eta'$ is faithfully flat. For
  \eqref{LI:pruferflag:adjunction}: since $z_\eta$ is an isomorphism,
  $z_{T_\eta}$ is an isomorphism. Let $p\colon T \to \classB G$ be the
  usual section to $f$; by descent, $V$ is described by a group
  homomorphism $v \colon G \to \GL(p^*V)$. Note that $z$ is an isomorphism if
  and only if $\ker(v) = G$ as $z$ corresponds to the inclusion of
  $G$-invariants of $V$ into $V$. Now $\ker(v) \subseteq G$ is a
  closed immersion as $\GL(p^*V) \to T$ is separated and contains
  $\gamma^{-1}(T_\eta)$, which is dense since $\gamma$ is open.  Hence
  $\ker(v) = G$.

  We prove \eqref{LI:pruferflag:flag} by induction on $\rank (V)$, the
  case $\rank(V) = 0$ being trivial; so we now assume that
  $\rank(V) > 0$. If $z_{T_\eta}$ is an isomorphism, then
  \eqref{LI:pruferflag:adjunction} says that $z_V$ is an isomorphism
  and so we would be done. Otherwise, if $z_{T_\eta}$ is not an
  isomorphism, then $(f^*f_*V)_{T_\eta}$ is non-zero as after restriction along $\eta$ it is the
  inclusion of the invariants of a non-trivial representation of the
  unipotent group $G_{\eta}$ \cite[Thm.~XVII.3.5]{SGA3-III-NEW}. In
  particular, there is a non-trivial quotient $V_{T_\eta} \to W$ on
  $T_\eta$. Now define $\overline{W}$ to be the image of the
  composition $V \to (i_{T_\eta})_*V_{T_\eta} \to
  (i_{T_\eta})_*W$. Since the map $V \to (i_{T_\eta})_*W$ is non-zero,
  $\overline{W}$ is a non-zero, torsion free, quasi-coherent
  $\Orb_{\classB G}$-module of finite type, and so is a vector
  bundle. Also, $\overline{W}_{T_\eta} = W$ and so
  $\rank(\overline{W}) = \rank(W) < \rank(V)$.  We now have an
  exact sequence of vector bundles on $\classB G$:
  \[
    \xymatrix{0 \ar[r] & K \ar[r] & V \ar[r] & \overline{W} \ar[r] & 0.}
  \]
  The ranks of $K$ and $\overline{W}$ are less than that of $V$ and so
  by induction, they admit $f$-graded flags and consequently so does
  $V$.
\end{proof}
Using Lemma \ref{L:pruferflag}, we can now prove the following
Proposition.
\begin{proposition} \label{P:ulflag} Let $\mathscr{L}$ be a sequence
  of $n$ line bundles on an algebraic stack $S$.
  \begin{enumerate}
  \item \label{PI:ulflag:flagf} The morphism $f\colon \classB \U(\mathscr{L})\to S$ has the
    $\Flag_f$-resolution property.
\item \label{PI:ulflag:flagO}If $S$ is affine, then $f$ has the $\Flag_{\mathcal{O}}$-resolution property.
\end{enumerate}
\end{proposition}
\begin{proof}
  It suffices to show \eqref{PI:ulflag:flagf} universally
  \cite[Prop. 2.8(iv)]{2013arXiv1306.5418G}. Hence, we may suppose
  $f\colon \classB\U(\mathscr{L}) \to \classB\G_{m,\Z}^n$, where
  $\mathscr{L}$ is the universal sequence of $n$ line bundles. Since
  $\classB \B_{n,\Z} \simeq B\U(\mathscr{L})$ has the resolution
  property (Example \ref{E:unipotent-triv-bundle}), it suffices to
  show that every vector bundle $V$ is a complete $f$-graded
  flag. Since $\spec \Z \to \classB \G_{m,\Z}^n$ is a smooth covering,
  Lemma \ref{L:pruferflag} and Remark \ref{R:prufer-cover} imply that
  $V$ admits an $f$-graded flag. But every vector bundle on
  $\classB\G_{m,\Z}^n$ is a direct sum of line bundles, so
  \eqref{PI:ulflag:flagf} follows from Example \ref{ex:polynomial}.

  For \eqref{PI:ulflag:flagO}, it suffices to show that the vector bundle $V$ underlying every $f$-graded complete flag
  $V_{\bullet}$ is a direct summand of a vector bundle $W$ which admits a trivially graded complete flag structure. We proceed
  by induction on $n$, the length of $V_\bullet$. If $n=0$, then the
  result is trivial. If $n>0$, then we may write $V$ as an
  extension
  \[
    \xymatrix{0 \ar[r] & V_{n-1} \ar[r] & V \ar[r] & f^*L
      \ar[r] & 0.}
  \] 
  for some line bundle $L$ on $S$. Since $S$ is affine, there is a
  split surjection
  $\gamma \colon \Orb_S^{\oplus m} \twoheadrightarrow L$. Then $f^*\gamma \colon \Orb^{\oplus m}_{\classB \U(\mathscr{L})} \twoheadrightarrow f^*L$ is a split surjection. Now pull the extension back along $f^*\gamma$, so we may replace $f^*L$ with a
  trivial bundle of rank $m$. By induction, there is a split
  surjection $\delta \colon U \twoheadrightarrow V_{n-1}$, where $U$
  admits a trivially graded complete flag. Pushing forward along a section of $\delta$,
  $V$ is a direct summand of a vector bundle that is an
  extension of trivially graded complete flags. Now apply Example
  \ref{ex:polynomial}. \end{proof}

The following concept will be useful.
A \emph{refinement} of a flag $V_\bullet$ of length
$n$ is another flag 
$V_0' \subseteq \cdots \subseteq V_m' = V$ of length $m\geq n$
together with a strictly order preserving function
$\rho \colon \{0<1<\cdots<n\} \to \{0< 1 < \cdots < m\}$ such that
$V_i = V'_{\rho(i)}$ for all $i$. 

\begin{Example}\label{E:complete-refinement}
  Let $Y$ be an algebraic stack. Let $V_\bullet$ be a flag on
  $Y$. Suppose that $\gr_i(V_\bullet)$ admits the structure of a complete flag for each
  $i\geq 0$; then $V_\bullet$ admits a complete refinement. To see
  this, use Example \ref{ex:polynomial}. \end{Example}

\begin{Theorem}\label{T:descent-flags}
  Consider a $2$-cartesian diagram of algebraic stacks:
  \[
    \xymatrix{X' \ar[r]^q \ar[d]_{f'} & X \ar[d]^{f}\\ S' \ar[r]^p & S.}
  \]
  Assume that
  \begin{enumerate}
  \item $S$ is quasi-compact with affine diagonal and has the resolution property;
  \item $f$ is quasi-compact and quasi-separated;  
  \item $p$ is faithfully flat and $S'$ is affine.
  \end{enumerate}
  If $X'$ admits a $q$-graded flag $V_\bullet$ of length $n$, then $X$
  admits a flag $W_\bullet$ of length $n$ and a degree preserving
  split surjection $q^*W_\bullet \twoheadrightarrow
  V_\bullet$. Moreover,
  \begin{enumerate}[label=(\alph*)]
  \item \label{TI:descent-flags:faithful} if
    $V_\bullet$ is $f'$-(R-)faithful, then $W_\bullet$ is
    $f$-(R-)faithful. 
  \item \label{TI:descent-flags:explicit} If
    $p_*\Orb_{S'}$ is a filtered colimit of vector bundles $F_\lambda$ on $X$ (see Theorem \ref{T:schappi}) and
    $\gr_i(V_\bullet) \simeq q^*E_i$, then there exists $\lambda_i$
    with
    $\gr_i(W_\bullet) \simeq f^*F_{\lambda_i}\tensor_{\Orb_X} E_i$.
  \item \label{TI:descent-flags:trivial} If $V_\bullet$ is
    $fq$-graded, then $W_\bullet$ is $f$-graded.
  \item \label{TI:descent-flags:complete} Assume that $S$ has the
    $\Flag$-resolution property.
    \begin{enumerate}[label=(\roman*)]
    \item If $V_\bullet$ is complete, then
      $W_\bullet$ admits a complete refinement.
    \item If $V_\bullet$ is $fq$-graded, then $W_\bullet$ admits a
      complete $f$-graded refinement.
  \end{enumerate}
\end{enumerate}
\end{Theorem}
\begin{proof}
  Claim \ref{TI:descent-flags:faithful} follows from Proposition
  \ref{prop:framebundle}\eqref{prop:framebundle:sum}, the main claim, and \cite[Prop. 2.8(iii)]{2013arXiv1306.5418G}. Claim
  \ref{TI:descent-flags:trivial} follows from
  \ref{TI:descent-flags:explicit}. Claim
  \ref{TI:descent-flags:complete} follows from
  \ref{TI:descent-flags:explicit} and Examples
  \ref{E:complete-refinement} and \ref{E:flag-resolution}. Claim
  \ref{TI:descent-flags:explicit} will follow from the construction of
  $W_\bullet$, which we prove by induction on $n\geq 0$. The base case is trivial so we assume $n \geq 1$. Choose a
  vector bundle $E$ on $X$ such that $V_1=q^*E$, then $(V/V_1)_\bullet$ is a
  $q$-graded flag of length $n-1$. By induction, there is a flag
  $W_\bullet$ of length $n-1$ on $X$ and a split surjection
  $q^*W_\bullet \to (V/V_1)_\bullet$ as in the statement of the theorem. We now pull back the defining
  short exact sequence
  \[
    \xymatrix{ 0 \ar[r] & q^*E \ar[r] & V \ar[r] & V/q^*E \ar[r] & 0}
  \]
  along the surjection $q^*W \twoheadrightarrow V/q^*E$. This results in a
  commutative diagram with exact rows, whose vertical arrows are
  easily checked to be split surjective:
  \[
    \xymatrix{ 0 \ar[r] & q^*E \ar@{=}[d] \ar[r] & V' \ar[r]
      \ar@{->>}[d]& q^*W \ar@{->>}[d] \ar[r] & 0  \\ 0 \ar[r] &
      q^*E \ar[r] & V \ar[r] & V/q^*E \ar[r] & 0.}
  \]
  Thus, we may replace $V$ by $V'$ and assume that
  $(V/V_1)_\bullet=q^*W_\bullet$. 

  Since
  $S'$ is affine and $S$ has affine diagonal, it follows that $p$ and
  $q$ are affine and faithfully flat morphisms. It follows that
  we may push forward the top row in the above diagram to obtain an
  exact sequence of quasi-coherent sheaves on $X$:
  \[
    \xymatrix{0 \ar[r] & q_*q^*E \ar[r] & q_*V'
      \ar[r] & q_*q^*W \ar[r] & 0.}
  \]
  Pulling this sequence back along the injective adjunction
  $W \hookrightarrow q_*q^*W$, we obtain a diagram with exact rows:
  \[
    \xymatrix{0 \ar[r] & q_*q^*E \ar@{=}[d] \ar[r] &
      W' \ar[r] \ar@{^(->}[d] & W \ar@{^(->}[d] \ar[r] & 0 \\0 \ar[r] &
      q_*q^*E \ar[r] & q_*V' \ar[r] &
      q_*q^*W \ar[r] & 0.}
  \]
  Since $E$ is a vector bundle and $q$ is affine, the projection formula and flat base
  change produce natural isomorphisms:
  \[
    q_*q^*E \simeq q_*\Orb_{X'}
    \tensor_{\Orb_{X}} E \simeq (f^*p_*\Orb_{S'}) \tensor_{\Orb_{X}} E.
  \]
  Assume we are given that
  $p_*\Orb_{S'} \simeq \varinjlim_\lambda F_\lambda$, where the
  $F_\lambda$ are vector bundles, or we produce such a description by
  Sch\"appi's Theorem (Theorem \ref{T:schappi}). Since the adjunction
  $p^*p_*\Orb_{S'} \twoheadrightarrow \Orb_{S'}$ is split surjective,
  taking $\lambda$ sufficiently large, we can assume that the induced
  morphisms $p^*F_\lambda \to \Orb_{S'}$ are also split
  surjective. But we also have natural isomorphisms:
  \begin{align*}
    \Ext^1_{\Orb_{X}}(W,q_*q^*E) &\simeq \H^1(X,W^\vee \tensor_{\Orb_{X}} q_*q^*E)\\
                                 &\simeq \H^1(X, \varinjlim_\lambda (f^*F_\lambda \tensor_{\Orb_{X}} W^\vee \tensor_{\Orb_{X}} E))\\
                                 &\simeq \varinjlim_\lambda \H^1(X,f^*F_\lambda \tensor_{\Orb_{X}} W^\vee \tensor_{\Orb_{X}} E) &&\mbox{\cite[Lem.~1.2(iii)]{perfect_complexes_stacks}}\\
                                 &\simeq \varinjlim_\lambda \Ext^1_{\Orb_{X}}(W,f^*F_\lambda \tensor_{\Orb_{X}} E). 
  \end{align*}
  Thus, the
  extension $W'$ is the push forward of an extension:
  \[
    \xymatrix{0 \ar[r] & f^*F_\lambda \tensor_{\Orb_{X}} E\ar[r] & W'_\lambda \ar[r] & W \ar[r] & 0}
  \]
  along the morphism
  $f^*F_\lambda \tensor_{\Orb_{X}} E \to f^*(p_*\Orb_Y)
  \tensor_{\Orb_{X}} E \simeq q_*q^*E$. Pulling this
  exact sequence back along $q$, we obtain an exact sequence:
  \[
    \xymatrix{0 \ar[r] & q^*f^*F_\lambda \tensor_{\Orb_{X'}} q^*E\ar[r] & q^*W'_\lambda \ar[r] & q^*W \ar[r] & 0}
  \]
  But the push forward of this extension along the split surjection
  $q^*f^*F_\lambda \tensor_{\Orb_{X'}} q^*E
  \twoheadrightarrow q^*E$ is clearly isomorphic to $V'$ and the
  resulting morphism $q^*W'_\lambda \twoheadrightarrow V'$ is split
  surjective. The result follows.
\end{proof}
\section{Unipotent morphisms and groups}\label{sec:unipotent-morphisms-groups}

\subsection{Unipotent morphisms}
Motivated by the results of the previous section, we make the following definition.
\begin{Definition} \label{def:unipotent} Let $f \colon X \to S$ be a
  morphism of algebraic stacks. Then $f$ is:
  \begin{itemize}
  \item \emph{(R-)unipotent} if $X$ admits a
    complete $f$-graded $f$-\mbox{(R-)}faithful flag;
  \item \emph{locally (R-)unipotent} if there is an fpqc covering
    $S' \to S$ \cite[Tag \spref{022B}]{stacks-project} such that
    $f\times_S S'$ is (R-)unipotent; and
  \item \emph{geometrically (R-)unipotent} if for every algebraically closed field $k$ and
    morphism $\Spec k \to S$, the induced morphism
    $X\times_S \Spec k \to \Spec k$ is (R-)unipotent.
  \end{itemize}
  Locally R-unipotent and unipotent morphisms have affine and
  quasi-affine diagonals respectively.  We have the following
  sequence of implications:
  \[
    \xymatrix@R+0.5pc@C+2pc{\mbox{R-unipotence}\ar@{=>}[r]
      \ar@{=>}[dd] & \mbox{local R-unipotence}
      \ar@/_1.75pc/@{==>}[l]_{\mbox{\ref{T:flat-local-unipotent-morphism}}}
      \ar@{=>}[r] \ar@{=>}[dd] & \mbox{geometric R-unipotence} \ar@{=>}[dd]  \ar@/_1.75pc/@{==>}[l]_{\mbox{\ref{E:char0-affine}}}\\
      & & \\ \mbox{unipotence}
      \ar@/^1.5pc/@{==>}[uu]^{\mbox{\ref{R:unipotent-group-thesame}}}
      \ar@{=>}[r]&
      \ar@/^1.75pc/@{==>}[l]^{\mbox{\ref{T:flat-local-unipotent-morphism}}}
      \mbox{local unipotence}
      \ar@/^1.5pc/@{==>}[uu]^{\mbox{\ref{R:unipotent-group-thesame}}}
      \ar@{=>}[r] &
      \ar@/^1.75pc/@{==>}[l]^{\mbox{\ref{E:examples-r-unipotence}}}
      \ar@/_1.5pc/@{==>}[uu]_{\mbox{\ref{R:unipotent-group-thesame}}}\mbox{geometric
        unipotence}}
  \]
  The solid
  arrows follow from the definitions. The dashed arrows are partial
  converses. The dashed vertical implications are valid only for certain groups over normal bases. The dashed
  horizontal implications are valid only in special cases.
\end{Definition}
Quasi-compact and quasi-separated representable morphisms are always unipotent. There are two
prototypical examples of R-unipotent morphisms:
\begin{enumerate}
\item $\classB \U(\mathscr{L}) \to S$ (Example
  \ref{E:unipotent-triv-bundle}), and
\item quasi-affine morphisms $X\to S$.
\end{enumerate}
We will return to these examples often. 
We begin this section with the
following two trivial lemmas.
\begin{lemma}\label{L:basechange-unipotent}
  Let $f \colon X \to S$ be a (R-)unipotent morphism of algebraic
  stacks. If $S' \to S$ is a morphism of algebraic stacks, then
  $f\times_S S'$ is (R-)unipotent. Moreover, the same holds for the
  local and geometric versions.
\end{lemma}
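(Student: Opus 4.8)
The plan is to reduce the whole statement to three routine facts---that quasi-separated, representable, and quasi-affine morphisms are each stable under base change---together with the observation that the frame bundle construction commutes with base change. Since (R-)unipotence of $f\colon X\to S$ is witnessed by a single complete $f$-graded $f$-(R-)faithful flag on $X$, and the local and geometric versions are obtained by pulling $f$ back along appropriate morphisms to $S$, everything should come down to chasing fiber products.

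First I would treat the base-change claim itself. Form the $2$-cartesian square with projections $q\colon X'=X\times_S S'\to X$ and $f'\colon X'\to S'$, so that $f\circ q = p\circ f'$. Quasi-separatedness of $f'$ is immediate. Let $V_\bullet$ be a witnessing flag on $X$ and set $W_\bullet:=q^*V_\bullet$; as $q$ is flat this is again a flag, and $\gr_i(W_\bullet)\simeq q^*\gr_i(V_\bullet)\simeq q^*f^*E_i\simeq (f')^*p^*E_i$, so $W_\bullet$ is $f'$-graded, and it is complete since the pullback of a line bundle is a line bundle. For $f'$-(R-)faithfulness I would use the canonical identification of frame bundles $\Fr(q^*V)\simeq\Fr(V)\times_X X'\simeq\Fr(V)\times_S S'$, which identifies $\Fr(q^*V)\to S'$ with the base change along $p$ of $\Fr(V)\to S$. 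Since $\Fr(V)\to S$ is representable (resp., by Theorem \ref{thm:totgross}\eqref{thm:totgross:frame}, quasi-affine), the same holds for $\Fr(q^*V)\to S'$, so $W_\bullet$ is $f'$-faithful (resp., $f'$-R-faithful) and $f'$ is (R-)unipotent.

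For the local version, suppose $T\to S$ is an fpqc covering with $f\times_S T$ (R-)unipotent. Set $T':=T\times_S S'$; this is an fpqc covering of $S'$, and a fiber-product chase gives a canonical isomorphism $f'\times_{S'}T'\simeq (f\times_S T)\times_T T'$, which is (R-)unipotent by the base-change case already established; hence $f'$ is locally (R-)unipotent. For the geometric version, given an algebraically closed field $k$ and a morphism $\Spec k\to S'$, composition with $p$ gives $\Spec k\to S$ and an identification $X'\times_{S'}\Spec k\simeq X\times_S\Spec k$, which is (R-)unipotent over $\Spec k$ by hypothesis; hence $f'$ is geometrically (R-)unipotent. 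I do not expect a genuine obstacle here: the only input beyond formal manipulation of fiber products is the base-change stability of $f$-(R-)faithfulness, which itself reduces to the compatibility of the $\underline{\Isom}$-construction with base change and the stability of representable and quasi-affine morphisms---consistent with the lemma being billed as trivial.
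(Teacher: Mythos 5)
Your proof is correct and is exactly the argument the paper has in mind; the paper omits it entirely, labelling the lemma ``trivial.'' One small correction to your justification: the projection $q\colon X\times_S S'\to X$ is \emph{not} flat for an arbitrary morphism $S'\to S$ (it is the base change of $S'\to S$ along $f$), so you cannot invoke flatness of $q$ to see that $q^*V_\bullet$ is again a flag. The correct reason is that each inclusion $V_i\subseteq V_{i+1}$ has locally free cokernel, hence is locally split, so the filtration pulls back to a filtration with locally free graded pieces along \emph{any} morphism. With that substitution everything else you wrote --- the identification $\Fr(q^*V)\simeq\Fr(V)\times_S S'$, base-change stability of representable and quasi-affine morphisms, and the fiber-product bookkeeping for the local and geometric variants --- goes through verbatim.
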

\begin{lemma}\label{L:restriction-flag}
  Let $X \xrightarrow{h} Y \xrightarrow{g} S$ be 
  morphisms of algebraic stacks.
  \begin{enumerate}
  \item\label{LI:restriction-flag:uni} If $g$ is unipotent and $h$ is quasi-compact, quasi-separated and representable; then
    $g\circ h$ is unipotent.
  \item\label{LI:restriction-flag:Runi} If $g$ is R-unipotent and $h$
    is quasi-affine, then $g\circ h$ is R-unipotent.
  \end{enumerate}
  Moreover, the analogous properties hold for the local and geometric
  versions.
\end{lemma}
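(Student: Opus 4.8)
The plan is to build the required witnessing flag on $X$ by pulling back a witnessing flag from $Y$ along $h$, and then to check that the relevant faithfulness hypothesis is inherited. Concretely: since $g$ is (R-)unipotent, fix a complete $g$-graded $g$-(R-)faithful flag $V_\bullet$ on $Y$, say with $\gr_i(V_\bullet)\simeq g^*L_i$ for line bundles $L_i$ on $S$. I claim $h^*V_\bullet$ does the job on $X$. Because each $\gr_i(V_\bullet)$ is locally free, applying $h^*$ to the short exact sequences $0\to V_i\to V_{i+1}\to\gr_i(V_\bullet)\to 0$ leaves them exact, so $h^*V_\bullet$ is again a flag with $\gr_i(h^*V_\bullet)\simeq h^*g^*L_i=(g\circ h)^*L_i$; thus $h^*V_\bullet$ is a complete $(g\circ h)$-graded flag, and $g\circ h$ is quasi-separated as a composite of quasi-separated morphisms. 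Everything therefore reduces to the (R-)faithfulness of $h^*V_\bullet$.

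For \eqref{LI:restriction-flag:uni}, I must show the underlying bundle $h^*V$ is $(g\circ h)$-faithful, i.e.\ that the relative automorphism group $\Aut_{X/S}(x)$ acts faithfully on $(h^*V)_x=V_{h(x)}$ for every geometric point $x$ of $X$. Because $h$ is representable, $\Aut_{X/Y}(x)$ is trivial, so the canonical homomorphism $\Aut_{X/S}(x)\to\Aut_{Y/S}(h(x))$ is a monomorphism, and the $\Aut_{X/S}(x)$-action on $(h^*V)_x$ is obtained by restricting along it the action of $\Aut_{Y/S}(h(x))$ on $V_{h(x)}$, which is faithful since $V$ is $g$-faithful. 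A subgroup of a faithfully acting group acts faithfully, so $h^*V_\bullet$ witnesses the unipotence of $g\circ h$.

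For \eqref{LI:restriction-flag:Runi}, recall that an R-unipotent morphism has affine diagonal, so $g$ has affine stabilizers and Theorem~\ref{thm:totgross}\eqref{thm:totgross:frame} identifies ``$V$ is $g$-R-faithful'' with ``$\Frame(V)\to S$ is quasi-affine''. Now $\Frame(h^*V)\simeq\Frame(V)\times_Y X$, so $\Frame(h^*V)\to\Frame(V)$ is a base change of the quasi-affine morphism $h$, hence quasi-affine; as quasi-affine morphisms are stable under composition, $\Frame(h^*V)\to S$ is quasi-affine. Moreover $g\circ h$ itself has affine diagonal — $\Delta_{g\circ h}$ factors as the closed immersion $\Delta_h$ (since $h$ is quasi-affine, hence separated) followed by a base change of the affine morphism $\Delta_g$ — hence affine stabilizers, so Theorem~\ref{thm:totgross}\eqref{thm:totgross:frame} applies in the reverse direction and shows $h^*V$ is a $(g\circ h)$-tensor generator. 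Thus $h^*V_\bullet$ witnesses the R-unipotence of $g\circ h$. For the local and geometric versions I would base change the whole tower $X\xrightarrow{h}Y\xrightarrow{g}S$ along the relevant $S'\to S$ — an fpqc cover over which $g$ becomes (R-)unipotent, or $S'=\Spec k$ for an algebraically closed field $k$ — noting that this keeps $h$ representable (resp.\ quasi-affine) and identifies $(g\circ h)\times_S S'$ with the composite of the base-changed maps, so the absolute cases just proven apply.

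I do not expect a genuine obstacle here — this is one of the two ``trivial lemmas'' opening the section — but two points merit care. In \eqref{LI:restriction-flag:uni}, the representability of $h$ is exactly what forces the stabilizer map $\Aut_{X/S}(x)\to\Aut_{Y/S}(h(x))$ to be a monomorphism, which is the crux of the faithfulness descent. In \eqref{LI:restriction-flag:Runi}, transferring R-faithfulness through $h^*$ really uses the frame-bundle criterion of Theorem~\ref{thm:totgross}, and to invoke that criterion for the composite $g\circ h$ one must first know that $g\circ h$ has affine stabilizers — which is why the argument bothers to trace through the diagonals rather than only the flag.
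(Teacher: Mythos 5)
Your argument is correct and is exactly the routine verification the paper intends: the lemma is stated without proof (as one of the ``two trivial lemmas''), and pulling back the witnessing flag along $h$, then transferring faithfulness via the monomorphism of relative stabilizers in case \eqref{LI:restriction-flag:uni} and via the identification $\Frame(h^*V)\simeq\Frame(V)\times_Y X$ together with stability of quasi-affineness under composition and base change in case \eqref{LI:restriction-flag:Runi}, is the standard route. Your care in checking that $g\circ h$ has affine stabilizers before applying Theorem~\ref{thm:totgross}\eqref{thm:totgross:frame} in the reverse direction is exactly the point worth not glossing over.
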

In general, unipotent morphisms are not stable under composition
(Example \ref{E:unstable-unipotent-composition}). We are optimistic
that locally unipotent morphisms have better stability properties under composition, but
this appears to be surprisingly subtle---even when $S$ is the spectrum
of a field, $Y$ is quasi-affine and $X$ is a gerbe over $Y$.

We now have the
following characterization of (R-)unipotent morphisms, which provides
a unipotent enrichment of the Totaro--Gross Theorem (Theorem
\ref{thm:totgross}).
\begin{Theorem}\label{T:char-Runi}
  Let $f\colon X \to S$ be a morphism of algebraic stacks with $S$
  quasi-compact. Then the following are equivalent.
  \begin{enumerate}
  \item\label{T:char-Runi:map} The morphism $f$ is R-unipotent.
  \item\label{T:char-Runi:factor} For some integer $n \geq 0$, there
    is an ordered sequence of $n$ line bundles $\mathscr{L}$ on $S$
    and factorization of $f$ as
    $X \xrightarrow{h} \classB \U(\mathscr{L}) \to S$, where $h$ is
    quasi-affine.
  \item\label{T:char-Runi:flag} The morphism $f$ has the $\Flag_f$-resolution
    property and the relative inertia stack $I_f \to X$ has affine
    fibers.
  \end{enumerate} 
  Moreover, $f$ is unipotent if and only there is a quasi-compact, quasi-separated and representable morphism $X \xrightarrow{h} \classB \U(\mathscr{L})$
  as in \eqref{T:char-Runi:factor}.
\end{Theorem}
\begin{proof}
  The statement regarding unipotent morphisms will follow from our
  arguments from the equivalence
  \eqref{T:char-Runi:map}$\Leftrightarrow$\eqref{T:char-Runi:factor}.

  We have \eqref{T:char-Runi:factor}$\Rightarrow$\eqref{T:char-Runi:map} by Lemma \ref{L:restriction-flag} and the
  R-unipotence of $\classB\U(\mathscr{L}) \to S$. As for
  \eqref{T:char-Runi:map}$\Rightarrow$\eqref{T:char-Runi:factor}: by
  definition, there is a complete $f$-graded flag $V_\bullet$ of
  length $n$ on $X$ that is $f$-(R-)faithful. That is, there is an
  ordered sequence of line bundles $\mathscr{L} = (L_1,\dots,L_n)$ on
  $S$ together with isomorphisms
  $\phi_i \colon \gr_i(V_\bullet) \simeq f^*L_i$ for all $i$. By
  Example \ref{E:unipotent-triv-bundle}, we obtain a morphism
  $X \xrightarrow{h} \classB \U(\mathscr{L})$. Post-composing this
  with the quasi-affine morphism
  $\classB \U(\mathscr{L}) \to \classB \GL(\bigoplus_{i=1}L_i) \simeq
  \classB \GL_{n,S}$ yields a quasi-affine morphism by Theorem \ref{thm:totgross}\eqref{thm:totgross:frame}, and the result follows from \cite[Tag 054G]{stacks-project}.
  
  For \eqref{T:char-Runi:factor}$\Rightarrow$\eqref{T:char-Runi:flag}: Proposition \ref{P:ulflag} and
  \cite[Prop.\ 2.8(v)]{2013arXiv1306.5418G} implies that $f$ has the
  $\Flag_f$-resolution property. Finally, for
  \eqref{T:char-Runi:flag}$\Rightarrow$\eqref{T:char-Runi:factor}: the
  Gross--Totaro Theorem (Theorem \ref{thm:totgross}) produces a
  $V_{\bullet} \in \Flag_f$ which is $f$-R-faithful.
\end{proof}
A remarkable consequence of Theorem \ref{T:descent-flags} is
that local (R-)unipotence implies (R-)unipotence on bases with the
$\Flag$-resolution property.
\begin{Theorem}\label{T:flat-local-unipotent-morphism}
  Let $f\colon X \to S$ be a locally unipotent (resp., locally R-unipotent)
  morphism of algebraic stacks. Let $S$ be quasi-compact with affine
  diagonal.
  \begin{enumerate}
  \item If $S$ has the resolution property, then $X$ is a global quotient
    (resp., has the resolution property).
  \item If $S$ has the $\Flag$-resolution property, then $f$ is unipotent (resp., R-unipotent). 
\end{enumerate}
\end{Theorem}
\begin{proof}
  By assumption, there is a faithfully flat cover $p \colon S' \to S$
  such that $f' \colon X\times_S S' \to S'$ admits a complete $f'$-graded
  $f'$-faithful (resp., $f'$-R-faithful) flag $V_\bullet$.  Passing to a smooth cover of $S'$,
  we may assume that $S'$ is affine and that the flag is trivially graded, and so $V_\bullet$ is
  $pf'$-graded. Now apply Theorem \ref{T:descent-flags} to:
  \[
    \xymatrix{X\times_S S' \ar[r]^-{q} \ar[d]_{f'} & X \ar[d]^f
        \\ S' \ar[r]_p & S.} 
  \]
  Then we obtain a flag $W_\bullet$ on $X$ and a split surjection
  $q^*W_\bullet \to V_\bullet$. Part \ref{TI:descent-flags:trivial}
  implies that $W_\bullet$ is $f$-graded. Part
  \ref{TI:descent-flags:faithful} says that $W_\bullet$ is
  $f$-faithful (resp., $f$-R-faithful), so $X$ is a global quotient (resp., has the resolution
  property). Part \ref{TI:descent-flags:complete} says that if $S$ has the
  $\Flag$-resolution property, then $W_\bullet$ admits a complete
  refinement, which is also $f$-graded. Hence, $f$ admits a complete
  $f$-graded $f$-(R-)faithful flag. That is, $f$ is unipotent (resp., R-unipotent).
\end{proof}
\begin{Example}\label{E:unstable-unipotent-composition}
  Let $S$ be a proper, normal surface over $\C$
  with no non-trivial line bundles
  \cite{schroer-non-proj-surfaces}. Let $Y \subseteq S$ be the inclusion of the regular
  locus, which is an open subscheme whose complement $S\setminus Y$
  is a finite set of closed points. Then $Y$ admits a non-trivial very ample line
  bundle $L$. Let $\ms U = \classB \U(\Orb,L^\vee) \cong BL$, then
  \begin{enumerate}
  \item \label{EI:unstable-unipotent-composition:ind}
    $\pi \colon \ms U \to Y$ and $j \colon Y \to S$ are
    R-unipotent;
  \item \label{EI:unstable-unipotent-composition:comp} the composition $p=j \circ \pi$ is locally R-unipotent but not
    unipotent; and
  \item \label{EI:unstable-unipotent-composition:flag-S} $S$ has the
    resolution property but not the $\Flag$-resolution property.
  \end{enumerate}
  Indeed, \eqref{EI:unstable-unipotent-composition:ind} is clear from
  Theorem \ref{T:char-Runi}. Also,
  \eqref{EI:unstable-unipotent-composition:flag-S} follows from
  \eqref{EI:unstable-unipotent-composition:comp} and Theorem
  \ref{T:flat-local-unipotent-morphism}.

  For \eqref{EI:unstable-unipotent-composition:comp}: to see that $p$
  is locally R-unipotent, it suffices to restrict to affine opens
  $\spec A \subseteq S$; then $Y_A = Y\cap \spec A$ is quasi-affine
  and so $L_{Y_A}^{\vee}$ is globally generated. Hence, there is a
  quasi-affine morphism $\ms U \to B\mathbf{G}_{a,Y_A}^n$ for some
  $n$; after composition with the open immersion
  $B\mathbf{G}_{a,Y_A}^n \subseteq B\mathbf{G}_{a,\spec A}^n$, we see
  that $p$ is locally R-unipotent by Lemma \ref{L:restriction-flag}\eqref{LI:restriction-flag:Runi}.

  If $p=j\circ \pi$ is unipotent, since $S$ has no non-trivial line
  bundles, $\ms U$ must admit a faithful trivially graded flag. Hence,
  it suffices to show that every trivially graded flag $V_\bullet$ on
  $\ms U$ is of the form $\pi^*W_{\bullet}$ for some trivially graded
  flag $W_\bullet$ on $Y$. A simple calculation shows that
  $\RDERF^1 \pi_*\Orb_{\ms U} \simeq L^\vee$. Note that $Y$ is not quasi-affine because $\H^0(Y,\mathcal{O}_Y)=\C$ \cite[Tag \spref{01P9}]{stacks-project}, and therefore $L$ is non-trivial  \cite[Tag \spref{01QE}]{stacks-project}. In turn, this implies $\H^0(Y,L^{\vee})=0$ because by composing with a non-zero section of $L$, a non-zero section $L^{\vee}$ would yield a trivialization of $L^{\vee}$. Therefore, since
  $L^{\vee}$ has no non-zero global sections, if $W_{\bullet}$ is a
  trivially graded flag $W_\bullet$ on $Y$, then
  $\Hom_{\Orb_{Y}}(W,L^\vee)=0$. Now let $V_\bullet$ be a trivially
  graded flag on $\ms U$. By induction on the length of $V_\bullet$,
  we may assume that $V_\bullet$ is an extension:
  \[
  e\colon  0 \to \pi^*\Orb_Y \to V_\bullet \to \pi^*W_\bullet \to 0,
  \]
  where $W_\bullet$ is a trivially graded flag on $Y$. There is also an exact sequence:
  \[
    0 \to \Ext^1_{\Orb_{Y}}(W,\Orb_Y) \to
    \Ext^1_{\Orb_{\ms U}}(\pi^*W,\pi^*\Orb_Y)
    \to \Hom_{\Orb_Y}(W,\RDERF^1\pi_*\Orb_{\ms U}).
  \]
  Since $\Hom_{\Orb_{Y}}(W,L^\vee)=0$, the first map is
  bijective and so our extension $e$ is pulled back from $Y$. 
\end{Example}
A useful application of Theorem
\ref{T:flat-local-unipotent-morphism} is the following.
\begin{corollary}\label{C:flat-local-implies-smooth-local-morphism}
  Let $f\colon X \to S$ be a locally (R-)unipotent
  morphism of algebraic stacks. If $S$ is quasi-compact, then there is a smooth surjection
  $S' \to S$ such that $f\times_S S'$ is (R-)unipotent.
\end{corollary}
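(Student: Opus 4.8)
The plan is to reduce immediately to Theorem~\ref{T:flat-local-unipotent-morphism} by replacing $S$ with an affine smooth atlas. Since an affine scheme is quasi-compact, has affine diagonal, and has the $\Flag$-resolution property (Example~\ref{E:flag-resolution}), part~(2) of that theorem will apply verbatim to the base-changed morphism.

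Concretely, first I would produce a smooth surjection $S' \to S$ with $S'$ \emph{affine}: because $S$ is a quasi-compact algebraic stack it admits a smooth surjection from a quasi-compact scheme $U$, and covering $U$ by finitely many affine opens and taking their disjoint union $S'$ yields an affine scheme (a finite coproduct of affines) together with a smooth surjection $S' \to U \to S$. Next I would observe that the base change $f'\colon X\times_S S' \to S'$ is quasi-compact (quasi-compactness is stable under base change) and locally (R-)unipotent (Lemma~\ref{L:basechange-unipotent}, which explicitly covers the local version). Since $S'$ is quasi-compact with affine diagonal and enjoys the $\Flag$-resolution property, Theorem~\ref{T:flat-local-unipotent-morphism}(2) gives that $f' = f\times_S S'$ is (R-)unipotent, which is exactly the assertion.

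There is no real obstacle here: the substance is entirely contained in Theorem~\ref{T:flat-local-unipotent-morphism}, and the corollary is the observation that its hypothesis ``$S$ has the $\Flag$-resolution property'' can always be arranged smooth-locally. The only points that need a word of care are (i) taking the atlas to be affine rather than merely a scheme, so that Example~\ref{E:flag-resolution} supplies the $\Flag$-resolution property, and (ii) invoking the ``local'' clause of Lemma~\ref{L:basechange-unipotent} to keep local (R-)unipotence after base change.
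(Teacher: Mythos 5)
Your proof is correct and is essentially the paper's own argument: pass to an affine smooth cover $S'\to S$, note that $S'$ has the $\Flag$-resolution property, and apply Theorem~\ref{T:flat-local-unipotent-morphism}(2) to the base change. The paper states this in two sentences; you have merely spelled out the same steps.
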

\begin{proof}
  Passing to a smooth cover of $S$, we may assume that $S$ is an
  affine scheme and so has the $\Flag$-resolution property. The result 
  follows from Theorem \ref{T:flat-local-unipotent-morphism}.
\end{proof}
\begin{corollary}\label{C:geometric-unipotent-surj-local}
  Let $f \colon X \to S$ be a morphism of algebraic
  stacks.
  \begin{enumerate}
  \item \label{CI:geometric-unipotent-surj-local:every-point}If $f$ is geometrically (R-)unipotent, then
    $f\times_S \spec k \colon X\times_S \spec k \to \spec k$ is
    (R-)unipotent for every field $k$ and morphism $\spec k \to S$.
  \item \label{CI:geometric-unipotent-surj-local:surj}  Let $S' \to S$ be surjective. If
  $f' \colon X\times_{S} S' \to S'$ is geometrically (R-)unipotent,
  then $f$ is geometrically (R-)unipotent.  
  \end{enumerate}
\end{corollary}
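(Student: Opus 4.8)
The plan is to reduce both statements to Theorem \ref{T:flat-local-unipotent-morphism}: in each case I will produce a faithfully flat, quasi-compact morphism onto $\spec k$ (hence an fpqc covering) from the spectrum of an algebraically closed field over which the morphism in question is already known to be (R-)unipotent, and then descend using the fact that an affine scheme is quasi-compact with affine diagonal and has the $\Flag$-resolution property (Example \ref{E:flag-resolution}).

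For \eqref{CI:geometric-unipotent-surj-local:every-point}, fix a morphism $\spec k \to S$ and choose an algebraic closure $\bar k \supseteq k$. Composing with $\spec \bar k \to \spec k$ and using that $f$ is geometrically (R-)unipotent, the base change $X \times_S \spec \bar k \simeq (X\times_S \spec k)\times_{\spec k} \spec \bar k \to \spec \bar k$ is (R-)unipotent. Since $f$ is quasi-compact, so is $X\times_S \spec k \to \spec k$; and since $\spec\bar k \to \spec k$ is an fpqc covering, this morphism is locally (R-)unipotent. As $\spec k$ is affine, Theorem \ref{T:flat-local-unipotent-morphism}(2) applies and shows $X\times_S\spec k \to \spec k$ is (R-)unipotent.

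For \eqref{CI:geometric-unipotent-surj-local:surj}, fix an algebraically closed field $k$ and a morphism $\spec k \to S$; we must check that $X\times_S \spec k \to \spec k$ is (R-)unipotent. Because $S'\to S$ is surjective, the fiber product $S'\times_S \spec k$ is a nonempty algebraic stack over $\spec k$, hence admits a morphism $\spec K \to S'\times_S \spec k$ for some field $K$; the composite $\spec K \to \spec k$ exhibits $K$ as an extension of $k$, and I pass to an algebraic closure $\bar K \supseteq K$. The morphism $\spec \bar K \to S'\times_S \spec k \to S'$ has algebraically closed source, so geometric (R-)unipotence of $f'$ gives that $(X\times_S S')\times_{S'}\spec \bar K \simeq X\times_S \spec\bar K \to \spec\bar K$ is (R-)unipotent. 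Arguing exactly as in \eqref{CI:geometric-unipotent-surj-local:every-point}, $X\times_S\spec k \to \spec k$ is quasi-compact and, via the fpqc covering $\spec\bar K \to \spec k$, locally (R-)unipotent, hence (R-)unipotent by Theorem \ref{T:flat-local-unipotent-morphism}(2). Since $k$ was an arbitrary algebraically closed field over $S$, $f$ is geometrically (R-)unipotent.

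The one point requiring care is the production of the field $K$ in \eqref{CI:geometric-unipotent-surj-local:surj}: one must translate the hypothesis that $S'\to S$ is surjective on points into the nonemptiness of $S'\times_S\spec k$ and then extract a field-valued point, being mindful that $S$ is only assumed to be an algebraic stack. This is routine, but it is the sole place where the surjectivity hypothesis enters; everything else is formal bookkeeping with base change together with a single application of Theorem \ref{T:flat-local-unipotent-morphism}.
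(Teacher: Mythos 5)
Your proof is correct and follows essentially the same route as the paper: reduce to a field extension $\bar K/k$ viewed as an fpqc covering, observe that $\spec k$ is affine and hence has the $\Flag$-resolution property, and conclude by Theorem \ref{T:flat-local-unipotent-morphism}. The only cosmetic difference is that the paper organizes the argument so that part (2) subsumes part (1), whereas you treat (1) directly first; the substance is identical.
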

\begin{proof}
  In \eqref{CI:geometric-unipotent-surj-local:surj}, we may assume that $S=\Spec k$ and $S'=\Spec k'$, where
  $k \subseteq k'$ is a field extension and $k'$ is algebraically closed. It suffices to prove that
  if $f'$ is (R-)unipotent, then $f$ is (R-)unipotent, which also implies \eqref{CI:geometric-unipotent-surj-local:every-point}. But $S' \to S$
  is an fpqc covering, so $f$ is locally (R-)unipotent. Since $S$ is
  affine, it has the $\Flag$-resolution property. The result now
  follows from Theorem \ref{T:flat-local-unipotent-morphism}.
\end{proof}

\subsection{Unipotent groups}\label{ss:unipotent-groups}
A powerful source of unipotent morphisms will be unipotent groups and
gerbes. Let $k$ be a field. Let $G$ be an algebraic group over
$k$. There are several characterizations of unipotence over $k$
\cite[Thm.~XVII.3.5]{SGA3-II-NEW}. We take the following: $G$ is
\emph{unipotent} if there is an embedding into the upper triangular
unipotent matrices $G \hookrightarrow \U_{n,k} \subset \GL_{n,k}$ for
some $n$. Families of unipotent groups over general bases are more
subtle, however, so we make the following definition.
\begin{Definition}\label{D:unipotent groups}
  Let $S$ be an algebraic stack. A group $G \to S$ that is flat and of finite presentation is said to be
  \emph{(R-)unipotent, locally (R-)unipotent, or geometrically
    (R-)unipotent} if the corresponding gerbe $\classB G \to S$ is so.
\end{Definition}
\begin{remark}
  We note some potential confusion with this terminology: unipotence of
  the group $G\to S$ is not the same thing as unipotence of the
  morphism $G\to S$. For instance, every quasi-compact, quasi-separated and
  representable morphism of algebraic stacks is unipotent, but an
  affine group scheme $G\to S$ need not be unipotent in the sense of
  Definition \ref{D:unipotent groups}.
\end{remark}
\begin{remark}\label{R:unipotent-group-char}
  A group $G\to S$ that is flat and of finite presentation is unipotent (resp. R-unipotent) if and only if there exists an
  ordered sequence of line bundles $\mathscr{L}$ on $S$, an object
  $V_{\bullet}$ of $\FLAG_{\mathscr{L}}(S)$ and a monomorphism (resp.\ with quasi-affine quotient):
  \[
    G \hookrightarrow \Aut_{\FLAG_{\mathscr{L}}}(V_{\bullet}) \subset \GL(V)
  \]
  Thus, (R-)unipotent groups are (R-)embeddable. Note that the above automorphism group is an inner form of
  $\U(\mathscr{L})$. In particular, if $S$ is a scheme, then $G$ can be embedded in a Zariski form of
  $\U_{n,S}$. Hence, if $S=\Spec k$ is the spectrum of a
  field, then $G\to\Spec k$ is unipotent (in the sense of Definition
  \ref{D:unipotent groups}) if and only if there exists an embedding
  $G\subseteq\U_{n,k}$ for some $n$, so both possible definitions of a unipotent group scheme agree over a field.
\end{remark}

\begin{Example}\label{R:unipotent-group-thesame}
  By Proposition \ref{P:hom}, the group $\U_{n,S} \to S$ is
  R-unipotent, as is any flat and finitely presented closed
  subgroup $H \subset \U_{n,S}$ when either $S$ is the spectrum of a field or a Dedekind domain, or $H \to S$ is finite. In particular, if $G \to S$ is (locally) unipotent and
  either $G \to S$ is finite or $S$ is the spectrum of a field, then
  it is (locally) R-unipotent.
\end{Example}

Now translating Theorem \ref{T:flat-local-unipotent-morphism} into groups
we obtain the following.
\begin{corollary}\label{C:flat-local-strong-unipotent}
  Let $S$ be a quasi-compact algebraic stack with affine diagonal. Let $G \to S$ be a locally (R-)unipotent group.
  \begin{enumerate}
  \item If $S$ has the resolution property, then $G \to S$ is (R-)embeddable.
  \item If $S$ has the $\Flag$-resolution property, then $G \to S$ is
    (R-)unipotent.
  \end{enumerate}
\end{corollary}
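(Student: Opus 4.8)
The plan is to deduce Corollary \ref{C:flat-local-strong-unipotent} directly from Theorem \ref{T:flat-local-unipotent-morphism} by unwinding Definition \ref{D:unipotent groups}. By definition, a group $G \to S$ that is flat and locally of finite presentation is (locally) (R-)unipotent precisely when the classifying morphism $\classB G \to S$ is (locally) (R-)unipotent. So the hypotheses of the corollary translate immediately into: $\classB G \to S$ is a quasi-compact (this needs a word — see below) and locally (R-)unipotent morphism, with $S$ quasi-compact with affine diagonal.

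First I would check that $f \colon \classB G \to S$ is quasi-compact and quasi-separated so that Theorem \ref{T:flat-local-unipotent-morphism} applies. Since $G \to S$ is flat and locally of finite presentation, and the fpqc-local model for $f$ after the covering witnessing local (R-)unipotence is an (R-)unipotent morphism — which is in particular quasi-separated — one gets quasi-separatedness of $f$ by descent; quasi-compactness of $f$ follows because $\classB G \to S$ is always of finite presentation when $G \to S$ is, or more simply because $f$ has a section (the universal $G$-torsor is trivial over $S$... no, rather $\classB G \to S$ admits a smooth surjective section $S \to \classB G$, namely the trivial torsor), and a morphism admitting a faithfully flat quasi-compact ``section-like'' atlas $S \to \classB G$ with $S$ quasi-compact is quasi-compact. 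I would phrase this cleanly: $S \to \classB G$ is a smooth surjection with $S$ quasi-compact, hence $\classB G$ is quasi-compact, and $f$ inherits quasi-compactness over the quasi-compact base.

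Then I would apply Theorem \ref{T:flat-local-unipotent-morphism} to $f \colon \classB G \to S$. Part (1) gives: if $S$ has the resolution property, then $\classB G$ is a global quotient (respectively has the resolution property). By the remark following Definition \ref{D:embedding-groups}, $\classB G \to S$ being a global quotient is equivalent to $G \to S$ being embeddable, and $\classB G \to S$ having the resolution property is equivalent to $G \to S$ being R-embeddable. This proves item (1). Part (2) gives: if $S$ has the $\Flag$-resolution property, then $f = (\classB G \to S)$ is (R-)unipotent, which is exactly the statement that $G \to S$ is an (R-)unipotent group by Definition \ref{D:unipotent groups}. This proves item (2).

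The only real content beyond bookkeeping is the quasi-compactness/quasi-separatedness verification for $\classB G \to S$; I expect that to be the main (very minor) obstacle, and it is handled by noting that $G \to S$ flat locally of finite presentation forces $\classB G \to S$ to be a quasi-separated algebraic stack over $S$ with $\classB G$ quasi-compact (via the atlas $S \to \classB G$). Everything else is a direct translation through Definitions \ref{D:embedding-groups} and \ref{D:unipotent groups} and the remark relating embeddability to the global quotient and resolution properties of $\classB G$.
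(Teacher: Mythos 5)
Your proposal is correct and is precisely the paper's (unwritten) argument: the paper simply states that the corollary follows by ``translating Theorem \ref{T:flat-local-unipotent-morphism} into groups,'' i.e.\ by applying that theorem to $\classB G \to S$ and converting the conclusions via Definition \ref{D:unipotent groups} and the remark after Definition \ref{D:embedding-groups}, exactly as you do. The only cosmetic point is that the atlas $S \to \classB G$ is fppf rather than smooth when $G \to S$ is merely flat and locally of finite presentation, but this does not affect your quasi-compactness argument.
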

Note the stark contrast to the case of
tori. Tori are always locally embeddable, but may not be embeddable even over a projective curve
\cite[X.1.6 \& XI.4.6]{SGA3-II-NEW}.

\begin{remark} \label{rem:overafieldallagree} By Example \ref{R:unipotent-group-thesame} and Corollary \ref{C:flat-local-strong-unipotent} all the definitions in Definition \ref{D:unipotent groups} agree when $S$ is the spectrum of a field. Thus, one may speculate what the best definition of unipotence should be over a general base. We argue that local R-unipotence is the best behaved as evidenced by Theorem \ref{T:flat-local-unipotent-morphism}, Examples \ref{R:unipotent-group-thesame}, \ref{E:vector-is-loc-unipotent}, \ref{E:pgroup}, \ref{E:char0-affine}, and Proposition \ref{prop:pushforward is faithful on objects}. \end{remark}

We have the following proposition, which justifies our terminology.
\begin{proposition}\label{P:stabilizer-geom-unipotent}
  Let $f \colon X \to S$ be a geometrically unipotent morphism of
  algebraic stacks. Then the relative geometric stabilizers of $f$ are all
  unipotent groups. 
\end{proposition}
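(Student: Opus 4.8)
The plan is to reduce immediately to the fibre of $f$ over a geometric point and then invoke the characterization of unipotent morphisms from Theorem~\ref{T:char-Runi}. Fix an algebraically closed field $k$ and a morphism $x\colon\Spec k\to X$ lying over $s\colon\Spec k\to S$, and set $X_s=X\times_{S,s}\Spec k$. The relative geometric stabilizer of $f$ at $x$ is the group algebraic space $G:=\Aut_{X_s}(x)$ over $k$, and by hypothesis $f_s\colon X_s\to\Spec k$ is unipotent; we must show that $G$ is a unipotent algebraic group over $k$, that is, a closed subgroup scheme of some $\U_{n,k}\subset\GL_{n,k}$.

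The first step exploits the fact that over $\Spec k$ every line bundle is trivial, so a complete $f_s$-graded flag on $X_s$ is automatically trivially graded; taking the sequence $\mathscr{L}=(\Orb,\dots,\Orb)$ one has $\U(\mathscr{L})=\U_{n,k}$ (Example~\ref{E:unipotent-triv-bundle}), and the final assertion of Theorem~\ref{T:char-Runi} provides a \emph{representable}, quasi-separated $\Spec k$-morphism $h\colon X_s\to\classB\U_{n,k}$. Since $h$ is representable its relative inertia is trivial, so the homomorphism of $k$-group schemes $G\to\Aut_{\classB\U_{n,k}}(h(x))$ is a monomorphism. The target here is the automorphism group of a $\U_{n,k}$-torsor over $\Spec k$; such a torsor is trivial, because $\U_{n,k}$ is $k$-smooth and $k$ is algebraically closed so the torsor has a rational point, whence $\Aut_{\classB\U_{n,k}}(h(x))\cong\U_{n,k}$. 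Thus $G$ admits a monomorphism into $\U_{n,k}\subset\GL_{n,k}$.

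It remains to promote this monomorphism to a closed immersion of honest algebraic groups, and this is the only delicate point. As $f_s$ is unipotent, its diagonal is quasi-affine (Definition~\ref{def:unipotent}), and since $G$ is the base change of $\Delta_{X_s/k}$ along $(x,x)\colon\Spec k\to X_s\times_k X_s$, the group scheme $G$ is quasi-affine, hence affine, over $k$. A monomorphism from an affine group scheme over a field into $\U_{n,k}$ is automatically a closed immersion: filtering $\U_{n,k}$ by its standard central series, whose successive quotients are vector groups $\mathbf{G}_a^r$, reduces this to the statement that a monomorphism of group schemes over a field into $\mathbf{G}_a^r$ is a closed immersion, which in turn rests on the faithful flatness of a commutative Hopf algebra over its Hopf subalgebras. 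Consequently $G\hookrightarrow\U_{n,k}$ is a closed immersion and $G$ is a unipotent algebraic group over $k$. I expect this last verification --- that the relative stabilizer is a genuine unipotent algebraic group, rather than merely a group algebraic space receiving a monomorphism to $\U_{n,k}$ --- to be the main technical point; granted Theorem~\ref{T:char-Runi}, the geometric content of the argument is essentially immediate.
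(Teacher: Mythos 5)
Your argument is correct, and it reaches the same endpoint as the paper --- an embedding of the stabilizer into $\U_{n,k}$ --- but by a different mechanism. The paper reduces to $S=\spec l$ with $l$ algebraically closed, forms the representable residual-gerbe morphism $\classB G_{\bar x}\to X$, applies Lemma~\ref{L:restriction-flag}\eqref{LI:restriction-flag:uni} to conclude that $\classB G_{\bar x}$ is unipotent over the base field (i.e.\ the faithful complete flag restricts to a faithful filtered representation of $G_{\bar x}$ with trivial graded pieces), and then invokes Example~\ref{R:unipotent-group-thesame}. You instead pass through the final assertion of Theorem~\ref{T:char-Runi}: the representable classifying morphism $h\colon X_s\to\classB\U_{n,k}$ is a monomorphism on inertia, and evaluating at the (necessarily trivial, since $\U_{n,k}$ is smooth and $k$ is algebraically closed) torsor $h(x)$ gives $G\hookrightarrow\U_{n,k}$ directly. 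The two routes are close to equivalent --- both ultimately restrict the faithful $f$-graded flag to the stabilizer --- but yours trades the residual gerbe and the composition lemma for the heavier Theorem~\ref{T:char-Runi}; the paper's version is slightly more economical. Two small remarks. First, your final paragraph upgrading the monomorphism to a closed immersion is not actually required: the paper's working definition of a unipotent group over a field (stated just before Definition~\ref{D:unipotent groups}, with ``embedding'' as in Definition~\ref{D:embedding-groups}) asks only for a group monomorphism into $\U_{n,k}$. Second, the d\'evissage you sketch for that upgrade (filtering $\U_{n,k}$ by its central series and reducing to $\mathbf{G}_a^r$) is not quite a proof as stated, since forming the intermediate quotients of $G$ already presupposes some of what you are trying to show; the clean statement to cite is simply that a monomorphism of affine (or locally of finite type) group schemes over a field is a closed immersion, which is standard. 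Neither point affects the validity of your argument.
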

\begin{proof}
  We may assume that $S=\spec l$, where $l$ is an algebraically closed
  field. Let $\bar{x}\colon \spec k \to X$ be a point of $X$, where
  $k$ is an algebraically closed field. Let
  $G_{\bar{x}} = \Aut_X(\bar{x})$ be its automorphism group. Then we
  have a representable morphism
  $\tilde{x} \colon \classB G_{\bar{x}} \to X$. By Lemma
  \ref{L:restriction-flag}\eqref{LI:restriction-flag:uni},
  $\classB G_{\bar{x}}$ is unipotent. Now apply the last sentence of Remark
  \ref{R:unipotent-group-char}. 
\end{proof}
\begin{Example}\label{E:vector-is-loc-unipotent}
  Let $S$ be an algebraic stack. Let $E$ be a locally free sheaf on $S$ of
  rank $n$. Then the vector group $\mathbf{V}(E) \to S$ is locally
  R-unipotent. We may work locally on $S$, so we may assume that
  $E\simeq \Orb_{S}^{\oplus n}$. In this case,
  $\mathbf{V}(E) \simeq \mathbf{G}_{a,S}^n$. Now
  $\mathbf{G}_{a,\Z}^n \subseteq \U_{n+1,\Z} \subseteq
  \GL_{n+1,\Z}$. Since $\Z$ is a Dedekind domain, it follows from Example
  \ref{R:unipotent-group-thesame} that
  $\mathbf{G}_{a,\Z}^n \to \spec \Z$ is R-unipotent. Hence,
  $\mathbf{V}(E) \to S$ is locally R-unipotent. If $S$ is
  quasi-compact with affine diagonal and the $\Flag$-resolution property, then
  $\mathbf{V}(E) \to S$ is even R-unipotent (Corollary
  \ref{C:flat-local-strong-unipotent}).
\end{Example}
\begin{Example}\label{E:pgroup}
  Let $p>0$ be a prime. Let $S$ be an algebraic $\F_p$-stack. Let
  $G \to S$ be a finite \'etale group scheme of degree $p^d$ for some
  $d\geq 0$. Then $G \to S$ is locally R-unipotent. Indeed, smooth
  locally on $S$ we may assume that $G \to S$ is a constant group
  scheme. We may thus assume that $S=\spec \overline{\F}_p$ and $G$ is a
  constant $p$-group. Standard facts from group theory show that $G$
  is unipotent, and the claim follows.
\end{Example}

\begin{Example} \label{E:nonseparatedline} If $X$ denotes the affine
  line with doubled origin, then it is an R-unipotent scheme. Indeed
  if $x$, $y \in X$ denote the two origins, then the trivially graded
  flag $\mathcal{O}_X=I_{\{x,y\}} \subset I_{\{x\}} \oplus I_{\{y\}}$
  is a tensor generator, where $I_C$ is the ideal sheaf associated to
  a reduced closed subscheme $C \subset X$.
\end{Example}

\subsection{Representable R-unipotent morphisms} We now briefly discuss representable (locally) R-unipotent morphisms
$f\colon X\to S$. Characterizing such morphisms seems subtle. For
instance, consider for simplicity the case when $S=\Spec k$ is the
spectrum of a field. It follows from Theorem \ref{T:char-Runi} that
representable $R$-unipotent morphisms $f\colon X\to \Spec k$ are
exactly quotients of the form $T/\U_{n}$, where $T$ is a quasi-affine
scheme over $k$ acted on freely by $\U_{n}$. In particular, a
quasi-affine morphism over $k$ is R-unipotent and hence geometrically
R-unipotent. The converse is not true, however.
\begin{Example} \label{ex:repunipotent} 
{\par\noindent}
\begin{enumerate}
\item Let $k$ be a field and denote by $X$ the complement of a single
  point in the exceptional locus of $\mathbf{A}^2_k$ blown up at the
  origin. We claim that the morphism $f\colon X \to \mathbf{A}^2$ is
  geometrically $R$-unipotent but is not quasi-affine. Indeed, the
  geometric fibers are all affine and hence the morphism is
  geometrically $R$-unipotent. On the other hand, if $f$ was
  quasi-affine then the natural map
  $g\colon X \to \Spec \Gamma(X,\mathcal{O}_X)$ would be an open immersion
   \cite[Tag \spref{01SM}]{stacks-project} and this cannot be true
  since $\Gamma(X,\mathcal{O}_X)=k[x,y]$.
\item In \cite[Ex. 3.16]{MR2335246}, Asok and Doran describe a free action
  of $\U_1=\mathbf{G}_a$ on $\mathbf{A}^5$ such that the quotient
  $\mathbf{A}^5/\mathbf{G}_a$ is not even a scheme. More generally,
  given a smooth affine scheme $T$ over $k$ equipped with a free and proper action of $\U_n$,
  \cite[Corollary 3.18]{MR2335246} yields an effective
  criterion for determining when the quotient $T/\U_n$ will be affine
  or quasi-affine.
\end{enumerate}
\end{Example}
We have the following result, whose proof we defer until \S\ref{S:faithful}.
\begin{proposition}\label{prop:unipotent + proper = finite}
  Let $f\colon X\to S$ be a representable and geometrically R-unipotent
  morphism of algebraic stacks. If $f$ is proper, then $f$ is finite.
\end{proposition}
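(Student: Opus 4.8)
The statement to prove is: if $f \colon X \to S$ is a representable, locally R-unipotent, and proper morphism of algebraic stacks, then $f$ is finite. Since finiteness can be checked fpqc-locally on the base, and properness and representability are stable under base change, I would first reduce to the case where $f$ itself is R-unipotent; this is legitimate because by definition of locally R-unipotent there is an fpqc cover $S' \to S$ over which the base change is R-unipotent, and finiteness descends along fpqc covers. So assume $f$ is R-unipotent. By Theorem~\ref{T:char-Runi}, there is an ordered sequence of line bundles $\mathscr{L}$ on $S$ and a factorization
\[
  X \xrightarrow{h} \classB \U(\mathscr{L}) \to S
\]
with $h$ quasi-affine; moreover, since $f$ is representable, $h$ is also representable (being the composite $X \to \classB\U(\mathscr{L})$, and $X \to S$ is representable), hence $h$ is a quasi-affine \emph{representable} morphism.

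Now I want to exploit properness. The morphism $h \colon X \to \classB\U(\mathscr{L})$ need not be proper, but I claim that after a suitable flat base change it becomes proper. Pull back along the smooth atlas $S \to \classB\U(\mathscr{L})$ (coming from the $\U(\mathscr{L})$-torsor): we get $X_0 := X \times_{\classB\U(\mathscr{L})} S \to S$, which is quasi-affine (being the base change of the quasi-affine $h$) \emph{and} is the base change of the proper morphism $f$ along the smooth (in fact affine, if $\mathscr{L}$ is trivial; smooth in general) covering $S \to \classB\U(\mathscr{L}) \to S$—wait, more carefully: $X_0 \to S$ factors as $X_0 \to X \to S$ where the first map is a $\U(\mathscr{L})$-torsor and the second is proper. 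The key observation is that $X_0 \to S$ is then the composite of a torsor under a group scheme with quasi-affine quotient and a proper morphism; I should instead argue that $X_0 \to S$ is both quasi-affine and proper, hence finite. To see it is proper: $X_0 \to X$ is an fppf $\U(\mathscr{L})$-torsor, but that isn't proper. So this direct route has a gap.

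**The cleaner route, and the main obstacle.** The real point should be: a quasi-affine proper morphism of algebraic stacks is finite. Indeed, by Theorem~\ref{thm:totgross} (or directly), a quasi-affine representable morphism $X \to S$ can be written as $X \hookrightarrow \overline{X} \to S$ with the first map an open immersion into something affine over $S$ (Zariski-locally on $S$, say $X$ is a quasi-compact open of $\Spec_S \mathcal{A}$ for a quasi-coherent sheaf of algebras $\mathcal{A}$)—actually the correct statement is that $X \to \Spec_S(f_*\mathcal{O}_X)$ is a quasi-compact open immersion by \cite[Tag~\spref{01SM}]{stacks-project}-type reasoning, and $\Spec_S(f_*\mathcal{O}_X) \to S$ is affine. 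Then if $f$ is also proper, $X \to \Spec_S(f_*\mathcal{O}_X)$ is an open immersion that is also proper (since $X \to S$ proper and $\Spec_S(f_*\mathcal{O}_X) \to S$ separated implies $X \to \Spec_S(f_*\mathcal{O}_X)$ proper), hence a closed immersion, hence a clopen immersion, hence $X \to S$ is affine; and proper $+$ affine $=$ finite. The one subtlety is that I first need $f_*\mathcal{O}_X$ to be \emph{quasi-coherent} and the formation of $\Spec_S$ to make sense—this is automatic for quasi-compact quasi-separated $f$, which holds since $f$ is quasi-affine, so I should assume $X$ is quasi-compact (reduce to that case first, as finiteness is local on $S$). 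Thus the whole argument reduces to: (i) reduce to $f$ R-unipotent and quasi-compact; (ii) invoke Theorem~\ref{T:char-Runi} to get $f$ representable quasi-affine (composing the quasi-affine $h$ with the affine—by Proposition~\ref{P:hom}—morphism $\classB\U(\mathscr{L}) \to \classB\GL_{n,S}$, and then... no: I just need $h$ quasi-affine and $\classB\U(\mathscr{L}) \to S$ representable, which it is NOT).

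Let me restate the obstacle honestly: $\classB\U(\mathscr{L}) \to S$ is \emph{not} representable, so $f$ being representable does not directly make $h$ land in a scheme. The resolution is that $f$ representable plus $h \colon X \to \classB\U(\mathscr{L})$ means the fibered product $X \times_{\classB\U(\mathscr{L})} S$ (via the atlas $S \to \classB\U(\mathscr{L})$) is a scheme quasi-affine over $S$, and it is an fppf $\U(\mathscr{L})$-torsor over $X$; since $f$ is proper and $\U(\mathscr{L}) \to S$ is \emph{not} proper in general, I cannot conclude properness of $X \times_{\classB\U(\mathscr{L})} S \to S$ this way. \textbf{The main obstacle is therefore genuinely this: transferring properness of $f$ through the non-proper gerbe $\classB\U(\mathscr{L})$.} I expect the correct fix is to avoid the gerbe entirely and argue: a quasi-separated representable \emph{proper} morphism with affine (equivalently, unipotent, hence affine) relative geometric stabilizers and which is locally R-unipotent must, on each geometric fiber over $S$, be of the form $T/\U_n = T$ (representable!) with $T$ quasi-affine and proper over a field, hence $T$ is proper and quasi-affine over a field, hence finite; then spread out / use that finiteness can be checked on fibers for proper flat—no, $f$ need not be flat. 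So instead: use Proposition~\ref{prop:framebundle} or directly that quasi-affine $+$ proper $=$ finite for representable morphisms (this is standard: a quasi-affine morphism is separated and a monomorphism onto an open of an affine morphism after affine base change; proper quasi-affine schemes over a base are finite by \cite[Tag~\spref{01WN}]{stacks-project}/EGA). The cleanest path: show directly that a representable locally R-unipotent morphism is quasi-affine (it is, since R-unipotence local on $S$ plus Theorem~\ref{T:char-Runi} with representability forces the quasi-affine $h$ and the representable fibers to combine—here one does use that over each point the thing is $T/\U_n=T$ quasi-affine), and then invoke ``quasi-affine $+$ proper $\Rightarrow$ finite'' for representable morphisms. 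I would write the proof in that order: (1) finiteness and quasi-affineness are fpqc-local on $S$, so reduce to $f$ R-unipotent; (2) use Theorem~\ref{T:char-Runi} and representability to deduce $f$ is quasi-affine; (3) conclude via proper $+$ quasi-affine $=$ finite.
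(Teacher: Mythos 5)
Your reduction to the R-unipotent case and your closing step (proper $+$ quasi-affine $\Rightarrow$ finite) are both fine, but the middle step --- that a representable (locally) R-unipotent morphism is quasi-affine --- is false, and the paper itself records counterexamples. By Theorem \ref{T:char-Runi}, a representable R-unipotent morphism over a field is a quotient $T/\U_n$ with $T$ quasi-affine and the $\U_n$-action \emph{free}; representability does not collapse this quotient to $T$ itself. Example \ref{ex:repunipotent}(2) (Asok--Doran) exhibits a free $\mathbf{G}_a$-action on $\mathbf{A}^5$ whose quotient is an algebraic space that is not even a scheme: the morphism $\mathbf{A}^5/\mathbf{G}_a \to \Spec k$ is representable and R-unipotent (it factors through $\classB\mathbf{G}_a$ by an affine morphism) but is certainly not quasi-affine. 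So your step (2) cannot be repaired by letting ``the representable fibers combine with the quasi-affine $h$''; the gerbe $\classB\U(\mathscr{L})$ genuinely gets in the way, exactly as you suspected in your ``main obstacle'' paragraph, and the fiberwise identification $T/\U_n=T$ that you float is the same error.

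What the paper does instead is argue fiberwise through the faithful-moduli-space machinery of \S\ref{S:faithful}: after replacing $S$ by the Stein factorization, one shows that each reduced geometric fiber $(X_s)_{\mathrm{red}}$ is Stein and R-unipotent over an algebraically closed field, hence a faithful moduli space (Example \ref{E:stein-loc-R-faithful}), hence isomorphic to $\Spec k$ by Corollary \ref{C:faithful-field}; thus $f$ is quasi-finite and proper, and Zariski's Main Theorem concludes. Some input of this kind --- ultimately Proposition \ref{prop:pushforward is faithful on objects}, that a nonzero quasi-coherent sheaf on an R-unipotent stack over an affine base has nonzero pushforward --- is what actually rules out positive-dimensional proper fibers, and your outline has no substitute for it.
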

It follows from Proposition \ref{prop:unipotent + proper = finite}
that representable geometrically R-unipotent morphisms over a field do not contain
any positive-dimensional proper subvarieties.

\subsection{Geometrically unipotent vs locally unipotent groups} \label{ss:geounivslocal} We conclude this section with some thoughts on the following natural
refinement of Question \ref{Q1}: which geometrically unipotent groups are locally unipotent?
Note that locally unipotent groups are \emph{always} quasi-affine; in
particular, they are separated and schematic. In positive characteristic, it is easy
to produce geometrically unipotent group algebraic spaces that are not
separated. There are also separated geometrically unipotent group algebraic spaces that are not schemes. In particular, these give examples of geometrically unipotent groups that are not locally unipotent.
\begin{Example}\label{E:non-separated-unipotent}   Let $k$ be a field of characteristic $2$. Let
  $S = \mathbf{A}^1_{k}$. Let $H \subseteq (\Z/2\Z)_S=G$ be the
  \'etale subgroup obtained by deleting the non-trivial point over the
  origin.
  \begin{enumerate}
  \item Let $Q=G/H \to S$ be the
    line with the doubled-origin, viewed as a group scheme. Then
    $Q \to S$ is geometrically unipotent, but non-separated. In particular, it is
    not locally unipotent.
  \item We have $H \subseteq G \subseteq \mathbf{G}_{a,S}$. Let
    $Q' = \mathbf{G}_{a,S}/H \to S$. Then $Q' \to S$ is smooth,
    geometrically unipotent, with connected fibers, non-separated, and not a scheme 
    \cite[VI$_{\mathrm{B}}$.5.5]{SGA3-I-NEW}. This shows that the closed subgroup condition in
    Proposition \ref{P:hom}\eqref{lem:i:hom:sub} is necessary.
  \item A more sophisticated example was constructed in
    \cite[X.14]{Raynaud}: take $T=\mathbf{A}^2_{k}$; then there is a
    \emph{closed} subgroup $N \subseteq \mathbf{G}_{a,T}^2$ with
    $N \to T$ \'etale. Taking $Q'' = \mathbf{G}_{a,T}^2/N$, we produce
    a smooth, separated, geometrically unipotent group of finite
    presentation with connected fibers. There it is shown that $Q''$
    is not representable by a scheme. This shows that the condition
    that the quotient is representable by a scheme in Proposition
    \ref{P:hom}\eqref{lem:i:hom:sub} is necessary.
\end{enumerate}
\end{Example}
In characteristic $0$, there are strong results in the existing
literature, however.
\begin{Example}\label{E:char0-affine} 
  Let $S$ be an algebraic stack of equicharacteristic $0$. Let
  $G \to S$ be geometrically unipotent and flat-locally
  schematic. Then $G \to S$ is affine. Indeed, this is local on $S$,
  so we may assume that $S$ is affine. By standard limit methods, we
  may assume that $S$ is of finite type over $\spec \Z$ and so
  excellent; in particular, $S$ has noetherian normalization. If $S$
  is either normal or $G \to S$ is embeddable, then
  there is an isomorphism of groups
  \[
    \exp \colon \mathbf{V}(\mathrm{Lie}(G)) \to G,
  \]
  where we endow the left hand side with the group structure coming
  from the Baker--Campbell--Hausdorff formula.  In particular, when
  $G \to S$ is commutative, it is a vector group, so is locally
  R-unipotent. The statement involving the exponential map is
  established in \cite[\S1.3]{MR3525845} and
  \cite[XV.3]{Raynaud}. That $G \to S$ is affine follows from the
  normal case and Chevalley's Theorem \cite[Thm.~8.1]{rydh-2009}.
\end{Example}
We also have the following examples.
\begin{Example}\label{E:examples-r-unipotence}
  Let $G \to S$ be an affine and geometrically unipotent group.
  \begin{enumerate}
  \item \label{EI:examples-r-unipotence:prufer} If $S=\spec A$, where $A$ is a Pr\"ufer domain, then $G \to S$
    is R-unipotent.
  \item \label{EI:examples-r-unipotence:finite} If $S$ is equicharacteristic $p>0$ and $G \to S$
  is a finite locally free commutative group scheme with dual of height $1$, then $G \to S$ is locally R-unipotent. 
\end{enumerate}
For \eqref{EI:examples-r-unipotence:prufer}: by Lemma \ref{L:pruferflag} every vector bundle on $BG$ admits the structure of a complete
$S$-graded flag. In combination with Corollary
\ref{C:res-property-dim1}, this implies that $G$ is $R$-unipotent. For \eqref{EI:examples-r-unipotence:finite}:
the exact sequence of \cite[Prop. 1.1]{AMduality} implies there is an
affine morphism $BG \to B\mathbf{V}(\omega)$ for some vector group
$\mathbf{V}(\omega) \to S$. Since $\mathbf{V}(\omega) \to S$ is
locally $R$-unipotent (Example \ref{E:vector-is-loc-unipotent}), \eqref{EI:examples-r-unipotence:finite} follows.
\end{Example}

\section{Applications}
\subsection{Unipotent gerbes and the resolution property}
Theorem
\ref{T:flat-local-unipotent-morphism} immediately yields a proof of Theorem \ref{MT:ga-gerbes-res}.
\begin{proof}[Proof of Theorem \ref{MT:ga-gerbes-res}]
  A $\mathbf{G}_a$-gerbe is locally R-unipotent (Example
  \ref{R:unipotent-group-thesame}). It follows from Theorem
  \ref{T:flat-local-unipotent-morphism} that $\ms G$ has the resolution
  property.
\end{proof}

\begin{remark} \label{rem:extendingA} Assume that $X$ lives over a
  scheme $S$ and $G \to S$ is locally R-unipotent: for example, $G=\bG_a^n$, $G=\U_{n,S}$, or $S$ is the spectrum of a field and $G \subseteq \U_{n,S}$ (see Remark \ref{R:unipotent-group-thesame}). Then the argument above for Theorem
  \ref{MT:ga-gerbes-res} also holds for $G$-gerbes $\ms G \to X$.
\end{remark}

We next obtain the following
corollary of Theorem \ref{T:flat-local-unipotent-morphism}, which is
key for our proof of Theorem \ref{MT:dm-stack-res}.
\begin{corollary}\label{C:rel-DM-p-power}
  Let $S$ be a quasi-compact algebraic stack with affine diagonal over $\mathbf{F}_p$. Let $\pi \colon \ms G \to S$ be a
  gerbe that is relatively Deligne--Mumford, separated and with
  $p$-power order inertia.
  \begin{enumerate}
  \item If $S$ has the resolution property, then $\ms G$ has the resolution property. 
  \item If $S$ has the $\Flag$-resolution property, then $\pi$ is R-unipotent. 
\end{enumerate}

\end{corollary}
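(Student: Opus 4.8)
The strategy is to reduce to Theorem \ref{T:flat-local-unipotent-morphism} by showing that $\pi \colon \ms G \to S$ is locally R-unipotent. Since R-unipotence and local R-unipotence are insensitive to smooth base change (Lemma \ref{L:basechange-unipotent}), and since the conclusions we want are exactly the conclusions of Theorem \ref{T:flat-local-unipotent-morphism} (global quotient / resolution property under the resolution property hypothesis, and R-unipotence under the $\Flag$-resolution hypothesis), it suffices to verify local R-unipotence of $\pi$. The first step is therefore to pass to a smooth cover of $S$ over which the gerbe $\ms G$ becomes neutral; after such a base change we may assume $\ms G \simeq \classB G$ for a group scheme $G \to S$ that is separated, Deligne--Mumford (hence quasi-finite, and being a gerbe's band, flat and locally of finite presentation), and of $p$-power order. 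So $G \to S$ is a finite flat group scheme of $p$-power order over an $\F_p$-stack.

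The heart of the argument is then to show that such a $G \to S$ is locally R-unipotent as a group, i.e. that $\classB G \to S$ is locally R-unipotent. For this I would work smooth-locally on $S$ and reduce, using the standard spreading-out and approximation machinery, to the case $S = \Spec k$ with $k$ algebraically closed of characteristic $p$; there $G$ is a finite group scheme of $p$-power order over a field, and every such group scheme is unipotent (this is classical: a finite $k$-group scheme of $p$-power rank is unipotent, e.g. by the connected-\'etale sequence together with the facts that \'etale $p$-groups are unipotent and infinitesimal $p$-power-rank groups are unipotent). Hence $G_k \hookrightarrow \U_{n,k}$, so $\classB G_k \to \Spec k$ is R-unipotent by Theorem \ref{T:char-Runi} (or directly by Example \ref{R:unipotent-group-thesame}). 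Spreading this embedding back out — using that $G$ is finite over $S$, so the quotient $\U_{n,S}/G \to S$ is affine by Proposition \ref{P:hom}\eqref{lem:i:hom:finite} once an embedding into $\U_{n,S}$ is produced fpqc-locally — gives local R-unipotence of $G \to S$, and hence of $\pi$.

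The main obstacle I anticipate is the spreading-out step: one has pointwise unipotence of the fibers $G_s$ over all geometric points $s$, but to conclude \emph{local} R-unipotence one needs an actual flat-local embedding $G \hookrightarrow \U_{n,S}$ with a bounded $n$, not just fiberwise embeddings. The clean way around this is precisely Example \ref{R:unipotent-group-thesame}, which already packages the statement that a finite flat group scheme $G \to S$ whose fibers are unipotent is locally R-unipotent; alternatively one invokes Example \ref{E:examples-r-unipotence}(2) after further reducing the $p$-power-order finite group scheme to a height-one situation via a filtration by iterated Frobenius kernels. Once local R-unipotence of $\pi$ is in hand, Theorem \ref{T:flat-local-unipotent-morphism} applied to $\pi \colon \ms G \to S$ yields both statements (1) and (2) verbatim, since the hypothesis "$S$ quasi-compact with affine diagonal" is part of our assumptions.
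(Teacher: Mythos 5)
Your overall strategy---neutralize the gerbe after a smooth base change, show that the resulting band is locally R-unipotent by reduction to an algebraically closed field, and then invoke Theorem \ref{T:flat-local-unipotent-morphism}---is exactly the paper's, and the final step is handled correctly. The gap is in the middle. After passing to the cover you only retain that the band $G \to S'$ is finite, flat, of finite presentation and of $p$-power order, and you then assert that every finite group scheme of $p$-power rank over a field of characteristic $p$ is unipotent, disposing of the connected part with the claim that ``infinitesimal $p$-power-rank groups are unipotent.'' That claim is false: $\mu_p$ in characteristic $p$ is infinitesimal of order $p$ and of multiplicative type, hence not unipotent (it has a nontrivial character $\mu_p \hookrightarrow \mathbf{G}_m$, whereas unipotent groups over a field admit none). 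So the argument breaks exactly at its heart; note also that the corollary itself would be in doubt without the Deligne--Mumford hypothesis, since a $\mu_p$-gerbe in characteristic $p$ satisfies every other hypothesis you retained.

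The repair is the hypothesis you discarded. Because $\pi$ is relatively Deligne--Mumford and separated, the band $G' \to S'$ is unramified as well as flat and locally of finite presentation, hence finite \emph{\'etale} of degree $p^d$; there is no infinitesimal part to worry about. Smooth-locally such a group scheme is constant, and a constant $p$-group in characteristic $p$ is unipotent (it has a central series with graded pieces $\Z/p\Z$, each of which embeds in $\mathbf{G}_a$ by Artin--Schreier). This is precisely Example \ref{E:pgroup}, which is what the paper cites before applying Theorem \ref{T:flat-local-unipotent-morphism}. With that substitution your proof coincides with the paper's. Your fallback via Example \ref{E:examples-r-unipotence} does not rescue the general finite flat case either, since that example already assumes the group is geometrically unipotent, which is the very point at issue.
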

\begin{proof}
  Since $\pi$ is relatively Deligne--Mumford and separated, there is a
  smooth covering $S'\to S$ such that
  $\ms G\times_S S' \simeq \classB G'$, where $G' \to S'$ is finite
  \'etale. The $p$-power order inertia assumption on $\pi$ implies
  that $G' \to S'$ has degree $p^d$ for some $d\geq 0$. Example
  \ref{E:pgroup} shows that $G' \to S'$ is locally R-unipotent. Now
  apply Theorem \ref{T:flat-local-unipotent-morphism}.
\end{proof}
\subsection{Deligne--Mumford Gerbes}
Let $X$ be an algebraic stack. Then there is an injective group homomorphism:
\[
  \mathrm{Br}(X) \to \mathrm{Br}'(X)=\H^2(X,\mathbf{G}_m)_{\text{tors}}
\]
from the geometric Brauer group of $X$ to the cohomological Brauer group of $X$. However, it is still not known when the map is surjective (see, for instance, \cite{mathur2020experiments} for an introduction to this problem).

We are now in a position to prove one of
our key results.
\begin{Theorem}\label{T:DM-gerbes}
  Let $S$ be a connected quasi-compact algebraic stack with
  affine diagonal and the resolution property over a field $k$. Let
  $\pi \colon \ms G \to S$ be a gerbe that is relatively
  Deligne--Mumford and separated. If
  $\mathrm{Br}(T)=\mathrm{Br}'(T)$ for all finite
  \'etale morphisms $T\to S$ (e.g. if $S$ is a quasi-compact scheme that admits
  an ample line bundle), then $\ms G$ has the resolution property.
\end{Theorem}
\begin{proof}
  The gerbe $\ms G \to S$ is locally banded by a finite constant group scheme
  $\underline{G}_S$. It follows that the sheaf of isomorphisms (in the
  category of Bands over $S$):
  \[
    I=\underline{\mathrm{Isom}}(\underline{\mathrm{Band}}(\ms G),\underline{G}_S)
  \]
  is a torsor under the outer automorphism group
  $\text{\underline{Out}}(\underline{G}_S) \to S$. This map is finite
  \'etale, so we may replace $S$ with $I$ as the resolution property
  descends under such maps
  \cite[Prop. 5.3(vii)]{2013arXiv1306.5418G}. Hence, we can assume
  that $\ms G \to S$ is banded by $\underline{G}_S$, which is constant.
  Now there is a factorization:
  \[
    \ms Z \to \ms G \to S,
  \]
  where $\ms Z \to S$ is a gerbe banded by the center of
  $\underline{G}_S$, which is a finite abelian constant group scheme
  and $\ms Z \to \ms G$ is finite \'etale and surjective. Indeed,
  define $\ms Z$ to be the stack of banded equivalences
  \[
    \ms Z=\underline{\mathrm{HOM}}_{\mathrm{id}}(\classB\underline{G}_S, \ms G) \to S.
  \]
  By \cite[IV.2.3.2(iii)]{giraud1966cohomologie}, $\ms Z$ is banded by
  $Z(\underline{G}_S)=\underline{Z(G)}_S$ and the natural evaluation
  map $\text{ev} \colon \ms Z \to \ms G$ induces a map on bands equal
  to the inclusion $\underline{Z(G)}_S \subset
  \underline{G}_S$. Again, by
  \cite[Prop. 5.3(vii)]{2013arXiv1306.5418G}, we may now replace
  $\ms G$ by $\ms Z$. Thus, $\ms G$ is banded by the constant group
  scheme associated to the finite abelian group
  \[
    \mathbf{Z}/p_1^{n_1}\mathbf{Z} \times \dots \times \mathbf{Z}/p_l^{n_l}\mathbf{Z},
  \]
  where the $p_i$ are primes. Hence,
  $\ms G \cong \ms R_1 \times_S \dots \times_S \ms R_l$, where
  $\ms R_i$ is a $\mathbf{Z}/p_i^{n_i}\mathbf{Z}$-gerbe for all
  $i$. For each $p_i$ that is prime to the characteristic of $k$,
  consider the finite and separable field extension $k \subseteq k'$
  obtained by adding in all the $p_i^{n_i}$-th roots of unity. As
  before, it suffices to prove the result after base changing to
  $k'$. In particular, for those $p_i$ we now have
  $\mathbf{Z}/p_i^{n_i}\mathbf{Z} \cong \mu_{p_i^{n_i}}$. Since
  $\mathrm{Br}(S)=\mathrm{Br}'(S)$, the resulting $\ms R_i$ have the
  resolution property. For the remaining $p_i$ that are the
  characteristic of $k$, these $\ms R_i$ have the resolution property
  by Corollary \ref{C:rel-DM-p-power}. An easy calculation now shows
  that $\ms G$ has the resolution property.
\end{proof}
\subsection{Deligne--Mumford stacks in positive characteristic}
We finally come to the proof of Theorem \ref{MT:dm-stack-res}.
\begin{proof}[Proof of Theorem \ref{MT:dm-stack-res}]
  We may assume that $\ms X$ is connected. By \cite[Prop. 2.1]{olssonboundedness}, the morphism $\ms X \to X$
  can be rigidified. Hence, it factors into a gerbe morphism and a
  coarse moduli space morphism:
  \[
    \ms X \to \ms X^{\text{rig}} \to X,
  \]
  where $\ms X^{\mathrm{rig}}$ has generically trivial stabilizers.
  It now follows that $\ms X^{\mathrm{rig}}$ is a global quotient
  \cite[Thm. 2.18]{MR1844577}. By \cite[Thm.\ 2.1]{MR2026412} (see also \cite[Cor.~4.5]{zbMATH07471377}), there
  is a finite flat cover $Z \to \ms X^{\mathrm{rig}}$, where $Z$ is a
  quasi-projective scheme. Base changing along this cover, we may
  replace $\ms X^{\mathrm{rig}}$ by $Z$
  \cite[Prop. 5.3(vii)]{2013arXiv1306.5418G}. Then $\ms X \to X$ is a
  separated Deligne--Mumford gerbe, where $X$ is a quasi-projective
  scheme over $\spec k$. The result now follows from Theorem
  \ref{T:DM-gerbes} and Gabber's Theorem
  \cite{deJong_result-of-Gabber}.
\end{proof}

\section{Faithful moduli spaces}\label{S:faithful}
If $V$ is a nonzero representation of the unipotent group $\U_{n,k}$
over a field $k$, then the set of fixed points $V^{\U_{n,k}}$ is
nonzero \cite[XVII, Prop.\ 3.2]{SGA3-II-NEW}. If $X$ is a quasi-affine
scheme, and $F$ is a nonzero quasi-coherent sheaf on $X$, then
$\Gamma(X,F)$ is nonzero. The following result shows that this
property generalizes to any locally R-unipotent morphism.
\begin{proposition}\label{prop:pushforward is faithful on objects}
  Let $f\colon X\to S$ be a locally R-unipotent morphism of algebraic
  stacks. If $F$ is quasi-coherent on $X$ and $f_*F=0$, then $F=0$.
\end{proposition}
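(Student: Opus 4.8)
The plan is to reduce to the known cases via descent. First I would check that the statement is local on $S$ for the fpqc topology: since $f$ is locally R-unipotent, there is an fpqc covering $p \colon S' \to S$ such that $f' \colon X' = X \times_S S' \to S'$ is R-unipotent. For a quasi-coherent sheaf $F$ on $X$ with $f_*F = 0$, flat base change gives $f'_*(q^*F) = p^*(f_*F) = 0$, where $q \colon X' \to X$ is the projection. If we know the result for $f'$, then $q^*F = 0$, and since $q$ is faithfully flat (as $p$ is), $F = 0$. So it suffices to treat the case where $f$ is R-unipotent and, refining the cover further by a smooth atlas, where $S$ is an affine scheme.

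Next, with $f \colon X \to S$ R-unipotent and $S$ affine, I would invoke Theorem \ref{T:char-Runi}: $f$ factors as $X \xrightarrow{h} \classB\U(\mathscr{L}) \to S$ with $h$ quasi-affine, and since $S$ is affine we may even take $\mathscr{L} = (\Orb_S, \dots, \Orb_S)$ so that $\classB\U(\mathscr{L}) = \classB\U_{n,S}$. Write $g \colon \classB\U_{n,S} \to S$ for the structure morphism, so $f = g \circ h$ and $f_* = g_* \circ h_*$. The strategy is now to handle the two factors separately: show that $h_*$ is faithful (kills no nonzero sheaf) because $h$ is quasi-affine, and show that $g_*$ is faithful because $\classB\U_{n,S} \to S$ has unipotent fibers, so that $f_*F = g_*(h_*F) = 0$ forces $h_*F = 0$ hence $F = 0$.

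For the quasi-affine factor: if $h \colon X \to Y$ is quasi-affine and $F$ is quasi-coherent on $X$ with $h_*F = 0$, then $F = 0$. This is standard — quasi-affine morphisms are in particular quasi-compact and separated with the property that the unit $\Orb \to$ (pushforward-then-pullback) detects sections affine-locally on the source; concretely one reduces to $Y$ affine, embeds $X$ as a quasi-compact open in $\Spec \Gamma(X, \Orb_X)$, and uses that a nonzero quasi-coherent sheaf on $\Spec A$ restricted to a quasi-compact open has some nonzero section, which is a global section of the pushforward. For the $\classB\U_{n,S}$ factor: a quasi-coherent sheaf on $\classB\U_{n,S}$ is a quasi-coherent $\Orb_S$-module with $\U_{n,S}$-action, and $g_*$ is the functor of taking $\U_{n,S}$-invariants; the claim is that a nonzero such sheaf has nonzero invariants. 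Since $S$ is affine this can be checked by writing the module as a filtered colimit of finitely generated $\U_{n,S}$-stable submodules and reducing, after further faithfully flat base change to geometric points, to the classical fact cited just before the proposition that a nonzero representation of $\U_{n,k}$ over a field $k$ has nonzero fixed points \cite[XVII, Prop.~3.2]{SGA3}; flat base change again propagates nonvanishing of invariants back down.

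The main obstacle I anticipate is the invariants step for $\classB\U_{n,S}$ over a general affine base rather than a field: the fixed-point functor is only left exact, not exact, and "nonzero implies nonzero invariants" is not a formal consequence of the fibral statement since invariants need not commute with base change in general. The way around this is to exploit that for \emph{unipotent} groups the situation is better — one can filter a $\U_{n,S}$-module by submodules whose successive quotients carry trivial action (this is exactly the content of being "trivially graded" flag-wise, and the relevant structure is built into the machinery of Example \ref{E:unipotent-triv-bundle}), so a nonzero module always has a nonzero trivial subquotient and in particular, chasing a nonzero element of a bottom graded piece, a nonzero invariant. Alternatively, and perhaps more cleanly, one checks nonvanishing of $g_*F$ after the faithfully flat base change $\Spec \Z \to \classB\U_{n,\Z}$ composed down, reducing the whole assertion to the fibral case already recorded in the text; I would present whichever of these is shortest.
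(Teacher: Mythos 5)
Your reduction to the case of an R-unipotent morphism over an affine base is exactly the paper's first step, but from there you take a genuinely different route: you factor $f$ through $\classB\U(\mathscr{L})$ via Theorem \ref{T:char-Runi} and treat the quasi-affine leg and the classifying-stack leg separately, whereas the paper never factors at all. It simply observes that $\Hom_{\Orb_X}(f^*N,F)=\Hom_{\Orb_S}(N,f_*F)=0$ for every quasi-coherent $N$ on $S$, deduces $\Hom_{\Orb_X}(V\otimes f^*W,F)=0$ for every complete $f$-graded flag $V$ by d\'evissage along the filtration (left exactness of $\Hom$ plus the fact that the graded pieces are pullbacks), and concludes from the $\Flag_f$-resolution property of $f$ supplied by Theorem \ref{T:char-Runi}\eqref{T:char-Runi:flag}. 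Your handling of the quasi-affine leg is correct (modulo the small slip that line bundles on an affine scheme need not be trivial, so you cannot take $\mathscr{L}=(\Orb_S,\dots,\Orb_S)$ without first shrinking $S$ further; this is harmless since the statement is fpqc-local).

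The one step that needs repair is the classifying-stack leg. Your first completion --- ``a nonzero module has a nonzero trivial subquotient, hence a nonzero invariant'' --- conflates subquotients with submodules: an invariant of a quotient of $M$ does not lift to an invariant of $M$, so a trivial subquotient is not enough. What you actually need is a nonzero map \emph{from} a pullback, i.e.\ a nonzero element of $\Hom(g^*W,M)=\Hom(W,g_*M)$, and the precise input producing this for every nonzero $M$ is Proposition \ref{P:ulflag}(2) (the $\Flag_{\Orb}$-resolution property of $\classB\U(\mathscr{L})\to S$ over affine $S$) combined with the d\'evissage above to see that $g_*M=0$ forces $g_*(V^\vee\otimes M)=0$ for every trivially graded $V$. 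Once you cite that, your argument closes --- but at that point you have essentially reproduced the paper's proof on the base of the factorization, so the factorization buys nothing. Your second completion, checking nonvanishing of $g_*F$ after base change along $\Spec\Z\to\classB\U_{n,\Z}$, does not work: pushforward to $S$ (i.e.\ taking invariants) does not commute with that base change, which is precisely the obstruction you yourself identified a few lines earlier, so the fibral statement from \cite[XVII, Prop.~3.2]{SGA3} cannot be propagated this way.
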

\begin{proof}
  By flat base change we may assume that $f$ is R-unipotent and $S$ is
  affine. Let $N$ be a quasi-coherent $\Orb_S$-module; then
  $\Hom_{\Orb_X}(f^*N,F) = \Hom_{\Orb_S}(N,f_*F) = 0$.  It follows
  that if $V$ belongs to $\Flag_f$, then 
  $\Hom_{\Orb_X}(V,F)=0$. As $f$ is R-unipotent, it has the
  $\Flag_f$-resolution property (Theorem \ref{T:char-Runi}). Hence, $F=0$.
\end{proof} 
The following definition is due to Alper \cite{Jarod-faithful}.
\begin{definition}\label{D:faithful-moduli-space}
  Let $f\colon X \to S$ be a quasi-compact and quasi-separated
  morphism of algebraic stacks. We say that $f$ is a \emph{faithful moduli space} if:
\begin{enumerate}
\item \label{DI:faithful-moduli-space:stein} the natural map $\Orb_S \to f_*\Orb_X$ is an isomorphism (i.e., $f$ is Stein); and
\item \label{DI:faithful-moduli-space:faithful} if $F \in \QCoh(X)$ and $f_*F = 0$, then $F=0$
  (i.e., $f_*$ is zero-reflecting).
\end{enumerate}   
\end{definition}
\begin{remark}\label{R:faith-flat-bc}
  Faithful moduli spaces are compatible with quasi-affine flat base
  change. Indeed, if $p \colon S' \to S$ is flat and quasi-affine and
  $f'\colon X'=X\times_S S' \to S'$ is the induced morphism, then flat
  base change says that \eqref{DI:faithful-moduli-space:stein} is
  preserved. Let $p' \colon X'\to X$ be the induced morphism; then
  $p_*$ and $p'_*$ are zero-reflecting on quasi-coherent sheaves because
  they are quasi-affine. Hence, $p_*f'_*\simeq f_*p'_*$ is
  zero-reflecting on quasi-coherent sheaves, and so $f'_*$ is
  zero-reflecting on quasi-coherent sheaves, which proves
  \eqref{DI:faithful-moduli-space:faithful}.  In particular, if
  $f\colon X \to S$ is a faithful moduli space and $S$ has
  quasi-affine diagonal, then $f$ remains a faithful moduli space
  after arbitrary flat base change on $S$.
\end{remark}
\begin{Example}\label{E:stein-loc-R-faithful}
  A Stein and locally R-unipotent morphism is a faithful moduli space
  (Proposition \ref{prop:pushforward is faithful on objects}). Also if
  $S$ is a normal scheme, then any open immersion $U \subseteq S$ with
  complement of codimension $\geq 2$ is Stein and R-unipotent. In
  particular, there is not necessarily an induced bijection on closed
  points, so some may find the term ``moduli space'' misleading.
\end{Example}

\begin{remark}\label{rem:faithfulnotfaithful}
  Note that a faithful moduli space $f$ need not imply that the
  functor $f_*$ is faithful. Indeed, $f_*$ being faithful forces $f$
  to be quasi-affine (because the structure sheaf would be
  $f$-generating). Moreover, $f$ being Stein implies that the quasi-affine
  morphism $f$ is an open immersion. Thus, the second example in
  Example \ref{E:stein-loc-R-faithful} is essentially the only example
  of a faithful moduli space for which $f_*$ is faithful.
\end{remark}

 \begin{remark}
   The converse to Proposition \ref{prop:pushforward is faithful on
     objects} does not hold: if $X$ denotes affine $n$-space with a
   doubled origin and $f\colon X \to \mathbf{A}^n$ is the natural map,
   $f$ is a faithful moduli space but is not locally $R$-unipotent
   unless $n=1$ (see Example \ref{E:nonseparatedline}). Indeed, it
   cannot be locally $R$-unipotent when $n \geq 2$, because Theorem
   \ref{T:flat-local-unipotent-morphism} would imply that $X$ has
   affine diagonal. On the other hand, $f$ is Stein. To see that $f_*$ is
   zero-reflecting, use Lemma \ref{prop:qfinitenonsep}.
 \end{remark}

 \begin{lemma}\label{prop:qfinitenonsep}
   Let $f\colon X \to Y$ be a
   morphism of algebraic stacks which is representable and
   quasi-finite, then $f_*\colon \QCoh(X) \to \QCoh(Y)$ is
   zero-reflecting.
 \end{lemma}
 \begin{proof}
   By passing to a smooth cover of $Y$, we may assume that $Y$ is an
   affine scheme and $X$ is an algebraic space. Let $F$ be a
   quasi-coherent $\Orb_X$-module with $f_*F=0$; then the same is true
   for any of its finite-type subsheaves, so we may assume $F$ is also of
   finite-type. If $F \neq 0$, then we may replace $X$ with
   $V(\Ann_{\Orb_X}(F))$ and then $Y$ with
   $V(\ker(\Orb_Y \to f_*\Orb_X))$. Then 
   $\Ann_{\Orb_X}(F) = 0$ and $f$ has schematically dense image. Since
   $f$ is quasi-finite, there is a non-empty open subset
   $V \subseteq Y$ so that $f^{-1}(V) \to V$ is finite. Then
   $F|_{f^{-1}(V)}=0$ and so $F=0$, which is a
   contradiction.
 \end{proof}

\begin{Example}\label{E:group-faithful}
  Let $k$ be a field. Let $G$ be an algebraic group scheme over
  $k$. Then $\classB G \to \spec k$ is a faithful moduli space if and
  only if $G$ is unipotent. One implication follows from Example
  \ref{E:stein-loc-R-faithful}. For the other: if $V$ is a
  finite-dimensional representation of $G$ over $k$; then
  $V^G \neq 0$. By induction, $V$ is a trivially graded flag on
  $\classB G$. Taking $V$ to be a faithful representation of $G$, we
  see that $G$ admits a faithful flag. In particular, it is unipotent.
\end{Example}
\begin{Example}\label{E:faithful-comp}
  Let $Y \xrightarrow{g} X \xrightarrow{f} S$ be quasi-compact and
  quasi-separated morphisms of algebraic stacks.
  \begin{enumerate}
  \item\label{E:faithful-comp:comp} If $f$ and $g$ are faithful moduli spaces, then $f\circ g$ is
    a faithful moduli space.
  \item\label{E:faithful-comp:build} If
    $f_*$ is zero-reflecting, then $X \to \underline{\spec}_S(f_*\Orb_X)$ is a
    faithful moduli space.
  \end{enumerate}
\end{Example}
The following result is a version of Proposition
\ref{P:stabilizer-geom-unipotent} for faithful moduli spaces.
\begin{proposition}\label{P:res-gerb-faithful}
  Let $f \colon X \to S$ be a faithful moduli space with affine stabilizers. Then the relative
  geometric stabilizers of $f$ are unipotent groups. 
\end{proposition}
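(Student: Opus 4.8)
The plan is to reduce to the case of a point on the source and then invoke Example~\ref{E:group-faithful}. First I would observe that the property of having unipotent geometric stabilizers can be checked one geometric point at a time, so let $\bar x \colon \Spec k \to X$ be a geometric point with $k$ algebraically closed, and let $G_{\bar x} = \Aut_X(\bar x)$, which is affine over $k$ by hypothesis. Composing $\bar x$ with $f$ gives a point $\bar s \colon \Spec k \to S$. The residual gerbe at $\bar x$ provides a representable monomorphism $\classB G_{\bar x} \to X$, and the goal is to deduce that $\classB G_{\bar x} \to \Spec k$ is a faithful moduli space, since then Example~\ref{E:group-faithful} forces $G_{\bar x}$ to be unipotent.

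The main obstacle is precisely this descent of the faithful moduli space property along $\classB G_{\bar x} \to X$. Unlike good moduli spaces, faithful moduli spaces are only known to be compatible with \emph{flat} base change (Example~\ref{E:faith-flat-bc}), and $\Spec k \to S$ need not be flat, so I cannot simply base change $f$ to $\bar s$ and work fiberwise. Instead I would argue directly with the two defining conditions. For conservativity of the pushforward along $g \colon \classB G_{\bar x} \to \Spec k$: if $F$ is a nonzero quasi-coherent sheaf on $\classB G_{\bar x}$, pushing it forward along the affine (since $G_{\bar x}$ is affine, the residual gerbe is cohomologically affine over $k$, hence the map to $X$ is affine onto a locally closed substack) morphism to $X$ and then applying $f_*$; combining with the fact that $f_*$ is conservative and that the residual gerbe embeds, one concludes $g_*F \neq 0$. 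Here one uses that a nonzero sheaf on the residual gerbe, pushed into $X$, remains nonzero (the pushforward along a monomorphism of stacks is faithful on objects when the map is, e.g., quasi-affine), and that $f_*$ composed with this is computed by $g_*$ after passing to the residue field appropriately.

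For the Stein condition on $g$: $\classB G_{\bar x}$ is cohomologically affine over $k$ exactly when $G_{\bar x}$ is linearly reductive, which is \emph{not} automatic, so in fact $\Orb_{\Spec k} \to g_*\Orb_{\classB G_{\bar x}}$ might fail to be an isomorphism if $G_{\bar x}$ were, say, a nontrivial torus. But this is the point: conservativity of $f_*$ already rules out such stabilizers. More carefully, I would first use conservativity alone to see that $G_{\bar x}$ cannot contain a subgroup with nonzero coinvariants-but-zero-invariants phenomenon failing — concretely, if $V$ is any nonzero finite-dimensional $G_{\bar x}$-representation then the associated sheaf on $\classB G_{\bar x}$ has nonzero pushforward to $X$, and chasing through $f_*$ shows $V^{G_{\bar x}} \neq 0$. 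A group scheme of finite type over an algebraically closed field all of whose nonzero representations have nonzero invariants is unipotent \cite[XVII, Prop.~3.2]{SGA3} together with the structure theory; this directly yields unipotence of $G_{\bar x}$ without separately establishing the Stein property for $g$. So the cleanest route is: (1) reduce to a geometric point; (2) show every nonzero $G_{\bar x}$-representation has nonzero invariants, by transporting it to a sheaf on $X$ via the residual gerbe and applying conservativity of $f_*$ (using that the residual gerbe embeds quasi-affinely into $X$, so its pushforward detects nonvanishing, and that $f_* \circ (\text{res. gerbe})_*$ sees the invariants); (3) conclude unipotence of $G_{\bar x}$ from this fixed-point property. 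The subtle point throughout, which I would need to handle with care, is correctly identifying $f_*$ of the pushforward of a representation-sheaf from the residual gerbe with the space of $G_{\bar x}$-invariants — this requires knowing the image of the residual gerbe in $X$ and computing a pushforward there, which is where I expect the real work to lie.
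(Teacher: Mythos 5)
Your proposal is correct and follows essentially the same route as the paper: pass to a geometric point, use the quasi-affine residual gerbe $\classB G_{\bar x}\to X$ together with conservativity of $f_*$ to see that every nonzero $G_{\bar x}$-representation has nonzero invariants, and conclude unipotence exactly as in Example~\ref{E:group-faithful}. One aside in your write-up is mistaken but immaterial: $\classB G\to \spec k$ is automatically Stein for \emph{any} affine group scheme $G$ over $k$, since $\Gamma(\classB G,\Orb_{\classB G})=k^{G}=k$; this has nothing to do with linear reductivity or cohomological affineness.
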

\begin{proof}
  We may assume that $S$ is an affine scheme; then $X$ is
  quasi-compact and quasi-separated. Let
  $\bar{x} \colon \spec k \to X$ be a geometric point of $X$, where
  $k$ is an algebraically closed field. Let $G= \Aut(\bar{x})$, which
  is a group scheme of finite type over $\spec k$. There is an
  induced quasi-affine morphism $\classB G \to X$
  \cite[Thm.~B.2]{MR2774654} and so $\classB G \to \spec k$ is a
  faithful moduli space (Example
  \ref{E:faithful-comp}\eqref{E:faithful-comp:build}). By Example
  \ref{E:group-faithful}, the result follows.
\end{proof}
The following lemma turns out to be very useful.
\begin{lemma}\label{L:disjoint-local}
  Let $f \colon X \to S$ be a quasi-compact, quasi-separated and Stein
  morphism of algebraic stacks. If $I \subseteq \Orb_X$ is a
  quasi-coherent sheaf of ideals, then
  $V(I) \subseteq f^{-1}(V(f_*I))$. 
\end{lemma}
\begin{proof}
  Indeed, the quasi-coherent sheaf defining the closed substack $f^{-1}(V(f_*I)) \subseteq X$ is the
  image of $f^*f_*I \to \Orb_X$, which factors through $I$. 
\end{proof}
\begin{proposition}\label{P:faithful-generic-gerbe}
  Let $f \colon X \to S$ be a faithful moduli space with affine
  stabilizers. If $S$ is reduced and quasi-separated with affine stabilizers, then there is a
  non-empty open $U \subseteq S$ with $f^{-1}(U) \to U$ a locally
  R-unipotent gerbe.
\end{proposition}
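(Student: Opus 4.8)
The plan is to pass to the generic point of $S$, show that the generic fibre of $f$ is a gerbe banded by a unipotent group, and then spread out to a dense open.

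Let $\eta = \Spec K$ be the generic point of the integral algebraic stack $S$. Since $\eta \to S$ is flat and faithful moduli spaces are stable under flat base change (Example \ref{E:faith-flat-bc}), the base change $f_\eta \colon X_\eta = X\times_S \Spec K \to \Spec K$ is again a faithful moduli space with affine stabilizers. The first claim is that $f_\eta$ is a gerbe. Apply Lemma \ref{L:disjoint-local} over the base $\Spec K$: as $K$ is a field we have $\Spec K_a = \Spec K$ for every $a\in K\setminus\{0\}$, so the lemma forces $V(I) = \emptyset$, hence $I = \Orb_{X_\eta}$, for \emph{every} nonzero quasi-coherent ideal $I\subseteq \Orb_{X_\eta}$. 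Thus $X_\eta$ has no proper nonzero quasi-coherent sheaf of ideals; in particular it is reduced (take $I$ to be the nilradical) and $|X_\eta|$ carries the indiscrete topology, hence — topological spaces of algebraic stacks being sober — is a single point. A reduced, quasi-separated algebraic stack with affine stabilizers whose space is a single point is a gerbe over $\Spec \kappa$ for a field $\kappa$: the residual gerbe at the unique point is a closed substack with full support, hence equals the whole stack by reducedness. Finally $\kappa = \Gamma(X_\eta, \Orb_{X_\eta}) = K$ because $f_\eta$ is Stein.

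Next, $f_\eta$ is locally R-unipotent. By Proposition \ref{P:res-gerb-faithful} the geometric stabilizers of $f_\eta$ are unipotent, so for any algebraically closed field $\Omega$ and any $\Spec \Omega \to \Spec K$ the fibre $X_\eta\times_{\Spec K}\Spec \Omega$ is a neutral gerbe $\classB G$ with $G$ a unipotent group over $\Omega$, hence R-unipotent over $\Omega$ by Example \ref{R:unipotent-group-thesame}; that is, $f_\eta$ is geometrically R-unipotent. By Corollary \ref{C:geometric-unipotent-surj-local}, $f_\eta$ is R-unipotent. Concretely (Theorem \ref{T:char-Runi}, using $\Pic(\Spec K) = 0$, so the relevant sequence of line bundles is trivial), there is a complete, trivially graded, $f_\eta$-R-faithful flag $V_\bullet$ on $X_\eta$, equivalently a representable quasi-affine factorization $X_\eta \to \classB \U_{n,K}\to \Spec K$.

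It remains to spread out from $\eta$ to a dense open $U\subseteq S$. Writing $X_\eta = \varprojlim_{U\ni\eta} f^{-1}(U)$ as a filtered limit with affine (indeed open-immersion) transition maps, the flag $V_\bullet$ — together with its triviality, completeness, and R-faithfulness — descends to a flag with the same properties on $f^{-1}(U)$ for a suitable dense open $U$, so $f^{-1}(U)\to U$ is R-unipotent, in particular locally R-unipotent; and shrinking $U$ further, the morphism $f^{-1}(U)\to U$ becomes flat, locally of finite presentation, surjective, with flat, locally finitely presented, surjective diagonal (each holds over $\eta$), i.e.\ it is a gerbe. I expect this last step to be the main technical point: since $f$ is only assumed quasi-compact and quasi-separated, making the spreading-out rigorous — especially the generic flatness used to conclude that $f^{-1}(U)\to U$ is a gerbe — requires a preliminary reduction, via Noetherian approximation, to a situation where $f$ is of finite type. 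The conceptual core, by contrast, is the one-line deduction from Lemma \ref{L:disjoint-local} that the generic fibre of a faithful moduli space has no proper nonzero quasi-coherent ideals, and is therefore a single-point (gerbe) stack.
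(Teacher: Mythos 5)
The first half of your argument is sound and is essentially the endgame of the paper's proof run in reverse: localizing Lemma \ref{L:disjoint-local} at the generic point to see that $X_\eta$ has no nonzero proper quasi-coherent ideal, hence is a reduced one-point stack with affine stabilizers and therefore a gerbe over $K$ (this last deduction is exactly the structural input the paper cites from Hall--Rydh), and then getting R-unipotence of $X_\eta\to\Spec K$ from Proposition \ref{P:res-gerb-faithful} together with Corollary \ref{C:geometric-unipotent-surj-local}. That part I believe.

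The gap is the spreading-out step, and it is not merely technical. The hypotheses give only that $f$ is quasi-compact and quasi-separated, so $f$ need not be of finite presentation, and flatness, finite presentation, and surjectivity of the diagonal do \emph{not} pass from the generic fibre to a neighbourhood in this generality: generic flatness requires finite type over the integral base. Your proposed remedy --- Noetherian approximation of $f$ --- is not available either: approximation of a quasi-compact, quasi-separated morphism of stacks requires the source to be of global type, which is not among the hypotheses, and in any case the faithful-moduli-space conditions would not pass to the approximating stages. (What \emph{can} be salvaged by limit methods is the R-unipotence: writing $X_\eta=\varprojlim_a f^{-1}(\Spec A_a)$ as a limit with affine transition maps, the trivially graded complete flag and the quasi-affineness of its frame bundle descend to some $f^{-1}(U)$ by \cite[Thm.~C]{rydh-2009}, exactly as in the proof of Theorem \ref{thm:totgross}; it is specifically the gerbe conclusion that your argument cannot reach.) The paper sidesteps this by never spreading out from $\eta$: it first invokes the structural result that $X_{\red}$ contains a dense open substack $\mathcal{G}$ which is a gerbe with affine coarse space $T$, then applies Lemma \ref{L:disjoint-local} twice --- to the ideal of the complement of $\mathcal{G}$ and then to the nilradical --- to arrange $X=\mathcal{G}$ over a dense open of $S$. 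The Stein condition then identifies $T$ with $S$, so $f$ is a gerbe over that open, and finite presentation is obtained as a \emph{consequence} rather than assumed; only then does the paper localize at the generic point. To repair your proof you would need to import that step, or some other argument carried out on $X_{\red}$ over $S$ rather than on the generic fibre.
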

\begin{proof}
  We may assume that $S$ is quasi-compact; then
  \cite[Prop.~2.6(i)]{hallj_dary_alg_groups_classifying} implies that
  there is a non-empty open quasi-compact $U \subseteq S$ with affine
  diagonal. Let $\spec A \to U$ be a smooth cover; then the
  composition $\spec A \to U \to S$ is quasi-affine and smooth. It
  follows from Remark \ref{R:faith-flat-bc} that we may now assume
  that $S=\spec A$, where $A$ is reduced.

  Now there is a non-empty open immersion
  $\mathcal{G} \subseteq X_{\red}$ such that $\mathcal{G}$ is a gerbe
  with the resolution property with affine coarse space
  $g \colon \mathcal{G} \to T$ (combine \cite[Tags \spref{06RC} \&
  \spref{06NH}]{stacks-project} with \cite[Prop.\
  2.6(i)]{hallj_dary_alg_groups_classifying}).  Let
  $I \subseteq \Orb_X$ be a quasi-coherent ideal sheaf defining the
  reduced closed complement $Z$ of $\mathcal{G} \subseteq X$. If
  $Z\neq \emptyset$, then $\Gamma(X,\Orb_X) \to \Gamma(Z,\Orb_Z)$ is
  non-zero because it is a ring homomorphism and
  $\Gamma(Z,\Orb_Z) \neq 0$. In particular,
  $\Gamma(X,I) \subsetneq A$. Since $I\neq 0$, it follows that we may
  choose $a \in \Gamma(X,I)\setminus \{0\}$ that is not a unit. Since
  $A$ is reduced, $a$ is not nilpotent and so
  $\spec A_a \neq \emptyset$. But Lemma \ref{L:disjoint-local} implies
  that $Z \cap f^{-1}(\spec A_a)=\emptyset$ and so we may replace $A$
  by $A_a$ and assume that $Z=\emptyset$. Hence, 
  $\mathcal{G} =X_{\red}\subseteq X$ is a surjective closed immersion
  with defining ideal $J$. By Lemma \ref{L:disjoint-local} again, we
  may further shrink $S$ so that $J=0$ and $\mathcal{G} = X$ is a
  gerbe.  But the composition $\Orb_S \to t_*\Orb_T \to f_*\Orb_X$ is
  an isomorphism, as is $\Orb_T \to g_*\Orb_X$; hence, $T \simeq S$
  and $X \to S$ is of finite presentation. By standard limit methods
  and Theorem \ref{T:char-Runi}, we may replace $S$ by the spectrum of
  the localization at a minimal prime of $A$, which is a field \cite[Tag \spref{00EU}]{stacks-project}. By Example
  \ref{E:group-faithful} and 
Remark \ref{rem:overafieldallagree} the result follows.
\end{proof}

 We conclude with the following pleasant corollary.
\begin{corollary}\label{C:faithful-field}
  Let $k$ be a field. Let $X \to \spec k$ be a faithful moduli
  space. Then $X \to \spec k$ is an R-unipotent gerbe. In particular,
  if $X$ is an algebraic space, then $X \to \spec k$ is an
  isomorphism.
\end{corollary}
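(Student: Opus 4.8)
The plan is to apply Proposition \ref{P:faithful-generic-gerbe} together with the machinery of Theorem \ref{T:flat-local-unipotent-morphism}. First, since $\spec k$ is quasi-compact and integral and $X \to \spec k$ is a faithful moduli space, we may invoke Proposition \ref{P:faithful-generic-gerbe} to obtain a dense open $U \subseteq \spec k$ over which $f^{-1}(U) \to U$ is a locally R-unipotent gerbe. But $\spec k$ is a single point, so the only dense open is $\spec k$ itself; hence $X \to \spec k$ is already a locally R-unipotent gerbe. (One should check that the hypothesis ``affine stabilizers'' in Proposition \ref{P:faithful-generic-gerbe} is automatic here: by Proposition \ref{P:res-gerb-faithful}, or more directly because the residual gerbe at the unique point has an affine automorphism group scheme once we know it is a gerbe, the stabilizers are affine — alternatively, one can first run the argument of Proposition \ref{P:faithful-generic-gerbe} which only needs that $X_{\mathrm{red}}$ contains a gerbe with resolution property and affine coarse space, which holds over a field.)

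Next, I would upgrade ``locally R-unipotent'' to ``R-unipotent''. Since $\spec k$ is affine, it has the $\Flag$-resolution property (Example \ref{E:flag-resolution}(1)), so Theorem \ref{T:flat-local-unipotent-morphism}(2) applies and yields that $f \colon X \to \spec k$ is R-unipotent. Combined with the previous paragraph, $X \to \spec k$ is an R-unipotent gerbe. Equivalently, by Example \ref{E:group-faithful} (or Remark \ref{R:unipotent-group-thesame}), $X \simeq \classB G$ for a unipotent group scheme $G$ over $k$.

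Finally, for the last sentence: if $X$ is an algebraic space, then the gerbe $X \to \spec k$ has trivial stabilizers, so $G$ is the trivial group and $X \to \spec k$ is an isomorphism. I expect the only genuine subtlety to be justifying the affine-stabilizers hypothesis needed to invoke Proposition \ref{P:faithful-generic-gerbe}; everything else is a direct citation. This can be circumvented by observing that over a field the construction in the proof of Proposition \ref{P:faithful-generic-gerbe} (producing a gerbe with affine coarse space inside $X_{\mathrm{red}}$, then using Lemma \ref{L:disjoint-local} twice to show it is all of $X$) goes through without a separate affineness assumption on the stabilizers, since the intermediate gerbe is constructed to have the resolution property.
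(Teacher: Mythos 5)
Your main line is the intended one: apply Proposition \ref{P:faithful-generic-gerbe} with $S=\spec k$, note that the only dense open of $\spec k$ is $\spec k$ itself, and then upgrade local R-unipotence to R-unipotence via Theorem \ref{T:flat-local-unipotent-morphism}(2), since $\spec k$ is affine and hence has the $\Flag$-resolution property (Example \ref{E:flag-resolution}); the final sentence about algebraic spaces is then immediate. Modulo one point, this is correct and is exactly how the corollary follows from the preceding proposition. (A very minor quibble: over a non-algebraically-closed field a gerbe need not be neutral, so ``$X\simeq\classB G$'' is an overstatement, but nothing in your argument uses it.)

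The point where you are not on solid ground is the affine-stabilizers hypothesis, and none of your proposed workarounds actually closes it. Citing Proposition \ref{P:res-gerb-faithful} is circular, since that proposition itself assumes affine stabilizers. Knowing that $X$ is a gerbe over $\spec k$ only tells you the band is a group algebraic space over a field, hence a group scheme, but not that it is affine. And the first step of the proof of Proposition \ref{P:faithful-generic-gerbe} --- producing a dense open gerbe with the resolution property --- genuinely uses affine stabilizers (a stack with the resolution property is a quotient $[U/\GL_n]$, so has affine stabilizers; the cited input from \cite{hallj_dary_alg_groups_classifying} requires this). In fact the hypothesis cannot be dropped from the statement: if $E$ is an elliptic curve over $k$, then every coherent sheaf on $\classB E$ corresponds to a homomorphism $E\to\GL_n$, which is trivial since $E$ is proper and connected; as every quasi-coherent sheaf on the noetherian stack $\classB E$ is a filtered colimit of coherent subsheaves, $\Gamma(\classB E,-)$ is conservative and sends $\Orb_{\classB E}$ to $k$. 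Thus $\classB E\to\spec k$ satisfies Definition \ref{D:faithful-moduli-space} but is not R-unipotent (its diagonal is not quasi-affine). So the corollary should be read with the same ``affine stabilizers'' hypothesis as Proposition \ref{P:faithful-generic-gerbe}; with that hypothesis in place your argument is complete, and the hypothesis is automatic in the paper's only application of the corollary (the proof of Proposition \ref{prop:unipotent + proper = finite}, where the relevant stack is an algebraic space).
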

We conclude with the proof of our characterization of proper and
representable geometrically R-unipotent morphisms.
\begin{proof}[Proof of Proposition \ref{prop:unipotent + proper = finite}]
  We may assume that $S$ is affine. By Zariski's Main Theorem, it
  suffices to prove that $f$ is quasi-finite. Hence, we may further
  reduce to the situation where $S$ is the spectrum of an
  algebraically closed field $k$ and $f$ is R-unipotent. Let
  $X\to S'\to S$ be the Stein factorization of $f$. Then $S' \to S$ is
  finite, the morphism $X\to S'$ is again R-unipotent, and so we may
  further reduce to the situation where $X$ is geometrically
  connnected and $f$ is Stein. It follows from Example
  \ref{E:stein-loc-R-faithful} that $X \to \spec k$ is a faithful
  moduli space. By Corollary \ref{C:faithful-field}, $X \to \spec k$
  is an isomorphism, which gives the claim.
\end{proof}
\bibliographystyle{amsalpha}
\bibliography{references}

\providecommand{\bysame}{\leavevmode\hbox to3em{\hrulefill}\thinspace}
\providecommand{\MR}{\relax\ifhmode\unskip\space\fi MR }
\providecommand{\MRhref}[2]{%
  \href{http://www.ams.org/mathscinet-getitem?mr=#1}{#2}
}
\providecommand{\href}[2]{#2}
\begin{thebibliography}{EHKV01}

\bibitem[AD07]{MR2335246}
A.~Asok and B.~Doran, \emph{On unipotent quotients and some {$\Bbb
  A^1$}-contractible smooth schemes}, Int. Math. Res. Pap. IMRP \textbf{2007}
  (2007), no.~2.

\bibitem[Alp13]{2008arXiv0804.2242A}
J.~Alper, \emph{{Good moduli spaces for Artin stacks}}, Ann. Inst. Fourier
  (Grenoble) \textbf{63} (2013), no.~6, 2349--2402.

\bibitem[Alp14]{MR3272912}
\bysame, \emph{Adequate moduli spaces and geometrically reductive group
  schemes}, Algebr. Geom. \textbf{1} (2014), no.~4, 489--531. \MR{3272912}

\bibitem[Alp17]{Jarod-faithful}
\bysame, \emph{Faithful moduli spaces}, 2017, private communication.

\bibitem[AM76]{AMduality}
M.~Artin and J.~S. Milne, \emph{Duality in the flat cohomology of curves},
  Inventiones mathematicae \textbf{35} (1976), no.~1, 111--129.

\bibitem[AOV08]{MR2427954}
D.~Abramovich, M.~Olsson, and A.~Vistoli, \emph{Tame stacks in positive
  characteristic}, Ann. Inst. Fourier (Grenoble) \textbf{58} (2008), no.~4,
  1057--1091. \MR{MR2427954 (2009c:14002)}

\bibitem[BT84]{MR756316}
F.~Bruhat and J.~Tits, \emph{Groupes r\'{e}ductifs sur un corps local. {II}.
  {S}ch\'{e}mas en groupes. {E}xistence d'une donn\'{e}e radicielle
  valu\'{e}e}, Inst. Hautes \'{E}tudes Sci. Publ. Math. (1984), no.~60,
  197--376. \MR{756316}

\bibitem[Con10]{mathoverflow_groups-over-dual-numbers}
B.~Conrad, \emph{Smooth linear algebraic groups over the dual numbers},
  MathOverflow, 2010, \url{http://mathoverflow.net/q/22078}.

\bibitem[DHM22]{zbMATH07471377}
N.~Deshmukh, A.~Hogadi, and S.~Mathur, \emph{Quasi-affineness and the
  1-resolution property}, Int. Math. Res. Not. \textbf{2022} (2022), no.~3,
  2224--2249.

\bibitem[dJ03]{deJong_result-of-Gabber}
A.~J. de~Jong, \emph{A result of {G}abber}, preprint available at
  \url{http://www.math.columbia.edu/~dejong/}, 2003, p.~9.

\bibitem[EAO13]{MR3079799}
S.~Estrada, P.~A.~G. Asensio, and S.~Odaba\c{s}\i, \emph{A {L}azard-like
  theorem for quasi-coherent sheaves}, Algebr. Represent. Theory \textbf{16}
  (2013), no.~4, 1193--1205. \MR{3079799}

\bibitem[EHKV01]{MR1844577}
D.~Edidin, B.~Hassett, A.~Kresch, and A.~Vistoli, \emph{Brauer groups and
  quotient stacks}, Amer. J. Math. \textbf{123} (2001), no.~4, 761--777.
  \MR{1844577 (2002f:14002)}

\bibitem[Gab81]{MR611868}
O.~Gabber, \emph{Some theorems on {A}zumaya algebras}, The {B}rauer group
  ({S}em., {L}es {P}lans-sur-{B}ex, 1980), Lecture Notes in Math., vol. 844,
  Springer, Berlin-New York, 1981, pp.~129--209. \MR{611868}

\bibitem[Gir71]{giraud1966cohomologie}
J.~Giraud, \emph{Cohomologie non ab\'elienne}, Springer-Verlag, Berlin-New
  York, 1971, Die Grundlehren der mathematischen Wissenschaften, Band 179.
  \MR{0344253}

\bibitem[GP08]{SGA3-II-NEW}
P.~Gille and P.~Polo (eds.), \emph{Sch\'emas en groupes ({SGA} 3). {T}ome {II}.
  {G}roupes de type multiplicatif, et structure des schémas en groupes
  généraux}, annotated ed., 2008, S\'eminaire de G\'eom\'etrie Alg\'ebrique
  du Bois Marie 1962--64. [Algebraic Geometry Seminar of Bois Marie 1962--64],
  A seminar directed by M. Demazure and A. Grothendieck with the collaboration
  of M. Artin, J.-E. Bertin, P. Gabriel, M. Raynaud and J-P. Serre, available
  at \url{https://webusers.imj-prg.fr/~patrick.polo/SGA3/}.

\bibitem[GP11a]{SGA3-I-NEW}
P.~Gille and P.~Polo (eds.), \emph{Sch\'emas en groupes ({SGA} 3). {T}ome {I}.
  {P}ropri\'et\'es g\'en\'erales des sch\'emas en groupes}, annotated ed.,
  Documents Math\'ematiques (Paris) [Mathematical Documents (Paris)], vol.~7,
  Soci\'et\'e{} Math\'ematique de France, Paris, 2011, S\'eminaire de
  G\'eom\'etrie Alg\'ebrique du Bois Marie 1962--64. [Algebraic Geometry
  Seminar of Bois Marie 1962--64], A seminar directed by M. Demazure and A.
  Grothendieck with the collaboration of M. Artin, J.-E. Bertin, P. Gabriel, M.
  Raynaud and J-P. Serre. \MR{2867621}

\bibitem[GP11b]{SGA3-III-NEW}
P.~Gille and P.~Polo (eds.), \emph{Sch\'emas en groupes ({SGA} 3). {T}ome
  {III}. {S}tructure des sch\'emas en groupes r\'eductifs}, annotated ed.,
  Documents Math\'ematiques (Paris) [Mathematical Documents (Paris)], vol.~8,
  Soci\'et\'e{} Math\'ematique de France, Paris, 2011, S\'eminaire de
  G\'eom\'etrie Alg\'ebrique du Bois Marie 1962--64. [Algebraic Geometry
  Seminar of Bois Marie 1962--64], A seminar directed by M. Demazure and A.
  Grothendieck with the collaboration of M. Artin, J.-E. Bertin, P. Gabriel, M.
  Raynaud and J-P. Serre. \MR{2867622}

\bibitem[Gro95]{Brauer2}
A.~Grothendieck, \emph{Le groupe de {B}rauer. {II}. {T}h\'eorie cohomologique},
  S\'eminaire {B}ourbaki, vol. 9, Exp.\ No.\ 297, Soc. Math. France, Paris,
  1995, pp.~287--307. \MR{1608805}

\bibitem[Gro12]{gross_2012}
P.~Gross, \emph{The resolution property of algebraic surfaces}, Compositio
  Mathematica \textbf{148} (2012), no.~1, 209--226.

\bibitem[Gro17]{2013arXiv1306.5418G}
\bysame, \emph{Tensor generators on schemes and stacks}, Algebr. Geom.
  \textbf{4} (2017), no.~4, 501--522.

\bibitem[Hov04]{MR2066503}
M.~Hovey, \emph{Homotopy theory of comodules over a {H}opf algebroid}, Homotopy
  theory: relations with algebraic geometry, group cohomology, and algebraic
  {$K$}-theory, Contemp. Math., vol. 346, Amer. Math. Soc., Providence, RI,
  2004, pp.~261--304. \MR{2066503}

\bibitem[HR15]{hallj_dary_alg_groups_classifying}
J.~Hall and D.~Rydh, \emph{Algebraic groups and compact generation of their
  derived categories of representations}, Indiana Univ. Math. J. \textbf{64}
  (2015), no.~6, 1903--1923.

\bibitem[HR17]{perfect_complexes_stacks}
\bysame, \emph{Perfect complexes on algebraic stacks}, Compositio Math.
  \textbf{153} (2017), no.~11, 2318--2367.

\bibitem[Kap52]{MR46349}
I.~Kaplansky, \emph{Modules over {D}edekind rings and valuation rings}, Trans.
  Amer. Math. Soc. \textbf{72} (1952), 327--340. \MR{46349}

\bibitem[KV04]{MR2026412}
A.~Kresch and A.~Vistoli, \emph{On coverings of {D}eligne-{M}umford stacks and
  surjectivity of the {B}rauer map}, Bull. London Math. Soc. \textbf{36}
  (2004), no.~2, 188--192. \MR{2026412}

\bibitem[Laz64]{MR0168625}
D.~Lazard, \emph{Sur les modules plats}, C. R. Acad. Sci. Paris \textbf{258}
  (1964), 6313--6316. \MR{0168625 (29 \#5883)}

\bibitem[LMB00]{LMB}
G.~Laumon and L.~Moret-Bailly, \emph{Champs alg\'{e}briques}, Ergebnisse der
  Mathematik und ihrer Grenzgebiete. 3. Folge. A Series of Modern Surveys in
  Mathematics [Results in Mathematics and Related Areas. 3rd Series. A Series
  of Modern Surveys in Mathematics], vol.~39, Springer-Verlag, Berlin, 2000.
  \MR{1771927}

\bibitem[Mat21]{mathur2021resolution}
S.~Mathur, \emph{The resolution property via {A}zumaya algebras}, J. Reine
  Angew. Math. \textbf{774} (2021), 93--126. \MR{4250478}

\bibitem[Mat22]{mathur2020experiments}
\bysame, \emph{Experiments on the {B}rauer map in high codimension}, Algebra
  Number Theory \textbf{16} (2022), no.~3, 747--775. \MR{4449398}

\bibitem[MS23]{zbMATH07641743}
S.~Mathur and S.~Schr{\"o}er, \emph{The resolution property holds away from
  codimension three}, Trans. Am. Math. Soc. \textbf{376} (2023), no.~2,
  1041--1063.

\bibitem[Muk78]{MR498572}
S.~Mukai, \emph{Semi-homogeneous vector bundles on an {A}belian variety}, J.
  Math. Kyoto Univ. \textbf{18} (1978), no.~2, 239--272. \MR{498572}

\bibitem[Oda71]{MR318151}
T.~Oda, \emph{Vector bundles on an elliptic curve}, Nagoya Math. J. \textbf{43}
  (1971), 41--72. \MR{318151}

\bibitem[Ols07]{olssonboundedness}
M.~Olsson, \emph{A boundedness theorem for {H}om-stacks}, Math. Res. Lett.
  \textbf{14} (2007), no.~6, 1009--1021. \MR{2357471}

\bibitem[Ray70]{Raynaud}
M.~Raynaud, \emph{Faisceaux amples sur les sch\'{e}mas en groupes et les
  espaces homog\`enes}, Lecture Notes in Mathematics, Vol. 119,
  Springer-Verlag, Berlin-New York, 1970. \MR{0260758}

\bibitem[Ros61]{MR0130878}
M.~Rosenlicht, \emph{On quotient varieties and the affine embedding of certain
  homogeneous spaces}, Trans. Amer. Math. Soc. \textbf{101} (1961), 211--223.
  \MR{0130878 (24 \#A732)}

\bibitem[Ryd09]{rydh2009remarks}
D.~Rydh, \emph{Remarks on "{T}he resolution property for schemes and stacks" by
  {B}.~{T}otaro \cite{MR2108211}}, available at
  \url{https://people.kth.se/~dary}, 2009.

\bibitem[Ryd11]{MR2774654}
\bysame, \emph{\'{E}tale d\'evissage, descent and pushouts of stacks}, J.
  Algebra \textbf{331} (2011), 194--223. \MR{2774654}

\bibitem[Ryd15]{rydh-2009}
\bysame, \emph{Noetherian approximation of algebraic spaces and stacks}, J.
  Algebra \textbf{422} (2015), 105--147.

\bibitem[Ryd16]{rydh-2014}
\bysame, \emph{Approximation of sheaves on algebraic stacks}, Int. Math. Res.
  Not. \textbf{2016} (2016), no.~3, 717--737.

\bibitem[Ryd23]{rydh_noetherian-general}
\bysame, \emph{Absolute noetherian approximation of algebraic stacks},
  Preprint, Nov 2023, p.~16.

\bibitem[Sch99]{schroer-non-proj-surfaces}
S.~Schr\"{o}er, \emph{On non-projective normal surfaces}, Manuscripta Math.
  \textbf{100} (1999), no.~3, 317--321. \MR{1726231}

\bibitem[{Sch}17]{2012arXiv1206.2764S}
D.~{Sch{\"a}ppi}, \emph{{A characterization of categories of coherent sheaves
  of certain algebraic stacks}}, J. Pure Appl. Algebra (2017), to appear.

\bibitem[{Sta}]{stacks-project}
The {Stacks Project Authors}, \emph{{S}tacks {P}roject},
  \url{http://stacks.math.columbia.edu}.

\bibitem[SV04]{NormalSurfaceRes}
S.~Schr\"oer and G.~Vezzosi, \emph{Existence of vector bundles and global
  resolutions for singular surfaces}, Compos. Math. \textbf{140} (2004), no.~3,
  717--728. \MR{2041778}

\bibitem[Ton15]{MR3525845}
J.~Tong, \emph{Unipotent groups over a discrete valuation ring (after
  {D}olgachev-{W}eisfeiler)}, Autour des sch\'{e}mas en groupes. {V}ol. {III},
  Panor. Synth\`eses, vol.~47, Soc. Math. France, Paris, 2015, pp.~173--225.
  \MR{3525845}

\bibitem[Tot04]{MR2108211}
B.~Totaro, \emph{The resolution property for schemes and stacks}, J. Reine
  Angew. Math. \textbf{577} (2004), 1--22. \MR{2108211 (2005j:14002)}

\end{thebibliography}
\end{document}